\tikzset{labl/.style={anchor=south, rotate=270, inner sep=.5mm}}
\numberwithin{equation}{section}
\theoremstyle{plain}
\newcounter{intro}
\newtheorem{introthm}[intro]{Theorem}
\newtheorem{theorem}[equation]{Theorem}
\newtheorem{proposition}[equation]{Proposition}
\newtheorem{lemma}[equation]{Lemma} 
\newtheorem{corollary}[equation]{Corollary}
\theoremstyle{definition}
\newtheorem{example}[equation]{Example}
\newtheorem{chunk}[equation]{}
\theoremstyle{remark}
\newtheorem*{ack}{Acknowledgements}
\newcommand{\add}{\operatorname{add}}
\newcommand{\bfD}{\mathbf{D}}
\newcommand{\bfK}{\mathbf{K}}
\newcommand{\bfp}{\mathbf{p}}
\newcommand{\bfi}{\mathbf{i}}
\newcommand{\bfq}{\mathbf{q}}
\newcommand{\bft}{\mathbf{t}}
\newcommand{\bbF}{\mathbb F} 
\newcommand{\bbQ}{\mathbb Q} 
\newcommand{\bbZ}{\mathbb Z}
\newcommand{\cat}[1]{\mathcal{#1}}
\newcommand{\Ch}{\operatorname{Ch}} 
\newcommand{\Coker}{\operatorname{Coker}}
\newcommand{\colim}{\operatorname*{colim}}
\newcommand{\chu}{U}
\newcommand{\crep}{\operatorname{\mathcal{R}\!\!\;\mathit{ep}}}
\newcommand{\dcat}[1]{\mathbf{D}(\operatorname{Mod} #1)}
\newcommand{\enhK}{\mathcal{K}} 
\newcommand{\enhD}{\mathcal{D}} 
\newcommand{\End}{\operatorname{End}}
\newcommand{\eps}{\varepsilon}
\newcommand{\Ext}{\operatorname{Ext}}
\newcommand{\fibre}[2]{ {#1}_{k(#2)}}
\newcommand{\lfMod}{\operatorname{Mod}^{\mathrm{lf}}}
\newcommand{\Fun}{\operatorname{Fun}}
\newcommand{\fm}{\mathfrak{m}} 
\newcommand{\fp}{\mathfrak{p}}
\newcommand{\fq}{\mathfrak{q}}
\newcommand{\Hom}{\operatorname{Hom}}
\newcommand{\fHom}{\operatorname{\mathcal{H}\!\!\;\mathit{om}}}
\newcommand{\Ind}{\operatorname{Ind}} 
\newcommand{\Inj}{\operatorname{Inj}}
\newcommand{\gam}{\varGamma} 
\newcommand{\Ker}{\operatorname{Ker}}
\newcommand{\kos}[2]{{#1}/\!\!/{#2}} 
\newcommand{\lfK}{\mathbf{K}^{\mathrm{lf}}}
\newcommand{\KProj}[1]{\mathbf{K}(\Prj #1)}
\newcommand{\Loc}{\operatorname{Loc}}
\newcommand{\lotimes}{\otimes^{\mathrm L}}
\newcommand{\rmod}{\operatorname{mod}}
\newcommand{\Mod}{\operatorname{Mod}}
\newcommand{\one}{\mathds 1}
\newcommand{\op}{\mathrm{op}}
\newcommand{\Prj}{\operatorname{Proj}}
\newcommand{\prj}{\operatorname{proj}}
\newcommand{\rep}{\operatorname{rep}}
\newcommand{\Rep}{\operatorname{Rep}}
\newcommand{\RHom}{\operatorname{RHom}}
\newcommand{\Spec}{\operatorname{Spec}}
\newcommand{\Spc}{\operatorname{Spc}} 
\newcommand{\St}{\operatorname{St}}
\newcommand{\StlfMod}{\operatorname{StMod}^{\mathrm{lf}}}
\newcommand{\StMod}{\operatorname{StMod}}
\newcommand{\stmod}{\operatorname{stmod}}
\newcommand{\supp}{\operatorname{supp}}
\newcommand{\sslash}{\mathbin{/\mkern-6mu/}}
\newcommand{\Tor}{\operatorname{Tor}}
\newcommand{\Thick}{\operatorname{Thick}}
\newcommand{\lra}{\longrightarrow}
\newcommand{\xra}{\xrightarrow}
\newcommand{\iso}{\xrightarrow{\raisebox{-.4ex}[0ex][0ex]{$\scriptstyle{\sim}$}}}
\newcommand{\longiso}{\xrightarrow{\ \raisebox{-.4ex}[0ex][0ex]{$\scriptstyle{\sim}$}\ }}
\title{Lattices over finite group schemes and stratification}
\author[Barthel, Benson, Iyengar, Krause, and Pevtsova]{Tobias
  Barthel, Dave Benson, Srikanth  B. Iyengar, \\ Henning Krause, and Julia Pevtsova}
\address{Tobias Barthel\\
  Max Planck Institute for Mathematics\\
  Vivatsgasse 7  \\
  53111 Bonn\\
  Germany}
\address{Dave Benson \\ 
Institute of Mathematics\\ 
University of Aberdeen\\ 
King's College\\ 
Aberdeen AB24 3UE\\ 
Scotland U.K.}
\address{Srikanth B. Iyengar\\ 
Department of Mathematics\\
University of Utah\\ 
Salt Lake City, UT 84112\\ 
U.S.A.}
\address{Henning Krause\\ 
Fakult\"at f\"ur Mathematik\\ 
Universit\"at Bielefeld\\ 
33501 Bielefeld\\ 
Germany.}
\address{Julia Pevtsova\\ 
Department of Mathematics\\ 
University of Washington\\ 
Seattle, WA 98195\\ 
U.S.A.}
\begin{document}

\begin{abstract} 
This work concerns representations of a finite flat group scheme $G$, defined over a noetherian commutative ring $R$. The focus is on lattices, namely, finitely generated $G$-modules that are projective as $R$-modules, and on the full subcategory of all $G$-modules projective over $R$ generated by the lattices. The stable category of such $G$-modules is a rigidly-compactly generated, tensor triangulated category. The main result is that this stable category is stratified and costratified by the natural action of the cohomology ring of $G$. Applications include formulas for computing the support and cosupport of tensor products and the module of homomorphisms, and  a classification of the thick ideals in the stable category of lattices.
\end{abstract}

\keywords{Costratification, finite group, finite flat group scheme, integral
  representation, lattice, stable module category, stratification}

\subjclass[2020]{16G30 (primary); 18G80, 20C10, 20J06 (secondary)}

\date{\today}

\maketitle

\setcounter{tocdepth}{1}
\tableofcontents

\section{Introduction}

In recent years, there has been a surge of interest in the structure and properties of tensor triangulated categories arising in algebra, algebraic geometry, and topology. In the algebraic context the starting point is the groundbreaking contributions by Hopkins~\cite{Hopkins:1987a} and Neeman~\cite{Neeman:1992a,Neeman:2011a}. Since then, the subject has expanded into modular representation theory for finite groups, finite group schemes over fields, Lie algebras, rings of differential operators, and equivariant homotopy theory, among others. A central problem is the classification of thick ideals in essentially small tensor triangulated categories, along with the more challenging task of identifying localising and colocalising ideals in the associated big tensor triangulated categories. 

In this work we provide such classification for the big category of representations of finite groups and finite group schemes over commutative noetherian rings. The tensor triangulated category we introduce serves as a unifying framework for two contexts that have been studied extensively.  Specialised to the trivial group it is the derived category of a commutative noetherian ring, and the corresponding classification is due to Neeman. On the other end of the spectrum, when the coefficient ring $R$ is a field, one gets the  stable module category of a finite group or a finite group scheme. Here is our main theorem.

\begin{introthm}
[Theorems ~\ref{th:stratification}, \ref{th:costratification}]
\label{ithm:main}
For $G$ a finite flat group scheme over a commutative noetherian ring $R$, the tensor triangulated category $\Rep(G, R)$ is stratified and costratified by the canonical action of the cohomology ring $H^*(G,R)$. 
\end{introthm}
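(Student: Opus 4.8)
The assertion is that the rigidly-compactly generated tensor triangulated category $\Rep(G,R)$ is stratified and costratified by the canonical action of the graded-commutative ring $S \colonequals H^*(G,R)$. My plan is to apply the standard criterion of Benson--Iyengar--Krause, in its tensor-triangular formulation: for such a category, stratification is equivalent to the \emph{local-to-global principle} together with the \emph{minimality condition}, namely that for every homogeneous prime $\fp \in \Spec S$ the local category $\Gamma_\fp\Rep(G,R)$ is zero or has no proper nonzero localising tensor ideal; and costratification is equivalent to the local-to-global principle together with the corresponding \emph{cominimality} of the colocal categories $\Gamma^\fp\Rep(G,R)$. A prerequisite is that $S$ is noetherian, so that $\Spec S$ is a noetherian space lying over $\Spec R$ via the unit map $R \to H^0(G,R)$; this I would take from the Friedlander--Suslin finite-generation theorem over a field together with a base-change argument over the noetherian ring $R$.

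The engine of the proof is a reduction to the case where the base ring is a field, for which stratification of the stable module category of a finite flat group scheme is the theorem of Benson--Iyengar--Krause and Benson--Iyengar--Krause--Pevtsova, and costratification is likewise available. Given $\fp \in \Spec S$ with contraction $\fq \in \Spec R$, base change along $R \to k(\fq)$ produces a finite flat group scheme $G_{k(\fq)}$ over the field $k(\fq)$, a coproduct-preserving symmetric monoidal functor $\Rep(G,R) \lra \StMod(k(\fq)G_{k(\fq)})$ carrying lattices to dualisable objects, and a ring map $S \to H^*(G_{k(\fq)},k(\fq))$ that, after localising at $\fp$, identifies the relevant subspaces of prime spectra (the discrepancy being a nilpotent thickening, invisible to support theory). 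The crux is to show that $\Gamma_\fp\Rep(G,R)$ is equivalent, as a tensor triangulated category, to $\Gamma_{\fp'}\StMod(k(\fq)G_{k(\fq)})$ for the prime $\fp'$ matching $\fp$; I would establish this by factoring the passage to the residue field through $R_\fq$ and then its completion $\widehat{R_\fq}$, and verifying at each stage---through Rickard idempotent (Koszul object) computations---that the relevant local category is preserved. Minimality and cominimality upstairs then reduce to the field case. For the local-to-global principle I would argue by Zariski descent: $U \mapsto \Rep(G,U)$ is a sheaf of tensor triangulated categories over $\Spec R$, which reduces the problem to $R$ local, and there one completes and reassembles the global object from its $\Gamma_\fp$-pieces using the noetherianity of $S$ and the fiberwise local-to-global principle supplied by the field case.

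Putting the pieces together, local-to-global plus minimality gives stratification and local-to-global plus cominimality gives costratification; the classification of thick tensor ideals of the category of lattices and the support and cosupport formulas for $\otimes$ and $\fHom$ then follow from the general machinery. The step I expect to be the genuine obstacle is the base-change comparison of local categories across the non-flat morphism $R \to k(\fq)$: one must control how the local cohomology and completion functors on $\Rep(G,R)$ interact with restriction of scalars, and handle primes $\fq$ that are not closed in $\Spec R$---the ``generic fibre'' directions---on the same footing as the closed fibres; establishing the Gorenstein/Tate duality of $\Rep(G,R)$ that underlies the costratification half is a second point likely to require real work. A related technicality, easy to overlook but essential, is verifying that base change sends a generating set of dualisable objects of $\Rep(G,R)$ to such a set in $\StMod(k(\fq)G_{k(\fq)})$ and preserves rigidity, so that the abstract criteria genuinely apply to the target---this is where the flatness of $G$ over $R$ is used crucially.
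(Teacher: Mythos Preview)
Your overall architecture---reduce to fibres over $\Spec R$ and invoke stratification and costratification over fields---is exactly the paper's strategy. But two of the inputs you treat as routine are in fact the principal technical theorems of the paper, and your proposed sources for them do not work. First, noetherianity of $S=H^*(G,R)$ is \emph{not} obtainable from Friedlander--Suslin plus ``a base-change argument over $R$''; finite generation over a general noetherian base is a recent theorem of van der Kallen, and the paper relies on it essentially (both for noetherianity and for the torsion bound $d\cdot H^{\geqslant 1}(G,-)=0$). Second, the assertion that the comparison map $S\otimes_R k(\fq)\to H^*(G_{k(\fq)},k(\fq))$ is ``a nilpotent thickening, invisible to support theory'' is the content of Theorem~\ref{th:kappa}: this map is typically not surjective, and showing that it nonetheless induces a homeomorphism on $\Spec$ occupies all of Section~\ref{se:cohomology-and-fibres}, using derivation arguments on connecting maps and van der Kallen's torsion bound to force $p^n$-th powers into the image. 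Without this homeomorphism you cannot match primes of $S$ with primes of the fibre cohomology, and the reduction collapses.

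On the categorical side, the paper does not attempt your proposed equivalence $\gam_\fp\Rep(G,R)\simeq \gam_{\fp'}\StMod(k(\fq)G_{k(\fq)})$, nor does it pass through completion $\widehat{R_\fq}$. Instead it proves a fibrewise \emph{building} criterion (Theorem~\ref{th:lg-kproj-ha}): $X\in\Loc^\otimes(Y)$ in $\Rep(G,R)$ if and only if $X_{k(\fq)}\in\Loc^\otimes(Y_{k(\fq)})$ for every $\fq$. The key local step (Lemma~\ref{le:lg-local-case}) shows, via Koszul objects and the projection formula, that $\pi_r\pi(X)$ and $\gam_\fm X$ generate the same tensor ideal; no completion is needed. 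Combined with the support comparison of Lemma~\ref{le:support-fibre} and stratification over fields, this gives stratification directly. For costratification the paper does not establish Gorenstein/Tate duality or cominimality by hand; it invokes the bootstrap theorem of Barthel--Castellana--Heard--Sanders, which deduces costratification from stratification, fibrewise costratification over fields, and joint conservativity of the fibre functors (Proposition~\ref{pr:conserve}). Your proposed route through an explicit equivalence of local categories and direct cominimality would be substantially harder, if it works at all.
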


The category $\Rep(G, R)$ arises as a suitable ind-completion of the bounded derived category $\rep(G, R)$ of \emph{lattices}: representations of $G$ that are finitely generated and projective over $R$. We provide two complementary constructions of this category:  In Sections~\ref{se:Frobenius} and~\ref{se:groups}, we use Frobenius exact categories to give a homological model of $\Rep(G, R)$. In Section~\ref{se:enhancements} we study $\Rep(G, R)$ as the ind-completion in the $\infty$-categorical sense. The ring $H^*(G,R)$, which identifies with $\Ext^*_{RG}(R,R)$, is a graded-commutative noetherian $R$-algebra, by recent work of van der Kallen~\cite{vanderKallen:2023a}.

Group representations over rings are pervasive,  encompassing representations  over rings of integers of algebraic number fields; see for example ~\cite[Chapter~3]{Curtis/Reiner:1981a}. In this context the representations of interest are the torsion-free ones, that is to say, lattices, and these have become objects of study in their own  right~\cite{Reiner:1970a,Swan:1970a,Zimmermann:2022a}. Lattices appear already in the work of Brauer and Nesbitt~\cite{Brauer/Nesbitt:1941a} in the context of integral representations of finite groups  as an intermediary between  ordinary and modular representations.  While classification of indecomposable modules in such categories is usually out of reach, Theorem~\ref{ithm:main} provides structural information on the category $\Rep(G, R)$. Namely, the stratification by $H^*(G,R)$ that it gives   can be restated as a natural bijection between the nonzero localising  ideals of $\Rep(G,R)$, the nonzero colocalising ideals of $\Rep(G,R)$, and subsets of $\Spec H^*(G,R)$:
\[ 
\left\{
  \begin{gathered} \text{localising ideals} \\ \text{of  $\Rep(G,R)$}
\end{gathered}\, \right\} \;\stackrel{\sim}\longleftrightarrow\; 
\left\{\begin{gathered} \text{subsets of}\\ \text{$\Spec H^*(G,R)$} 
\end{gathered}\,  \right\}\,
\;\stackrel{\sim}\longleftrightarrow\; 
\left\{
\begin{gathered} \text{colocalising ideals}\\ \text{of $\Rep(G,R)$} 
\end{gathered}\,  \right \}\,.    
\]

The general results in \cite{Benson/Iyengar/Krause:2011a,Benson/Iyengar/Krause:2012b} allow us to deduce several other desirable consequences from Theorem~\ref{ithm:main}; see Section~\ref{se:costratification} for details.
These include the calculation of the Balmer spectrum \cite{Balmer:2005a} of $\rep(G,R)$:
\begin{equation}
\label{eq:balmer}
\Spc(\rep(G,R)) \longiso \Spec H^*(G,R)\,.
    \end{equation}

This isomorphism encodes a natural bijection between the nonzero thick ideals of $\rep(G,R)$ and specialisation closed subsets of $\Spec H^*(G,R)$.  This is the restriction of the bijection above for the big category $\Rep(G,R)$ to the small category $\rep(G,R)$. In view of the fact that classification of indecomposable lattices is generally a hopeless task, the parametrisation of thick tensor ideals offers a partial solution; it classifies $G$-lattices over $R$ up to homological operations: shifts, extensions, retracts, and tensoring.

\subsection*{Historical remarks and related works} 
As already mentioned, the case $G=1$ in 
Theorem~\ref{ithm:main}  yields the classification for the big 
derived category of $R$-modules and is due to Neeman \cite{Neeman:1992a}. This was preceded by the classification 
of thick subcategories for the bounded derived category of lattices over 
$R$ by Hopkins \cite{Hopkins:1987a}, following the work of  Devinatz, Hopkins, and Smith \cite{Devinatz/Hopkins/Smith:1988a} in stable 
homotopy theory. Studying representations 
of a finite group $G$ over a field $k$,  Benson,  Carlson, and  Rickard \cite{Benson/Carlson/Rickard:1997a} 
classified thick tensor ideals in $\stmod kG$, the small stable category 
of $kG$, which is closely related to $\rep(G,k)$ as we explain in Section~\ref{se:Frobenius}. Continuing with the small categories, the 
isomorphism \eqref{eq:balmer} for $G$ a finite group scheme, and $R = k$ 
still a field, was established in  \cite{Friedlander/Pevtsova:2007a}. 
The first result on classifying localising ideals in a big 
category is due to Benson, Iyengar, and Krause \cite{Benson/Iyengar/Krause:2011a}; 
their classification recovers the case $\Rep(G,k)$ of 
Theorem~\ref{ithm:main} where $k$ is a field and $G$ is a finite group. 
This was generalised by the last four authors to finite group schemes 
in \cite{Benson/Iyengar/Krause/Pevtsova:2018a}. 

Moving to more recent history, and in fact to the motivating development 
for this work, we arrive at a theorem of Lau~\cite{Lau:2023a} who established 
the calculation \eqref{eq:balmer} for $G$ a finite group and $R$ any 
commutative noetherian ring. Hence, Theorem~\ref{ithm:main} generalises 
Lau's work in two directions: from finite groups to finite group schemes, 
and from $\rep(G,R)$ to its ind-completion $\Rep(G,R)$.Keeping in line with earlier work on representations over a field, both of 
these passages are far from routine, and require new ideas and methods.

For a start, while the objects in $\rep(G,R)$ can be described in simple terms and have been studied for long, as far as we know the category  $\Rep(G,R)$ has not appeared in the literature before, and even the objects in it are mysterious, when $R$ is not a regular ring. Our first task is to establish basic structural properties of $\Rep (G,R)$, and the paragraph below outlines some of our key findings.

\subsection*{The category $\Rep(G,R)$} Throughout $G$ is a finite flat group scheme over a noetherian commutative ring $R$. The group algebra $RG$ is a cocommutative Hopf $R$-algebra  that is a finitely generated and projective as an $R$-module. We consider representations of $G$ over $R$ which can be identified with $RG$-modules. 

Let $\Mod(G,R)$ be the category of $RG$-modules that are projective as $R$-modules, and $\rmod(G,R)$ its full subcategory consisting of the finitely generated modules, in other words, lattices. We write $\bfK(\Prj G, R)$ for the subcategory of $\KProj{G}$ consisting of complexes that are $K$-projective when restricted to $R$. For our purposes the more pertinent object is a subcategory of $\bfK(\Prj G, R)$ whose objects satisfy a local finiteness condition; this is the category $\Rep(G,R)$. When $R$ is regular, $\Rep(G,R)$ coincides with $\bfK(\Prj G)$, and this case is discussed in~\cite{Barthel:strat-regular, Benson/Iyengar/Krause/Pevtsova:2022b}. However, these categories differ for general rings. In Section~\ref{se:example} we discuss an example that illustrates this, based on the representation theory  of $\bbZ/2\times \bbZ/2$. 

The key features of these homotopy categories and the associated stable categories are described in Sections~\ref{se:Frobenius}--\ref{se:groups}, including a recollement  relating them; see Corollary \ref{co:recollement}. The constructions apply more generally to any Frobenius $R$-algebra  and the results of Section~\ref{se:Frobenius} are stated in that greater generality.

By way of motivation, we state fundamental properties satisfied by $\Rep(G,R)$ that make it a suitable homotopy category to consider: 
\begin{enumerate}
    \item 
    $\Rep(G,R)$ is a rigidly-compactly generated symmetric tensor triangulated category with product $\otimes_R$, unit $R$, and $\End^*(R)\cong  H^*(G,R)$. 
    \item 
The compact objects in $\Rep(G,R)$ identify with
\[
\rep(G,R)=\bfD^b(\rmod(G, R))\,,
\]
the bounded derived category of lattices.
\end{enumerate}

Inspired by work in~\cite{Lau:2023a}, Barthel~\cite{Barthel:strat}  proposed a homotopy theoretic model for $\Rep(G,R)$, proved that  it satisfies the properties above, and that it is stratified by $H^*(G, R)$, when $R$ is a Dedekind domain.  Subsequently, this result was generalised to all regular rings $R$, from a homotopical \cite{Barthel:strat-regular} and a homological \cite{Benson/Iyengar/Krause/Pevtsova:2022b} perspective, but the methodology of neither work readily generalised to arbitrary noetherian coefficients. The current work provides a model that unifies all these frameworks.

In  Section~\ref{se:enhancements} we describe how one can enhance the structure on  $\Rep(G, R)$ so it becomes a symmetric monoidal stable $\infty$-category. This allows us to compare the model of this paper to the $\infty$-category of derived representations of a finite group $G$ studied in \cite{Barthel:strat,Barthel:strat-regular}. The latter is a natural candidate suggested by the $\infty$-category context and that extends well to other ring spectra; see~\cite{Barthel/Castellana/Heard/Naumann/Pol}. We prove that the two categories are equivalent in the strongest possible sense, thereby reconciling the homological with the homotopical approach; see Proposition~\ref{prp:repmodel}. This section serves as motivation and as a bridge from modular representation theory to higher algebra; the rest of the paper is independent of it. 

Next we describe other key ingredients that go into the proof of Theorem~\ref{ithm:main}.

\subsection*{Reduction to fibres} 
As noted above, $\Rep(G, R)$ is a rigidly-compactly generated symmetric tensor triangulated category. Heuristically, it can be viewed as a sheaf of categories over $\Spec R$ with fibres $\Rep(G_{k(\fp)},k(\fp))$, for $\fp$ ranging over the prime ideals of $R$ and $G_{k(\fp)}$ denoting the  group scheme  base-changed along $R\to k(\fp)$.  This perspective is crucial for Lau's approach to computing the Balmer spectrum of $\bfD^b(\rmod (G,R))$ for a finite group $G$, and the approach to stratification in \cite{Benson/Iyengar/Krause/Pevtsova:2022b,Gnedin/Iyengar/Krause:2022a}.  

Let $X_{k(\fp)}\colonequals X \otimes_R k(\fp)$ denote the base change of an object $X \in \Rep(G, R)$ along $R \to k(\fp)$. The results below illustrate how the global structure of $\Rep(G,R)$ is controlled by its fibres.

\begin{introthm}[Theorem ~\ref{th:lg-kproj-ha}]
\label{ithm:fiberwise}
Let $G$ be a finite flat group scheme over a noetherian commutative ring $R$. The following statements hold.
    \begin{enumerate}[\quad\rm(1)]
        \item (Detection) For any $X \in \Rep(G, R)$ one has
        \[
        X=0 \quad\iff \quad \text{$X_{k(\fp)} = 0$ for all $\fp \in \Spec R$.}
        \]
        \item (Building) The following conditions are equivalent for $X,Y \in \Rep(G, R)$:
            \begin{enumerate}[\quad\rm(a)]
                \item $X\in  \Loc^{\otimes}(Y)$ in $\Rep(G, R)$;
                \item $\fibre X\fp\in \Loc^{\otimes}(\fibre Y{\fp})$
                  in $\Rep(G_{k(\fp)}, k(\fp))$ for each $\fp\in \Spec
                  R$. 
            \end{enumerate} 
    \end{enumerate}
\end{introthm}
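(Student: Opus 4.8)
The plan is to reduce both statements to Neeman's classification of the localising subcategories of $\mathbf{D}(R)$, using the good behaviour of base change set up in the earlier sections. Write $b_\fp\colon \Rep(G,R)\to\Rep(G_{k(\fp)},k(\fp))$ for the base-change functor $X\mapsto X_{k(\fp)}$. It is exact, coproduct-preserving and symmetric monoidal, and it carries compact objects to compact objects, since on compacts it is the base change $\bfD^b(\rmod(G,R))\to\bfD^b(\rmod(G_{k(\fp)},k(\fp)))$. Hence $b_\fp$ admits a right adjoint $b_{\fp*}$ --- restriction of scalars along $R\to k(\fp)$, appropriately derived --- which is again exact and coproduct-preserving. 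I will use two formal properties of this adjunction: the projection formula $b_{\fp*}(b_\fp Y\otimes_{k(\fp)}W')\cong Y\otimes_R b_{\fp*}W'$, which for a base-change functor is elementary, and the identification $b_{\fp*}(\one)\cong\inf_R k(\fp)$, where $\inf_R\colon\mathbf{D}(R)\to\Rep(G,R)$ is the colimit-preserving symmetric monoidal ``inflation from the trivial group'' functor; note $\inf_R k(\fp)$ genuinely lies in $\Rep(G,R)$ because $k(\fp)$ is a filtered colimit of copies of $R/\fp$, hence of compacts. Taking $Y=X$ and $W'=\one$ in the projection formula gives $b_{\fp*}(X_{k(\fp)})\cong X\otimes_R\inf_R k(\fp)$ in $\Rep(G,R)$.

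The heart of the argument is the elementary observation that $R$ belongs to the localising subcategory of $\mathbf{D}(R)$ generated by the residue fields $k(\fp)$, $\fp\in\Spec R$: by Neeman's theorem this subcategory has support all of $\Spec R$, so it is $\mathbf{D}(R)$ itself. Applying the exact, coproduct-preserving functor $X\otimes_R\inf_R(-)\colon\mathbf{D}(R)\to\Rep(G,R)$, and using $X\otimes_R\inf_R(R)\cong X$ together with $X\otimes_R\inf_R k(\fp)\cong b_{\fp*}(X_{k(\fp)})$, therefore yields
\[
X\in\Loc_{\Rep(G,R)}\bigl(b_{\fp*}(X_{k(\fp)}):\fp\in\Spec R\bigr).
\]

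From here the theorem follows quickly. Statement~(1): if $X_{k(\fp)}=0$ for all $\fp$ then each $b_{\fp*}(X_{k(\fp)})=0$, so $X$ lies in the zero subcategory. For~(2), the implication (a)$\Rightarrow$(b) is formal, since the preimage under $b_\fp$ of $\Loc^{\otimes}(Y_{k(\fp)})$ is a localising tensor ideal (as $b_\fp$ is exact, coproduct-preserving and monoidal) that contains $Y$, hence contains $\Loc^{\otimes}(Y)$. For (b)$\Rightarrow$(a), assume $X_{k(\fp)}\in\Loc^{\otimes}(Y_{k(\fp)})$ for each $\fp$. Because $b_{\fp*}$ is exact and coproduct-preserving, the full subcategory $\{Z'\in\Rep(G_{k(\fp)},k(\fp)):b_{\fp*}Z'\in\Loc^{\otimes}(Y)\}$ is localising; by the projection formula it contains every $Y_{k(\fp)}\otimes_{k(\fp)}W'$, since $b_{\fp*}(Y_{k(\fp)}\otimes_{k(\fp)}W')\cong Y\otimes_R b_{\fp*}W'$ lies in the tensor ideal $\Loc^{\otimes}(Y)$; hence it contains $\Loc^{\otimes}(Y_{k(\fp)})$, and in particular $b_{\fp*}(X_{k(\fp)})\in\Loc^{\otimes}(Y)$ for every $\fp$. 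Combined with the displayed containment this gives $X\in\Loc^{\otimes}(Y)$.

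Thus the real content lies not in this tensor-triangular bookkeeping but in the structural inputs from the earlier sections: that $b_\fp$ is a well-defined exact, coproduct-preserving, strong symmetric monoidal functor $\Rep(G,R)\to\Rep(G_{k(\fp)},k(\fp))$ --- equivalently, that base change respects the local finiteness condition that cuts $\Rep(G,R)$ out of $\bfK(\Prj G,R)$ --- and that the resulting adjunction satisfies the projection formula and the computation $b_{\fp*}(\one)\cong\inf_R k(\fp)$ at the level of these homotopy categories, not merely on underlying complexes. Verifying that the projection formula is an isomorphism of objects of $\Rep(G,R)$ after the necessary replacements is the step I expect to require the most care; once that and the $\mathbf{D}(R)$-linear structure are in place, the only external ingredient needed is Neeman's classification over the noetherian ring $R$.
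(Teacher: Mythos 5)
Your argument is correct and takes a genuinely different route from the paper's. The paper proves the two parts separately: detection (Proposition~\ref{pr:conserve}) by pushing forward along $\bfp$ to $\dcat{G}$ and invoking a lemma of Neeman, then the flatness characterisation of Proposition~\ref{pr:flat}; and building (Theorem~\ref{th:lg-kproj-ha}) by first reducing to the local case via Lemma~\ref{le:reduction}, then proving $\Loc^{\otimes}(\pi_r\pi(X))=\Loc^{\otimes}(\gam_{\fm}X)$ (Lemma~\ref{le:lg-local-case}, which uses Koszul complexes, the torsion functor $\gam_{\fm}$ on $\Rep(G,R)$ and \cite[Proposition~2.11]{Benson/Iyengar/Krause:2011a}) and finally the BIK local-to-global principle (Theorem~\ref{th:local-to-global}) for the $H^*(G,R)$-action. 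You replace all of that with the single observation that $R\in\Loc_{\bfD(R)}\{k(\fp)\}$ and the $\bfD(R)$-linear structure of $\Rep(G,R)$ via the monoidal functor $\inf_R$; transporting along $X\otimes_R\inf_R(-)$ and the projection formula gives $X\in\Loc(b_{\fp*}(X_{k(\fp)}):\fp)$ at once, from which both detection and building drop out. This unifies the two statements and in principle avoids the local cohomology and support machinery over $\Spec H^*(G,R)$ (and hence, for this particular theorem, would not lean on van der Kallen's noetherianity result the way the paper's route through Theorem~\ref{th:local-to-global} does). The crux you flag --- that $b_\fp$ is a well-defined geometric tensor functor with a right adjoint $b_{\fp*}$ satisfying the projection formula, and that $b_{\fp*}(\one)\cong\inf_R k(\fp)$ at the level of these homotopy categories --- is exactly what the paper verifies in Lemma~\ref{le:lg-local-case}: the projection formula is cited from \cite{Balmer/DellAmbrogio/Sanders:2016a}, and the identification $\pi_r(\bfi_{G_k}k)\cong\bfi k$ (your $b_{\fp*}(\one)\cong\inf_R k(\fp)$) is extracted from the commuting square of adjunctions between $\Rep(G,R)$, $\dcat{G}$ and their base-changed versions. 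One small imprecision: $R/\fp$ is in general \emph{not} compact in $\bfD(R)$ (it is perfect only when $\fp$ is generated by a regular sequence), so the parenthetical justification that $\inf_R k(\fp)$ lands in $\Rep(G,R)$ via ``filtered colimit of copies of $R/\fp$'' is off; the correct reason is simply that $\inf_R$ is defined by ind-completion of the inflation $\bfD^{\mathrm{perf}}(R)\to\bfD^b(\rmod(G,R))$, so it is exact, coproduct-preserving, sends $R$ to $\one$, and hence has image in the localising subcategory generated by $\one$, which lies in $\Rep(G,R)$. With that repaired, the argument is sound.
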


As we explain in Section~\ref{se:fibrewisedetection}, `detection' holds for any Frobenius algebra; see Proposition~\ref{pr:conserve}. Part (2), given in Theorem~\ref{th:lg-kproj-ha} is a generalisation of \cite{Benson/Iyengar/Krause/Pevtsova:2022b} from regular to arbitrary noetherian coefficients; it immediately implies (1), but from a conceptual point of view  it is useful to separate these two conditions. 

Another crucial result is that the spectrum of the cohomology ring $H^*(G, R)$ is also controlled by the fibres.

\begin{introthm}[Theorem \ref{th:kappa}]
\label{ithm:fiberwise2}
For each $\fp$ in $\Spec R$ base change along $R \to k(\fp)$ induces a homeomorphism 
        \[
        \Spec H^*(G,k(\fp)) \longiso \Spec (H^*(G,R)\otimes_R k(\fp))\,.
        \]
\end{introthm}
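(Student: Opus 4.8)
The plan is to exhibit a natural homomorphism of graded $k(\fp)$-algebras
\[
\psi\colon H^*(G,R)\otimes_R k(\fp)\lra H^*(G,k(\fp))
\]
and to prove that it is an \emph{$F$-isomorphism}: its kernel is nilpotent and every homogeneous element of the target has a power — one may take a $p$-th power when $p\colonequals\operatorname{char}k(\fp)>0$ — lying in its image. A homomorphism of noetherian graded rings with these two properties induces a homeomorphism on $\Spec$: factor it as a surjection with nilpotent kernel (a bijective closed immersion) followed by an injective integral extension; the latter induces a surjective closed map on spectra which is moreover injective, because if two primes of the target contract to the same prime $\fq$ of the subring then a $p$-th power of any element of one already lies in $\fq$, hence in the other. (Since $p$-th power is a ring endomorphism of a graded-commutative $\mathbb F_p$-algebra, the hypothesis on homogeneous elements upgrades to: every element of the target has a $p^n$-th power in the image for $n\gg 0$, so this argument applies to non-homogeneous primes as well.) Here $H^*(G,R)$ is a noetherian $R$-algebra by van der Kallen~\cite{vanderKallen:2023a}, so both sides of $\psi$ are noetherian.

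To construct $\psi$, fix a resolution $P_\bullet\to R$ of the trivial module by finitely generated projective $RG$-modules. Since $RG$ is finitely generated projective over $R$, each $P_i$ is finitely generated projective over $R$, so the augmented complex $\cdots\to P_1\to P_0\to R\to 0$ is an exact complex of projective $R$-modules bounded on the right, hence contractible over $R$; it therefore stays exact after $-\otimes_R k(\fp)$, and $P_\bullet\otimes_R k(\fp)\to k(\fp)$ is a finitely generated projective resolution of the trivial $k(\fp)G_{k(\fp)}$-module. Applying $\Hom_{RG}(-,R)$ and using the extension-of-scalars adjunction together with finite projectivity of the $P_i$ gives an isomorphism of complexes $\Hom_{RG}(P_\bullet,R)\otimes_R k(\fp)\cong\Hom_{k(\fp)G_{k(\fp)}}(P_\bullet\otimes_R k(\fp),k(\fp))$, whence $H^*\!\bigl(\Hom_{RG}(P_\bullet,R)\otimes_R k(\fp)\bigr)\cong H^*(G,k(\fp))$. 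As $\Hom_{RG}(P_\bullet,R)$ is a bounded-below complex of projective $R$-modules with cohomology $H^*(G,R)$, the universal-coefficients spectral sequence
\[
E_2^{s,t}=\Tor^R_s\!\bigl(H^t(G,R),k(\fp)\bigr)\ \Longrightarrow\ H^{t-s}(G,k(\fp))
\]
has edge homomorphism $\psi$; equivalently, $\psi$ is obtained by taking $\pi_*$ of the equivalence of ring spectra $\RHom_{RG}(R,R)\lotimes_R k(\fp)\simeq\RHom_{k(\fp)G_{k(\fp)}}(k(\fp),k(\fp))$ furnished by the same argument.

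If the local rank $|G|$ of $RG$ at $\fp$ is invertible in $k(\fp)$, then a transfer argument along the trivial subgroup scheme gives $|G|\cdot H^{>0}(G,R)=0$, so $H^*(G,R)_\fp$ is concentrated in degree $0$, while $k(\fp)G_{k(\fp)}$ is semisimple (a finite group scheme over a field of characteristic prime to its order is linearly reductive), so $H^*(G,k(\fp))=k(\fp)$ and $\psi$ is an isomorphism. So assume $p$ divides $|G|$. The $F$-isomorphism property is then the point where van der Kallen's circle of ideas is essential: beyond the finite generation of $H^*(G,R)$, one needs a homogeneous system of parameters for the fibre cohomology rings that is \emph{defined over $R$} and compatible with base change. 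Concretely, there are homogeneous $\zeta_1,\dots,\zeta_r\in H^*(G,R)$ of positive even degree — for finite groups, powers of Chern classes of $R$-representations or classes built from the Evens norm; for infinitesimal group schemes, Friedlander–Suslin classes — such that for every $\fq\in\Spec R$ the elements $\psi(\zeta_j\otimes 1)\in H^*(G,k(\fq))$ generate a subalgebra over which $H^*(G,k(\fq))$ is module-finite. Specialising to $\fq=\fp$ shows $H^*(G,k(\fp))$ is integral over $\operatorname{im}\psi$; that every homogeneous element in fact has a $p$-th power in $\operatorname{im}\psi$ then follows because large Frobenius twists of cohomology classes over the residue field lift along base change to $H^*(G,R_\fp)$, so land in $\operatorname{im}\psi$.

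It remains to see that $\ker\psi$ is nilpotent, i.e.\ that the closed subset $\Spec(\operatorname{im}\psi)\subseteq\Spec\bigl(H^*(G,R)\otimes_R k(\fp)\bigr)$ is everything. From the spectral sequence above, $\ker\psi$ is the ideal generated by the images of the differentials landing in the edge term $E_2^{0,\ast}$, which all originate in subquotients of $\Tor^R_{\geq 2}(H^*(G,R),k(\fp))$; using multiplicativity of the spectral sequence and the fact that $\operatorname{im}\psi$ already contains the parameters $\psi(\zeta_j\otimes 1)$, one checks that this ideal lies in the nilradical. (Alternatively: through $\psi$, the ring $H^*(G,R)\otimes_R k(\fp)$ acts on $\Rep(G_{k(\fp)},k(\fp))$ with $\ker\psi$ the annihilator of the tensor unit, and nilpotence of $\ker\psi$ is the assertion that the unit has full support for this action, which one obtains from fibrewise detection, Proposition~\ref{pr:conserve}, applied over $R_\fp$.) The main obstacle is the construction invoked in the previous paragraph: choosing, uniformly over $\Spec R$ and compatibly with base change, a homogeneous system of parameters — together with its Frobenius lifts — for the fibre cohomology rings. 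This is the crux, and the place where the passage from finite groups (treated by Lau~\cite{Lau:2023a} via integrally defined Chern classes) to finite flat group schemes genuinely requires the machinery behind van der Kallen's finite-generation theorem.
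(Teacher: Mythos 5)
Your overall plan---show that $\psi\colon H^*(G,R)\otimes_R k(\fp)\to H^*(G,k(\fp))$ is an $F$-isomorphism and conclude a homeomorphism on $\Spec$---is the same as the paper's (compare \ref{ch:uhom}), and your construction of $\psi$ via finite projective resolutions and base change is fine. The problem is that the step you yourself flag as ``the crux'' is precisely where your argument stops: you assert the existence of homogeneous classes $\zeta_1,\dots,\zeta_r\in H^*(G,R)$ that simultaneously give a system of parameters for $H^*(G,k(\fq))$ for \emph{every} $\fq\in\Spec R$, together with ``Frobenius lifts'', and you claim this ``follows'' from van der Kallen. It does not. Van der Kallen's theorem (recalled in \ref{ch:vanderkallen}) gives two things: an integer $d$ with $d\cdot H^{\geqslant1}(G,-)=0$, and finite generation of $H^*(G,R)$ as an $R$-algebra (together with module-finiteness of $H^*(G,M)$ for $M$ finite over $R$). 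Neither statement produces a base-change-compatible homogeneous system of parameters for all fibres, and indeed no such uniform system is constructed or used anywhere in the paper. Your follow-up sentence---that ``large Frobenius twists of cohomology classes over the residue field lift along base change to $H^*(G,R_\fp)$''---is not an observation but is the content of the theorem you are trying to prove; integrality of $H^*(G,k(\fp))$ over $\operatorname{im}\psi$ does not, by itself, yield that Frobenius powers land in $\operatorname{im}\psi$ (consider an integral extension $k[x]\hookrightarrow k[x,y]/(y^2-x)$ in odd characteristic: integral, $p$-th powers of $y$ need not land in $k[x]$). Likewise, your transfer argument in the coprime case implicitly takes $d=|G|$, which is justified for finite groups but is exactly what one cannot assume for a general finite flat group scheme. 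The spectral-sequence argument for nilpotence of $\ker\psi$ is also left at the level of a gesture, and the ``alternative'' appeal to Proposition~\ref{pr:conserve} does not actually connect to nilpotence of the kernel of a map of graded rings.

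What the paper does instead avoids any uniform system of parameters. After reducing (via localisation and flatness) to the case where $R$ is local and $R\to k(\fp)$ is the quotient onto the residue field, it performs a \emph{noetherian induction on $\dim R$}: the base case $\dim R=0$ is a surjection with nilpotent kernel (Lemma~\ref{le:uhom-nilpotent}), and the inductive step passes to $R/\sqrt{(0)}$ and then to $R/tR$ for a non-zero-divisor $t$ (Lemma~\ref{le:uhom-nzd}). Each step verifies criterion \ref{ch:uhom} directly, using two tools: the connecting homomorphism $\delta$ of the long exact sequence in cohomology is a \emph{derivation}, so $\delta(x^{p^n})=p^n x^{p^n-1}\delta(x)$ vanishes because $p^n\cdot H^{\geqslant1}(G,-)=0$ (van der Kallen's torsion bound, $p$-locally); and in the regular-element case, noetherianness of $H^*(G,R)$ (van der Kallen's finite generation) is used to stabilise the ascending chain $J_i=\Ker(H^*(G,R)\xra{t^i}H^*(G,R/t))$, which is what lets one conclude $\delta([z]^{p^n})=0$. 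Universal bijectivity of $H^*(f)$ for an arbitrary $f\colon R\to R'$ is then reduced to the residue-field case by Lemma~\ref{le:ubij}, and for $f$ essentially of finite type one upgrades to a universal homeomorphism by observing that $H^*(f)$ is finite (van der Kallen's module-finiteness), hence universally closed. Theorem~\ref{th:kappa} is the special case $R'=k(\fp)$. So the two uses of van der Kallen's work in the actual proof---the torsion bound feeding the derivation trick, and noetherianness feeding the chain stabilisation---are quite different from, and more modest than, the uniform parameter system your outline requires. As it stands, your proposal leaves the essential step unproved.
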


The proof of this theorem occupies Section~\ref{se:cohomology-and-fibres}. The ideas that go into it are rather different from those involved in proving Theorem~\ref{ithm:fiberwise}. They build on earlier work of Benson and Habegger~\cite{Benson/Habegger:1987a}, and Lau~\cite{Lau:2023a} that establish the result for finite groups. The critical new inputs are recent results of van der Kallen~\cite{vanderKallen:2023a}, concerning torsion and finite generation of the cohomology ring $H^*(G,R)$ for a finite flat group scheme $G$ over $R$; see Section~\ref{ch:vanderkallen}. 

Theorems~\ref{ithm:fiberwise} and \ref{ithm:fiberwise2} allow us to employ the general theory of~\cite{Benson/Iyengar/Krause:2008a,Benson/Iyengar/Krause:2011a} for describing localising ideals in this context; we recall the relevant notions in Section~\ref{se:lch}.  One of the key insights of \cite{Benson/Iyengar/Krause/Pevtsova:2022b} is that stratification for regular rings may be checked fibrewise, thereby
reducing the problem to the case of field coefficients. After the preparation we have done so far, this now extends verbatim to arbitrary noetherian rings and finite group schemes, leading to the stratification part of Theorem~\ref{ithm:main}.  The costratification is then a formal
consequence, by combining the abstract bootstrap theorem of \cite{Barthel/Castellana/Heard/Sanders} with fibrewise detection; see Sections~\ref{se:stratification} and~\ref{se:costratification}. The results of van der Kallen mentioned above are again vital inputs in the arguments.

\begin{ack}
It is a pleasure to thank Greg Stevenson for helpful comments on a preliminary version of this paper. Part of this work was done during the Trimester Program ``Spectral Methods in Algebra, Geometry, and Topology" at the Hausdorff Institute in Bonn. It is a pleasure to thank HIM for hospitality and for funding by the Deutsche Forschungsgemeinschaft under Excellence Strategy EXC-2047/1-390685813.  The last four authors thank the American Institute for Mathematics for its hospitality and the support through the SQuaRE program. TB is supported by the European Research Council (ERC) under Horizon Europe (grant No.~101042990) and the Max Planck Institute for Mathematics, SBI was partly supported by NSF grant DMS-2001368, HK was partly supported by the Deutsche Forschungsgemeinschaft (SFB-TRR 358/1 2023 - 491392403), and JP was partly supported by NSF grants DMS-1901854, 2200832 and the  Brian and Tiffinie Pang faculty fellowship.
\end{ack}

\section{Frobenius algebras}
\label{se:Frobenius}
In this section we introduce the various module categories and homotopy categories of complexes that arise in our work. While our focus is on finite group schemes, some basic constructions and results apply more generally to Frobenius algebras. This is the content of this section. The methods build on those in \cite{Krause:2005a}; we take this work, and also  \cite{Krause:2022a}, as our standard references for the material below.

Throughout $R$ is a commutative ring (not necessarily noetherian) and $A$ a \emph{finite projective $R$-algebra}; that is to say, an $R$-algebra that is finitely generated and projective as an $R$-module. As usual $\Mod A$ stands for the category of all left $A$-modules, but we drop the term `left' in the sequel. We write $\Prj A$ for the full subcategory of projective $A$-modules. Our focus is on the subcategory of $\Mod A$ consisting of modules that are projective as $R$-modules; we denote it $\Mod(A,R)$. We write $\rmod(A,R)$ for the full subcategory of $\Mod(A,R)$ consisting of modules that are finitely generated over $R$.

\begin{lemma}
In $\Mod A$ the subcategory  $\Mod(A,R)$ is extension closed and so carries a natural exact structure, given by the short exact sequences of  $A$-modules.\qed
\end{lemma}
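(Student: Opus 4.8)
The plan is to verify the two assertions directly: that $\Mod(A,R)$ is closed under extensions inside $\Mod A$, and that the resulting class of admissible short exact sequences — namely those that are short exact in $\Mod A$ and have all three terms in $\Mod(A,R)$ — satisfies Quillen's axioms for an exact structure. For the first point, suppose $0 \to M' \to M \to M'' \to 0$ is a short exact sequence of $A$-modules with $M'$ and $M''$ in $\Mod(A,R)$. Restricting scalars along $R \to A$, this is a short exact sequence of $R$-modules. Since $M''$ is projective over $R$, the sequence splits as a sequence of $R$-modules, whence $M \cong M' \oplus M''$ as $R$-modules; a direct sum of two projective $R$-modules is projective, so $M$ lies in $\Mod(A,R)$. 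Thus $\Mod(A,R)$ is extension closed in $\Mod A$.

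For the exact structure, I would invoke the standard fact (see, e.g., the references \cite{Krause:2005a, Krause:2022a} already cited in the text) that an extension-closed full subcategory $\mathcal{E}$ of an abelian category $\mathcal{A}$ inherits a canonical exact structure, in which the admissible monics and epics are precisely the morphisms in $\mathcal{E}$ that are monic, resp. epic, in $\mathcal{A}$ with cokernel, resp. kernel, again in $\mathcal{E}$. The axioms — identities are admissible, admissible monics and epics are each closed under composition, pushouts of admissible monics along arbitrary maps in $\mathcal{E}$ exist and are admissible monics, and dually for epics — all follow from the corresponding exactness statements in $\mathcal{A} = \Mod A$ together with the extension-closedness just established (the pushout/pullback constructed in $\Mod A$ automatically has its remaining term in $\Mod(A,R)$ by the extension-closedness). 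This is entirely formal once extension-closedness is in hand.

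The only genuine content is therefore the splitting argument of the first paragraph, and there is essentially no obstacle: it is the observation that projectivity over $R$ of the quotient forces the extension to be $R$-split, which is exactly what keeps the middle term in the subcategory. I would keep the write-up to that one sentence plus a pointer to the general principle for the exact structure, which is why the statement is marked \qed in the source.
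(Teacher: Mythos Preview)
Your argument is correct and is exactly the standard reasoning the paper is implicitly relying on when it marks the lemma with \qed: the quotient being $R$-projective forces the sequence to split over $R$, so the middle term is $R$-projective, and the inherited exact structure is then the general fact about extension-closed subcategories of an abelian category. There is nothing to add.
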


Let $\Prj(\Mod(A,R))$ be the full subcategory of projective objects in $\Mod(A,R)$, and $\Inj(\Mod(A,R))$ the full subcategory of injective objects.  The duality functor is an equivalence:
\begin{equation}
\label{eq:duality}
D\colonequals\Hom_R(-,R)\colon  (\rmod(A,R))^\op\longiso\rmod(A^\op,R)\,.
\end{equation}
For all $M\in\rmod(A,R)$ and  $N\in\rmod(A^\op,R)$ one has
\[
\Hom_A(M,DN)\cong\Hom_{A^\op}(N,DM)\,.
\]
It follows that the exact category $\rmod(A,R)$ has enough projective
and enough injective objects. We set 
\[
\omega_{A/R}\coloneqq\Hom_R(A,R)\,;
\]
this is the \emph{dualising bimodule} of the $R$-algebra $A$.

For any $A$-module $M$ let $\add M$ denote the full subcategory of $\Mod A$ consisting of direct summands of finite direct sums of copies of $M$, and $\mathrm{Add}\, M$ denotes the corresponding subcategory where all direct sums are permitted. When $M$ is in $\Mod(A,R)$ so is every module in $\mathrm{Add}\, M$.

\begin{lemma}\label{le:proj-inj}
  The exact category $\Mod(A,R)$ has enough projective and enough
  injective objects, with
  \[
\Prj (\Mod(A,R))=\mathrm{Add}\, A = \Prj A\quad\text{and}\quad \Inj(\Mod(A,R)) = \mathrm{Add}\, \omega_{A/R}\,.
\]
\end{lemma}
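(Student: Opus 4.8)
The plan is to establish the two equalities separately, treating projectives first and then using the duality \eqref{eq:duality} together with the bimodule $\omega_{A/R}$ to transfer the result to injectives.

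First I would verify that every module in $\mathrm{Add}\, A$ is projective in the exact category $\Mod(A,R)$. Since $A$ itself is projective over $R$ (by hypothesis $A$ is a finite projective $R$-algebra), and projectivity over $R$ is closed under arbitrary direct sums and direct summands, every object of $\mathrm{Add}\, A$ lies in $\Mod(A,R)$; this was already noted in the paragraph preceding the lemma. To see such a module is projective for the exact structure, note that a conflation in $\Mod(A,R)$ is in particular a short exact sequence of $A$-modules that is $R$-split (being a sequence of projective $R$-modules, the surjection splits over $R$). Hence $\Hom_A(A,-)$ applied to a conflation is the original surjection of $A$-modules evaluated on global sections, which is surjective; so $A$, and therefore every summand of a direct sum of copies of $A$, is projective in $\Mod(A,R)$. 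The equality $\mathrm{Add}\, A = \Prj A$ is standard. Conversely, given any $M\in\Mod(A,R)$, choose a surjection $A^{(I)}\twoheadrightarrow M$ of $A$-modules; its kernel $K$ is an $A$-submodule of a projective $R$-module, hence projective over $R$ since $R$-projectivity of $A^{(I)}$ gives an $R$-splitting making $K$ a direct summand of $A^{(I)}$ as an $R$-module — wait, one must be slightly careful: $K$ is an $R$-submodule of the $R$-projective module $A^{(I)}$, and the quotient $M$ is $R$-projective, so the sequence is $R$-split and $K$ is an $R$-module direct summand of $A^{(I)}$, hence $R$-projective. Thus $K\in\Mod(A,R)$ and the surjection $A^{(I)}\to M$ is a deflation from a projective object, proving $\Mod(A,R)$ has enough projectives and that every projective object is a summand of some $A^{(I)}$, i.e. lies in $\mathrm{Add}\, A$.

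For the injective side, I would apply the duality \eqref{eq:duality} in the finitely generated setting and then pass to arbitrary direct sums. For $M\in\rmod(A,R)$, the module $DM=\Hom_R(M,R)$ lies in $\rmod(A^\op,R)$, and $D$ is an exact anti-equivalence sending conflations to conflations; hence it interchanges projectives and injectives in the finitely generated categories. Since $D A = \omega_{A/R}$, this shows $\omega_{A/R}$ is injective in $\rmod(A,R)$, and more generally $\add\,\omega_{A/R}=\Inj(\rmod(A,R))$; combined with the adjunction isomorphism $\Hom_A(M,DN)\cong\Hom_{A^\op}(N,DM)$ displayed above, one checks directly that $\Hom_A(-,\omega_{A/R})$ is exact on conflations in $\Mod(A,R)$, so $\omega_{A/R}$ is injective in the big category and hence so is every module in $\mathrm{Add}\,\omega_{A/R}$. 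Conversely, that $\Mod(A,R)$ has enough injectives and that every injective lies in $\mathrm{Add}\,\omega_{A/R}$ follows by dualising the cogeneration argument: embed $M\hookrightarrow \omega_{A/R}^{I}$ using that $\Hom_A(M,\omega_{A/R})=\Hom_R(M,R)$ separates points of $M$ when $M$ is $R$-projective, and argue that the cokernel is again $R$-projective because the embedding is $R$-split.

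The main obstacle I expect is the subtlety around direct sums versus products in the injective case: the natural cogenerator argument produces embeddings into \emph{products} $(\omega_{A/R})^I$, not coproducts, so one must argue that $\mathrm{Add}\,\omega_{A/R}$ — which uses coproducts — already suffices, or equivalently that every module in $\Mod(A,R)$ embeds into a direct sum of copies of $\omega_{A/R}$ with $R$-projective cokernel. This is where finiteness of $A$ over $R$ is used crucially: because $A$ is finitely generated projective over $R$, one has $\Hom_R(A^{(I)},R)\cong (\omega_{A/R})^{I}$ but also a suitable finiteness lets one reduce to the finitely generated case locally, or invoke the standard fact (as in \cite{Krause:2005a, Krause:2022a}) that for a finite projective $R$-algebra the additive closure of the single module $\omega_{A/R}$ captures all injectives of $\Mod(A,R)$. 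I would cite \cite{Krause:2005a} for this last point rather than reprove it, and otherwise the argument is a routine transcription of the classical Frobenius-algebra picture into the relative (over $R$) exact setting.
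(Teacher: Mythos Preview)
Your treatment of the projective side is fine and essentially matches the paper's argument (the paper phrases it via the adjoint pair $A\otimes_R-\dashv u$ with $u$ the forgetful functor, but the content is the same).

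The injective side, however, has a real gap that you yourself flag but do not resolve. Embedding $M$ into copies of $\omega_{A/R}$ via the evaluation map associated to $\Hom_A(M,\omega_{A/R})\cong\Hom_R(M,R)$ naturally lands in a \emph{product} $\prod_I\omega_{A/R}$, and for infinite $I$ this need not lie in $\Mod(A,R)$ at all (infinite products of projective $R$-modules are not projective in general), let alone in $\mathrm{Add}\,\omega_{A/R}$. Your attempted fix, the isomorphism ``$\Hom_R(A^{(I)},R)\cong(\omega_{A/R})^{I}$'', is either a product (correct but useless) or a coproduct (false), and the appeal to \cite{Krause:2005a} does not close the gap either.

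The paper's approach sidesteps this entirely by using the \emph{right} adjoint $u_\rho=\Hom_R(A,-)$ of the forgetful functor $u\colon\Mod(A,R)\to\Prj R$. For $M\in\Mod(A,R)$ the unit $\eta_M\colon M\to\Hom_R(A,M)$ is an admissible monomorphism (it is $R$-split, being the unit of an adjunction whose counit on $\Prj R$ splits), and $\Hom_R(A,M)$ is injective in $\Mod(A,R)$ since $u_\rho$ preserves injectivity. The crucial point is that $u_\rho$ preserves \emph{coproducts} because $A$ is finitely generated over $R$: writing $M$ as an $R$-summand of $R^{(I)}$, one gets $\Hom_R(A,M)$ as a summand of $\Hom_R(A,R^{(I)})\cong\omega_{A/R}^{(I)}$, so $\Hom_R(A,M)\in\mathrm{Add}\,\omega_{A/R}$ directly. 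This is the missing idea: rather than dualising $M$, apply $\Hom_R(A,-)$ to $M$.
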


\begin{proof}
  We view  $\Prj R$ as an exact  category; in it all objects are projective and injective.  The
  forgetful functor $u\colon\Mod(A,R)\to\Prj R$ is exact. Thus its
  left adjoint $u_\lambda=A\otimes_R-$ preserves projectivity, while
  its right adjoint $u_\rho=\Hom_R(A,-)$ preserves injectivity. For
  $M\in \Mod(A,R)$ the counit $\eps_M\colon u_\lambda u(M)\to M$
  provides an epimorphism such that $u_\lambda u(M)$ is projective,
  while the unit $\eta_M\colon M\to u_\rho u(M)$ provides a
  monomorphism such that $u_\rho u(M)$ is injective. The map
  $u(\eps_M)$ is a split epimorphism, while $u(\eta_M)$ is a
  split monomorphism. Thus $\eps_M$ and $\eta_M$ provide exact
  sequences in $\Mod(A,R)$.

  The equality $\Prj (\Mod(A,R))=\Prj A$ is clear since $u_\lambda$ maps the category of free $R$-modules onto the category of free
  $A$-modules.
  
Given that $u_\rho$ preserves injectives and that it also preserves all direct sums, since $A$ is finitely generated as an $R$-module, it follows that the objects in $\Inj(\Mod(A,R))$ are precisely the direct  summands of direct sums of copies of $\omega_{A/R}$.
\end{proof}

We obtain a pair of functors
$\Sigma,\Omega\colon \Mod(A,R)\to \Mod(A,R)$ by completing, for
any module $M$ in $\Mod(A,R)$, the morphisms $\eta_M$ and $\eps_M$ to
exact sequences
\begin{equation}
\label{eq:Sigma-Omega}
\begin{gathered}
  0\lra M\xra{\eta_M} u_\rho u(M)\lra \Sigma M\lra 0 \\
  0\lra \Omega M\lra u_\lambda u(M)\xra{\eps_M} M\lra 0\,.
 \end{gathered}
\end{equation}
Iterating this process yields an injective and projective resolution of $M$, which we denote $\bfi M$ and $\bfp M$, respectively.

\subsection*{Derived categories}
We write $\bfD(\cat A)$ to denote the derived category of an exact category $\cat{A}$; see \cite[Section~4.1]{Krause:2022a}. The main examples for us are the exact category $\Mod(A,R)$ and its subcategory $\rmod(A,R)$.

\begin{proposition}\label{pr:derived-exact}
The inclusion $\Mod(A,R)\subseteq \Mod A$ of exact categories induces a quotient functor
  \[
\begin{tikzcd}
  \bfD( \Mod(A,R))\ar[r,twoheadrightarrow]&\bfD(\Mod A)
\end{tikzcd}
  \]
with kernel given by the complexes in $\Mod(A,R)$ that are acyclic in $\Mod A$.
\end{proposition}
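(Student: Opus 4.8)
The plan is to factor the functor through homotopy categories and then reduce to the orthogonality calculus underlying Verdier quotients. Recall that for an exact category $\cat E$ one has $\bfD(\cat E)=\bfK(\cat E)/\bfK_{\ac}(\cat E)$, where $\bfK_{\ac}(\cat E)$ is the thick subcategory of complexes acyclic in $\cat E$; for $\cat E=\Mod(A,R)$ such a complex is one that is acyclic as a complex of $A$-modules \emph{and} whose cycle modules all lie in $\Mod(A,R)$. The inclusion $\Mod(A,R)\subseteq\Mod A$ is exact and carries conflations to short exact sequences, hence acyclic complexes to acyclic complexes, so it induces a triangulated functor $\Phi\colon\bfD(\Mod(A,R))\to\bfD(\Mod A)$. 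I would let $\cat K\subseteq\bfK(\Mod(A,R))$ be the full subcategory of complexes that are acyclic as complexes of $A$-modules. Since a complex of $A$-modules vanishes in $\bfD(\Mod A)$ precisely when it is acyclic, $\cat K$ is the kernel of $\bfK(\Mod(A,R))\hookrightarrow\bfK(\Mod A)\to\bfD(\Mod A)$; in particular it is a thick subcategory containing $\bfK_{\ac}(\Mod(A,R))$, so its image $\cat K/\bfK_{\ac}(\Mod(A,R))$ in $\bfD(\Mod(A,R))$ is exactly the subcategory named in the statement. Because $\bfD(\Mod(A,R))/\big(\cat K/\bfK_{\ac}(\Mod(A,R))\big)\simeq\bfK(\Mod(A,R))/\cat K$, it now suffices to show that the induced functor $\bfK(\Mod(A,R))/\cat K\to\bfD(\Mod A)$ is an equivalence.

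For this I would use the classical fact that every complex of $A$-modules admits a $K$-projective resolution $\tilde Y\to Y$ with $\tilde Y$ a complex of \emph{projective} $A$-modules; see \cite{Krause:2022a}. Since $A$ is projective over $R$, such a $\tilde Y$ lies in $\bfK(\Mod(A,R))$, which already gives essential surjectivity of $\Phi$. Moreover, for an arbitrary $X\in\bfK(\Mod(A,R))$ the cone of a $K$-projective resolution $\tilde X\to X$ is a complex of objects of $\Mod(A,R)$ acyclic over $A$, hence lies in $\cat K$; thus $\tilde X\to X$ becomes invertible in $\bfK(\Mod(A,R))/\cat K$, and every object of this quotient is isomorphic to the image of a $K$-projective complex. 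It then remains to check that the canonical map $\Hom_{\bfK(\Mod(A,R))/\cat K}(X,X')\to\Hom_{\bfD(\Mod A)}(X,X')$ is bijective when $X$ is $K$-projective. Every morphism $X\to C$ with $C\in\cat K$ is null-homotopic, since $C$ is acyclic over $A$ and $X$ is $K$-projective, so $\Hom_{\bfK(\Mod(A,R))}(X,C)\subseteq\Hom_{\bfK(\Mod A)}(X,C)=0$; hence $X$ lies in the left orthogonal of $\cat K$ and the localisation induces an isomorphism $\Hom_{\bfK(\Mod(A,R))}(X,X')\iso\Hom_{\bfK(\Mod(A,R))/\cat K}(X,X')$. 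Combining this with $\Hom_{\bfK(\Mod(A,R))}(X,X')=\Hom_{\bfK(\Mod A)}(X,X')$ (fullness of $\Mod(A,R)\subseteq\Mod A$) and $\Hom_{\bfK(\Mod A)}(X,X')=\Hom_{\bfD(\Mod A)}(X,X')$ ($K$-projectivity of $X$) yields the bijection, and one checks it is the map induced by $\Phi$.

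I expect the real content to sit in one observation rather than a hard step: the $K$-projective resolutions furnished by the general machinery automatically lie in $\bfK(\Mod(A,R))$, which is where --- and the only place where --- the hypothesis that $A$ is finitely generated projective over $R$ enters, via $\Prj A\subseteq\Mod(A,R)$ from Lemma~\ref{le:proj-inj}. Everything else is the routine calculus-of-fractions bookkeeping for Verdier quotients. The one point that merits care is set-theoretic: $\Mod(A,R)$ need not be essentially small, so one should stay within the size conventions for derived categories of exact categories adopted in \cite{Krause:2022a}, but no genuine new difficulty arises there.
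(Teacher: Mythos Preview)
Your proof is correct, and the underlying idea coincides with the paper's, but the packaging differs. The paper argues in one stroke inside $\bfD(\Mod(A,R))$: the localising subcategory generated by $A$ has right orthogonal exactly the complexes acyclic in $\Mod A$ (since $\Hom(\Sigma^n A,-)$ computes cohomology), and Bousfield localisation then yields a right adjoint to the inclusion of $\Loc(A)$ that is a quotient functor with the stated kernel; this adjoint is the functor to $\bfD(\Mod A)$. You instead unpack the same orthogonality at the homotopy-category level, using $K$-projective resolutions by complexes of projective $A$-modules and the calculus of fractions for Verdier quotients. Your route is more elementary and makes the single substantive point---that $\Prj A\subseteq\Mod(A,R)$---completely explicit, whereas the paper absorbs it into the identification $\Loc(A)\simeq\bfD(\Mod A)$; the paper's route is shorter and avoids the bookkeeping with $\cat K$ versus $\bfK_{\ac}(\Mod(A,R))$.
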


\begin{proof}
Consider  the localising subcategory of $\bfD(\Mod(A,R))$ generated by $A$. Its right orthogonal in $\bfD(\Mod(A,R))$ is the full subcategory of complexes that are acyclic in $\Mod A$. It follows that the right adjoint of the inclusion is a quotient functor that identifies with the functor $\bfD(\Mod(A,R))\to\bfD(\Mod A)$ induced by the inclusion $\Mod(A,R)\to \Mod A$.
\end{proof}

In what follows, by a \emph{perfect} complex over a ring $\Lambda$ we mean a complex that is quasi-isomorphic to a bounded complex of finitely generated projective $\Lambda$-modules.

\begin{proposition}
\label{pr:cofinal}
The inclusions $\rmod(A,R)\subseteq \Mod(A,R)\subseteq \Mod A$ of exact categories induces functors 
\[
\bfD^b(\rmod(A,R))\lra\bfD(\Mod(A,R)) \lra\bfD(\Mod A)
\]
where the first one and the composite are fully faithful, identifying $\bfD^b(\rmod(A,R))$ with the subcategory of complexes in $\bfD(\Mod A)$ that are perfect over $R$. 
\end{proposition}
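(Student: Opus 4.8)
The plan is to compute morphisms in all three categories through the homotopy category $\bfK(\Mod A)$ by means of resolutions by projective $A$-modules. First, by Lemma~\ref{le:proj-inj}, together with its evident finitely generated counterpart obtained from the duality \eqref{eq:duality}, the exact category $\Mod(A,R)$ has enough projectives, namely $\Prj A$, and $\rmod(A,R)$ has enough projectives, namely the category $\prj A$ of finitely generated projective $A$-modules; moreover $\prj A\subseteq\Prj A$ and both consist of modules projective over $R$. Invoking the standard presentation of the bounded derived category of an exact category with enough projectives \cite[Section~4.1]{Krause:2022a}, I would fix triangle equivalences $\bfK^{-,b}(\prj A)\simeq\bfD^b(\rmod(A,R))$ and $\bfK^{-,b}(\Prj A)\simeq\bfD^b(\Mod(A,R))$, where $\bfK^{-,b}$ denotes the bounded-above complexes with bounded cohomology. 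Under these, the first functor in the statement is modelled by $\bfK^{-,b}(\prj A)\hookrightarrow\bfK^{-,b}(\Prj A)\to\bfD(\Mod(A,R))$ and the composite by $\bfK^{-,b}(\prj A)\to\bfD(\Mod A)$.

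For full faithfulness I would use that a bounded-above complex of projective objects in any exact category is $K$-projective: a chain map from it into an acyclic complex is null-homotopic, the contracting homotopy being built from the top downward because projectives lift along admissible epimorphisms. Applied with the exact structures of $\Mod A$ and of $\Mod(A,R)$ (whose projective objects are $\Prj A$), this shows that $\bfK^{-,b}(\Prj A)\to\bfD(\Mod(A,R))$ and $\bfK^{-,b}(\prj A)\to\bfD(\Mod A)$ are fully faithful; since $\bfK^{-,b}(\prj A)$ is a full subcategory of $\bfK^{-,b}(\Prj A)$, it follows that $\bfD^b(\rmod(A,R))\to\bfD(\Mod(A,R))$ and the composite $\bfD^b(\rmod(A,R))\to\bfD(\Mod A)$ are both fully faithful.

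To identify the essential image of the composite, I would note that restriction of scalars along $R\to A$ is exact and induces $\rho\colon\bfD(\Mod A)\to\bfD(\Mod R)$, and that a complex is perfect over $R$ precisely when it lies in $\rho^{-1}(\operatorname{perf} R)$, where $\operatorname{perf} R\subseteq\bfD(\Mod R)$ is the thick subcategory of perfect complexes; hence the $R$-perfect complexes form a thick subcategory of $\bfD(\Mod A)$. It contains every lattice — a finitely generated projective $R$-module in a single degree — and therefore contains the thick subcategory generated by the lattices, which is exactly the essential image: the functor is fully faithful, $\rmod(A,R)$ generates $\bfD^b(\rmod(A,R))$ as a thick subcategory, and $\bfD^b(\rmod(A,R))$ is idempotent-complete because $\rmod(A,R)$ is. For the reverse inclusion, given $X\in\bfD(\Mod A)$ perfect over $R$, the cohomology of $X$ is bounded and finitely generated over $A$, so $X$ admits a resolution $P\to X$ by a bounded-above complex of finitely generated projective $A$-modules; restricting along $R\to A$ turns $P$ into a bounded-above resolution of the perfect complex $\rho(X)$ by finitely generated projective $R$-modules. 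For $n\ll 0$ the module $\Coker(P^{n-1}\to P^n)$ is finitely generated projective over $R$: after restriction it is a perfect $R$-module, being the cone of the map of perfect $R$-complexes $\sigma_{\ge n+1}\tau_{\ge n}\rho(P)\to\tau_{\ge n}\rho(P)\simeq\rho(X)$, a perfect module over any ring has finite projective dimension, and each $\Coker(P^{n-1}\to P^n)$ is a first syzygy of $\Coker(P^{n}\to P^{n+1})$, so these modules become finitely generated projective over $R$ once $n$ is small enough. Then $\tau_{\ge n}P$ is a bounded complex whose terms are finitely generated over $A$ and projective over $R$ — a bounded complex of lattices — and the canonical quasi-isomorphism $P\to\tau_{\ge n}P$ identifies $X$ with $\tau_{\ge n}P$ in $\bfD(\Mod A)$, so $X$ lies in the essential image.

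The step I expect to be substantial is this last inclusion, and within it the assertion that a sufficiently deep syzygy of an $R$-perfect complex, formed with respect to a resolution by finitely generated projectives, is again finitely generated projective over $R$; this rests on a perfect module having finite projective dimension, which is elementary over any ring. The remaining subtlety is that when $R$ is not noetherian the cohomology of an $R$-perfect complex of $A$-modules need not be finitely generated over $A$, so the passage to a finitely generated projective $A$-resolution must be argued with more care; everything else in the proof is formal, relying only on $K$-projective resolutions in exact categories with enough projectives.
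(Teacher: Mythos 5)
Your proof is correct in its essentials and reaches the same core computation as the paper's, but you take a genuinely different route to full faithfulness. The paper establishes both fully faithful statements by invoking \emph{cofinality} of $\rmod(A,R)$ in $\Mod A$ (and a fortiori in $\Mod(A,R)$), citing \cite[Proposition~4.2.15]{Krause:2022a}; you instead model $\bfD^b(\rmod(A,R))$ as $\bfK^{-,b}(\prj A)$ and observe that a bounded-above complex of projective objects is $K$-projective in each of the ambient exact structures, so morphisms into $\bfD(\Mod(A,R))$ and $\bfD(\Mod A)$ are already computed in the homotopy category. Both arguments are valid; yours is more self-contained and makes the mechanism visible, while the cofinality criterion is quicker once the reference is in hand. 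For the hard inclusion (an $R$-perfect complex lies in the essential image), your argument is the same as the paper's: resolve by a bounded-above complex $P$ of finitely generated projective $A$-modules, note that $\Coker(P^{n-1}\to P^n)$ for $n$ below the range of cohomology is a module which is perfect over $R$ (being a cone of two $R$-perfect complexes), hence of finite projective dimension over $R$, so a deeper syzygy is $R$-projective, and then the smart truncation is a bounded complex of lattices quasi-isomorphic to $X$.

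Two small remarks. First, the detour through the thick subcategory generated by lattices and idempotent completeness of $\bfD^b(\rmod(A,R))$ is unnecessary and also slightly imprecise: every object of $\bfD^b(\rmod(A,R))$ \emph{is} a bounded complex over $\rmod(A,R)$ up to isomorphism, so its restriction to $R$ is already a bounded complex of finitely generated projectives, giving the easy inclusion directly without any closure under retracts. (The essential image is a priori only the triangulated subcategory generated by the lattices, not its thick closure, but this does not affect the containment you need.) Second, you correctly flag that over non-noetherian $R$ the cohomology of an $R$-perfect complex need not be finitely generated, so the initial passage to a resolution by finitely generated projective $A$-modules requires care; the paper's proof glosses over exactly this point. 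The fix is to build $P$ step by step, using only that the \emph{top} nonvanishing cohomology group of a pseudo-coherent $R$-complex is finitely generated over $R$, hence over $A$; after surjecting a finitely generated free $A$-module onto it, the cone remains $R$-perfect, so the induction continues. With that repair the argument goes through in the generality stated, and your instinct to isolate this as the delicate step was sound.
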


\begin{proof} 
The composite of functors in question and also the first one are fully faithful, because $\rmod(A,R)$ is
\emph{cofinal} in $\Mod A$. This means for any exact sequence 
\[
0\lra X\lra Y\lra Z\lra 0\qquad \text{with}\qquad Z\in\rmod(A,R)
\]
there is an epimorphism $A\otimes_R Z\to Z$ in $\rmod(A,R)$ that factors through $Y\to Z$; see \cite[Proposition~4.2.15]{Krause:2022a}.  It is clear that the essential image consists of complexes in $\bfD(\Mod A)$ that are perfect over $R$. As to the converse claim, let $X$ be a complex of $A$-modules that is perfect over $R$. We claim that $X$ is quasi-isomorphic to a bounded complex consisting of $A$-modules that are finite and projective over $R$; this would complete the argument for the latter is evidently in the image of $\bfD^b(\rmod(A,R))$.

Indeed, since $X$ is perfect over $R$ its homology modules $H_i(X)$ are finitely generated as $R$-modules, and so also as $A$-modules, and zero for $|i|\gg 0$. Without loss of generality we can assume $H_i(X)=0$ for $i<0$. Thus $X$ has a resolution $P$ over $A$ where each $P_i$ is finitely generated projective and zero for $i<0$. For each integer $n$ set $\Omega_n \colonequals \Coker(P_{n+1}\to P_n)$. Then $X$ is quasi-isomorphic to the complex 
\[
X(n)\colon \quad \cdots\lra 0 \lra \Omega_{n} \lra P_{n-1}\lra \cdots\lra P_0\lra 0 \lra\cdots\
\] 
for each $n\ge s\colonequals\max\{i\mid H_i(X)\ne0\}$. 
The $A$-module $\Omega_s$ is perfect over $R$ and let $p$ denote its projective dimension. Then the module $\Omega_{s+p}$ is projective over $R$, and therefore the complex $X(s+p)$ is quasi-isomorphic to $X$ and in $\rmod(A,R)$.
\end{proof}

The following example has been suggested by Greg Stevenson in order to illustrates the difference between the derived categories of $\Mod(A,R)$ and $\Mod A$.

\begin{example}
    Let $k$ be a field and choose $A\colonequals k[t]/(t^2)\equalscolon R$. Then $\bfD(\Mod(A,R))$ equals the homotopy category $\bfK(\Prj A)$ of complexes of projective $A$-modules, and the kernel of the canonical functor $\bfD(\Mod(A,R))\to\bfD(\Mod A)$ identifies with the stable module category of $A$. In particular, a complex is in the kernel if and only if it is a direct sum of copies of the complex
\[
\cdots\xra{\ t\ }A\xra{\ t\ }A\xra{\ t\ }A\xra{\ t\ }\cdots\,.
\]\end{example}

\subsection*{Frobenius algebras}
We are interested in the case when the exact category $\Mod(A,R)$ is \emph{Frobenius}, that is to say, it has enough projective and enough injective objects, and these coincide.

\begin{lemma}
\label{le:Frobenius}
Let $R$ be a commutative ring and $A$ a finite projective $R$-algebra. The following conditions are equivalent.
  \begin{enumerate}[\quad\rm(1)]
\item There is an equality $\add \omega_{A/R}=\add A$.
\item The exact category  $\rmod(A,R)$ is a Frobenius category. 
\item The exact category  $\Mod(A,R)$ is a Frobenius category.
\end{enumerate}
\end{lemma}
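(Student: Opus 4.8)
The plan is to reduce all three conditions to the single equality $\add A=\add\omega_{A/R}$, using the explicit descriptions of projective and injective objects obtained in Lemma~\ref{le:proj-inj} together with a standard reduction from arbitrary to finite coproducts. The key elementary observation is: a finitely generated module which is a direct summand of an arbitrary coproduct $\bigoplus_{i\in I}N$ is already a direct summand of a finite subcoproduct $N^{n}$. Indeed, if $\iota\colon M\to\bigoplus_{i\in I}N$ is a split monomorphism with retraction $\pi$, then finite generation of $M$ forces $\iota(M)$ to lie in a finite subcoproduct $\bigoplus_{i\in F}N$; writing $\iota=j\iota'$ with $j$ the inclusion of this subcoproduct, the map $\pi j$ retracts $\iota'$, so $M\in\add N$. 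Consequently $\mathrm{Add}\,A=\mathrm{Add}\,\omega_{A/R}$ is equivalent to $\add A=\add\omega_{A/R}$, once one notes that both $A$ and $\omega_{A/R}=\Hom_R(A,R)$ are finitely generated as $A$-modules: the former is cyclic, and the latter is finitely generated over $R$ since $A$ is, hence finitely generated over $A$. This is the only place where the hypothesis that $A$ is a finite projective $R$-algebra enters, and it is the step I would be most careful with, though it is entirely routine.

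Granting this, the equivalence of (1) and (3) is immediate: by Lemma~\ref{le:proj-inj} the exact category $\Mod(A,R)$ always has enough projectives and enough injectives, with $\Prj(\Mod(A,R))=\mathrm{Add}\,A$ and $\Inj(\Mod(A,R))=\mathrm{Add}\,\omega_{A/R}$, so $\Mod(A,R)$ is Frobenius exactly when $\mathrm{Add}\,A=\mathrm{Add}\,\omega_{A/R}$, which by the previous paragraph is condition (1).

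For the equivalence of (1) and (2) I would record the analogue of Lemma~\ref{le:proj-inj} for the small category, namely that the projective objects of $\rmod(A,R)$ are exactly $\add A$ and the injective objects are exactly $\add\omega_{A/R}$. This is just the restriction of the argument proving Lemma~\ref{le:proj-inj}: the forgetful functor $u\colon\rmod(A,R)\to\prj R$ is exact, and for $M\in\rmod(A,R)$ the module $u(M)$ is a finitely generated projective $R$-module, hence $u_\lambda u(M)=A\otimes_R u(M)$ lies in $\add A$ and $u_\rho u(M)=\Hom_R(A,u(M))$ lies in $\add\omega_{A/R}$; the counit and unit supply an admissible epimorphism with source in $\add A$ and an admissible monomorphism with target in $\add\omega_{A/R}$, so (using that enough projectives and enough injectives were already established before the statement) these classes are precisely $\add A$ and $\add\omega_{A/R}$. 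Therefore $\rmod(A,R)$ is Frobenius if and only if $\add A=\add\omega_{A/R}$, i.e. condition (1). This proves $(1)\Leftrightarrow(2)\Leftrightarrow(3)$, and I expect no obstacle beyond the finite-versus-infinite coproduct bookkeeping noted above.
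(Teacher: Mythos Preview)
Your proof is correct and follows the same approach as the paper's: identify the projectives and injectives in $\rmod(A,R)$ as $\add A$ and $\add\omega_{A/R}$ respectively (which the paper states just before Lemma~\ref{le:proj-inj} and treats as clear in the proof), and invoke Lemma~\ref{le:proj-inj} for $\Mod(A,R)$. The paper leaves implicit the passage between $\add$ and $\mathrm{Add}$ that you spell out carefully; your finite-subcoproduct argument is the standard one and is exactly what is needed to bridge (1) and (3).
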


\begin{proof}
The equivalence (1) $\Leftrightarrow$ (2) is clear since $\add \omega_{A/R}$ equals the category of injective objects in
$\rmod(A,R)$, while $\add A$ equals the category of projectives.

(1) $\Leftrightarrow$ (3) follows from Lemma~\ref{le:proj-inj}.
\end{proof}

A ring $A$ is a \emph{Frobenius $R$-algebra} if it is a finite projective $R$-algebra  satisfying the conditions in the result above; see the work of Scheja and Storch~\cite[\S14]{Scheja/Storch:1974a}, where this notion is introduced via condition (1).  When $R$ is noetherian, this condition can be detected in terms of the fibres of $A$ over $R$; see Lemma~\ref{le:frobenius-self-injective}. Evidently $A$ is a Frobenius $R$-algebra if and only if so is $A^\op$, because of the duality \eqref{eq:duality}. 

\subsection*{The stable module category}

For any Frobenius category $\cat A$ we write $\St\cat A$ for the
\emph{stable category}, obtained from $\cat A$ by annihilating
the projective and injective objects; this carries a natural
triangulated structure~\cite[Section~3.3]{Krause:2022a}. We denote $\Sigma$ the suspension in
$\St\cat A$, which for any $M\in\cat A$ is given by an exact sequence
\[
0\lra M\lra Q\lra \Sigma M\lra 0
\] 
such that $Q$ is an injective, and so also a projective, object. For $M,N$ in $\cat A$ we set
\[
\underline{\Hom}_{\cat A}(M,N)\colonequals \Hom_{\cat A}(M,N)/\mathrm{PHom}_{\cat A}(M,N)\,,
\]
where $\mathrm{PHom}_{\cat A}(M,N)$ denotes the subgroup of morphisms $M\to N$ that
factor through a projective object.

Let $A$ be a Frobenius $R$-algebra.  We set
\[
\stmod(A,R)\colonequals\St(\rmod(A,R))\quad\text{and}\quad
  \StMod(A,R)\colonequals\St(\Mod(A,R))\,,
  \]
and consider these categories with their natural triangulated structure.

\subsection*{The homotopy category of projectives}
Let $\Lambda$ be a ring and $\bfK(\Prj \Lambda)$ its homotopy
category of (unbounded) complexes of projective $\Lambda$-modules. Consider the canonical functor $\bfq\colon \bfK(\Prj \Lambda)\to\bfD (\Mod \Lambda)$ which inverts the quasi-isomorphisms. Taking projective resolutions yields a left adjoint 
adjoint $X\mapsto \bfp X$ that identifies the derived category $\bfD (\Mod \Lambda)$ with the full subcategory of \emph{$K$-projective
  complexes} in $\bfK(\Prj \Lambda)$.

Let $A$ be a Frobenius $R$-algebra. We write $\bfK(\Prj A,R)$ for the full subcategory of complexes in $\bfK(\Prj A)$ that are $K$-projective when restricted to $R$; this is a localising subcategory. Let $\mathbf{Ac}(\Prj A,R)$ denote the full subcategory of complexes in $\bfK(\Prj A,R)$ that are acyclic; this is again a localising subcategory. The term `acyclic' refers to the exact structure of the ambient exact category, which is either $\Mod A$ or $\Mod(A,R)$. In both cases we obtain the same class of acyclic complexes. It is clear that a complex is acyclic in $\Mod A$ if it is acyclic in $\Mod(A,R)$. On the other hand, if a complex is acyclic in $\Mod A$ and also $K$-projective over $R$, then it is contractible over $R$ and therefore acyclic in $\Mod(A,R)$.

Recall that each $M$ in $\Mod(A,R)$ has an injective and a projective resolution, denoted $\bfi M$ and $\bfp M$; see \eqref{eq:Sigma-Omega}. A \emph{complete resolution} of an $M$  in $\Mod(A,R)$ is an acyclic complex $\bft M$ of projective $A$-modules with $ \Coker(d^{-1}_{\bft M})=M$; here, $d^{-1}_{\bft M}$ denotes the $(-1)$st differential in the complex $\bft M$. Then $\bfi M$ and $\bft M$ are $K$-projective over $R$, since $A$ is a Frobenius $R$-algebra; the complex $\bfp M$ is always $K$-projective over $R$.

\begin{lemma}
\label{le:KProj-ac}
When $A$ is a Frobenius $R$-algebra the assignment $X\mapsto \Coker(d^{-1}_X)$ induces an $R$-linear triangle equivalence
\[
\mathbf{Ac}(\Prj A,R)\longiso\StMod(A,R)\,.
\]
\end{lemma}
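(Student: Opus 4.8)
The plan is to adapt the classical argument that identifies the stable module category of a Frobenius ring with the homotopy category of acyclic complexes of projectives, taking care throughout that all constructions respect the exact structure of $\Mod(A,R)$ rather than that of $\Mod A$. The functor in one direction sends an acyclic complex $X$ of projective $A$-modules to $\Coker(d^{-1}_X)$; I first need to check this is well-defined. Since $X$ is acyclic (in either exact structure, which by the discussion preceding the lemma amounts to the same thing because $X$ is $K$-projective over $R$ and hence contractible over $R$), the module $Z \coloneqq \Coker(d^{-1}_X)$ sits in a short exact sequence $0 \to Z \to X^0 \to X^1 \to \cdots$ of $A$-modules that splits over $R$ termwise, so $Z$ is a syzygy-type object and in particular lies in $\Mod(A,R)$. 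A chain homotopy between two maps of acyclic complexes of projectives induces, on cokernels, a map factoring through a projective, so the assignment descends to a functor $F\colon \mathbf{Ac}(\Prj A,R)\to\StMod(A,R)$; it is $R$-linear and, because rotating $X$ corresponds to shifting the cokernel via the defining sequence $0\to Z\to X^0\to \Sigma Z\to 0$ with $X^0$ projective-injective, it is a triangulated functor (it sends mapping cones to mapping cones).

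For the quasi-inverse, I would use complete resolutions. Given $M\in\Mod(A,R)$, splice its projective resolution $\bfp M$ with a "coresolution" built from the injective resolution $\bfi M$: since $A$ is a Frobenius $R$-algebra, injective objects of $\Mod(A,R)$ coincide with the projective ones ($\add\omega_{A/R}=\add A$ globally, hence $\mathrm{Add}\,\omega_{A/R}=\mathrm{Add}\,A=\Prj A$ by Lemma~\ref{le:proj-inj}), so $\bfi M$ is a bounded-below complex of projective $A$-modules, and it is $K$-projective over $R$ because $A$ is Frobenius over $R$. Splicing gives a complete resolution $\bft M$: an acyclic complex of projective $A$-modules with $\Coker(d^{-1}_{\bft M})=M$, and it lies in $\mathbf{Ac}(\Prj A,R)$ precisely because both halves are $K$-projective over $R$ (equivalently, $\bft M$ is acyclic and $R$-split in each degree, hence contractible over $R$). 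Standard comparison arguments show $\bft M$ is unique up to homotopy and functorial up to homotopy in $M$, so $M\mapsto \bft M$ descends to a functor $G\colon \StMod(A,R)\to\mathbf{Ac}(\Prj A,R)$.

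It remains to check $F$ and $G$ are mutually inverse. One composite is essentially tautological: $F(\bft M)=\Coker(d^{-1}_{\bft M})=M$ by construction, naturally in $M$. For the other, given $X\in\mathbf{Ac}(\Prj A,R)$ with $Z=\Coker(d^{-1}_X)$, I would produce a homotopy equivalence $\bft Z\simeq X$: the truncations of $X$ on either side of degree $0$ are, respectively, a projective resolution and a projective "resolution" in the injective direction of $Z$, so by the comparison theorems in a Frobenius exact category they agree up to homotopy with the corresponding halves of $\bft Z$; gluing these along $Z$ gives the required equivalence $GF(X)\simeq X$, natural up to homotopy. Finally, both $F$ and $G$ are triangulated, so the equivalence is one of triangulated categories, and $R$-linearity is clear throughout.

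The main obstacle, and the point that genuinely uses the hypothesis that $A$ is a \emph{Frobenius $R$-algebra} rather than merely a finite projective one, is the comparison between $X$ and $\bft Z$ on the "injective side": one needs that the right half of $X$ is a coresolution of $Z$ by objects that are simultaneously injective and projective in $\Mod(A,R)$, so that the usual homotopy-lifting arguments apply, and one needs $\bft Z$ to actually land in $\mathbf{Ac}(\Prj A,R)$, i.e. to be $K$-projective over $R$ — this is exactly the content of the remark in the excerpt that $\bfi M$ and $\bft M$ are $K$-projective over $R$ when $A$ is Frobenius over $R$. Keeping straight which exact structure ("$\Mod A$" versus "$\Mod(A,R)$") governs each acyclicity and $K$-projectivity claim, and invoking the identification of the two notions of acyclic complex established just before the lemma, is where the argument requires care.
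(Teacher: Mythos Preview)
Your proposal is correct and follows the same approach as the paper: define the functor via $\Coker(d^{-1}_X)$, use contractibility over $R$ to see it lands in $\Mod(A,R)$, and construct the quasi-inverse by complete resolutions $\bft M$. The paper's proof is a two-sentence sketch of exactly this argument; you have simply filled in the standard details (homotopies descend, comparison of truncations with the halves of $\bft Z$, where the Frobenius hypothesis enters), all of which are correct.
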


\begin{proof}
Each $X$ in $\mathbf{Ac}(\Prj A,R)$ is contractible when restricted to $R$. Thus the assignment
  $X\mapsto \Coker(d^{-1}_X)$ is well-defined.  The quasi-inverse
  takes $M\in\Mod(A,R)$ to a complete resolution $\bft M$ in
$\Prj A$.
\end{proof}

\begin{proposition}\label{pr:KProj-loc-seq}
When $A$ is a Frobenius $R$-algebra the canonical functor
 \[
 \bfK(\Prj A,R)\lra\bfD (\Mod A)
 \]
 induces a localisation sequence
\begin{equation*}
\begin{tikzcd}
\mathbf{Ac}(\Prj A,R) \arrow[hookrightarrow, yshift=-.75ex]{rr} && \bfK(\Prj A,R)
\arrow[twoheadrightarrow,yshift=.75ex]{ll}[swap]{\bft}
  \arrow[twoheadrightarrow, yshift=-.75ex]{rr} &&\dcat A
   \arrow[tail,yshift=.75ex]{ll}[swap]{\bfp }
\end{tikzcd}
\end{equation*}
\end{proposition}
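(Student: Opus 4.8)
The plan is to build the localisation sequence by restricting to $\bfK(\Prj A,R)$ the standard adjunction on $\bfK(\Prj A)$, and then to obtain the left-hand half by formal Bousfield-localisation arguments.

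I would begin from the fact, recalled just before the statement, that $K$-projective resolution gives a fully faithful left adjoint $\bfp$ to the canonical functor $\bfq\colon\bfK(\Prj A)\to\dcat A$, with essential image the $K$-projective complexes; in particular every $X\in\bfK(\Prj A)$ sits in a triangle $\bfp\bfq X\to X\to C_X\to\Sigma\bfp\bfq X$ with $C_X$ acyclic, so that $\bfq$ exhibits $\dcat A$ as $\bfK(\Prj A)/\mathbf{Ac}(\Prj A)$. The one point that needs genuine care is that $\bfp$ lands inside $\bfK(\Prj A,R)$. I would argue this from the fact that $\bfK(\Prj A,R)$ is a localising subcategory of $\bfK(\Prj A)$ containing the stalk complex $A$ (which is $R$-projective): since $\bfp$ is triangulated, preserves coproducts, fixes $A$, and $\dcat A$ is generated as a localising subcategory by $A$, the image of $\bfp$ is contained in the localising subcategory generated by $A$, hence in $\bfK(\Prj A,R)$. (Alternatively, one can check directly that a $K$-projective complex over $A$ that is degreewise projective over $R$ is $K$-projective over $R$, using the adjunction $\Hom_{\bfK(A)}(-,\Hom_R(A,Y))\cong\Hom_{\bfK(R)}(-,Y)$ and that $\Hom_R(A,-)$ preserves acyclicity.)

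Granting this, $\bfq$ restricts to a triangulated functor $\bfK(\Prj A,R)\to\dcat A$ which still admits $\bfp$ as a fully faithful left adjoint. A triangulated functor with a fully faithful left adjoint induces an equivalence from the Verdier quotient by its kernel onto its target; its kernel here is the class of acyclic complexes in $\bfK(\Prj A,R)$, that is $\mathbf{Ac}(\Prj A,R)$ (using the observation, made before the statement, that acyclicity relative to $\Mod A$ and to $\Mod(A,R)$ agree on $\bfK(\Prj A,R)$). This yields $\dcat A\simeq\bfK(\Prj A,R)/\mathbf{Ac}(\Prj A,R)$ with $\bfp$ as the right-hand adjoint in the diagram.

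For the inclusion $\mathbf{Ac}(\Prj A,R)\hookrightarrow\bfK(\Prj A,R)$ I would use, for $X\in\bfK(\Prj A,R)$, the triangle $\bfp\bfq X\to X\to\bft X\to\Sigma\bfp\bfq X$ coming from the counit. Then $\bft X$ is a complex of projective $A$-modules, lies in $\bfK(\Prj A,R)$ (which is triangulated), and is acyclic since $\bfq$ carries the counit $\bfp\bfq X\to X$ to an isomorphism; so $\bft X\in\mathbf{Ac}(\Prj A,R)$. Applying $\Hom_{\bfK(\Prj A,R)}(-,Z)$ to this triangle for $Z\in\mathbf{Ac}(\Prj A,R)$ and using that $\bfp\bfq X$, being $K$-projective over $A$, is right-orthogonal to acyclic complexes, one gets $\Hom_{\bfK(\Prj A,R)}(X,Z)\cong\Hom_{\mathbf{Ac}(\Prj A,R)}(\bft X,Z)$, i.e. $\bft$ is left adjoint to the inclusion; by Lemma~\ref{le:KProj-ac} this $\bft$ is the complete-resolution functor, as in the statement. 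Assembling the two adjunctions gives the asserted localisation sequence. (The existence of the adjoint to the inclusion of the kernel is in any case formal once the quotient functor and its fully faithful adjoint are in hand; the real content is the verification in the second paragraph.)
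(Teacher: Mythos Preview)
Your proposal is correct and follows essentially the same route as the paper: restrict the adjunction $(\bfp,\bfq)$ to $\bfK(\Prj A,R)$ and then complete the counit $\bfp X\to X$ to a triangle to obtain $\bft$. The paper's proof is terser---it simply asserts that $\bfp$ ``clearly lands in $\bfK(\Prj A,R)$'' and that the triangle ``provides the left adjoint $\bft$''---so your two justifications for the first point and your explicit verification of the adjunction property for $\bft$ supply detail that the paper omits.
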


\begin{proof}
The left adjoint $\bfp \colon \bfD(\Mod A) \to \bfK(\Prj A)$ clearly lands in $\bfK(\Prj A,R)$. For any $X\in \bfK(\Prj A)$, completing the counit $\bfp  X\to X$ to  an exact triangle 
\[
\bfp  X\lra X\lra \bft X\lra
\]
provides the left  adjoint $\bft$ for the inclusion of $\mathbf{Ac}(\Prj A,R)$.
\end{proof}

\subsection*{Compact generation}

Let $A$ be a Frobenius $R$-algebra. We write $\lfMod(A,R)$ for the localising subcategory of $\Mod(A,R)$ generated by the modules in $\rmod(A,R)$. By a \emph{localising
  subcategory} we mean a full subcategory closed under all coproducts and
satisfying the \emph{two-out-of-three property}: for any exact sequence
\[
0\lra X\lra Y\lra Z\lra 0
\]
if any two of $X,Y$, or $Z$ is in the subcategory so is the third.  The subcategory $ \lfMod(A,R)$ contains all the projective objects in $\Mod(A,R)$, and so is itself a Frobenius category, with induced exact structure. When $R$ is regular $ \lfMod(A,R)= \Mod(A,R)$, by Lemma~\ref{pr:KProj-regular}, but not always; see Section~\ref{se:example}. Set
\[
\StlfMod(A,R) \colonequals \St(\lfMod(A,R))\,.
\]
Here are the key properties of this triangulated category.

\begin{proposition}
\label{pr:StMod}
Let $A$ be a Frobenius $R$-algebra. The  triangulated category $\StlfMod(A,R)$ is compactly generated and its
  subcategory of compact objects identifies with the idempotent
  completion of $\stmod(A,R)$.
\end{proposition}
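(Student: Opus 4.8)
The plan is to realise $\StlfMod(A,R)$ as the stable category of a Frobenius category and then apply the standard compact-generation machinery for stable module categories of the kind developed in \cite{Krause:2005a,Krause:2022a}. First I would observe that, by construction, $\lfMod(A,R)$ is the localising subcategory of $\Mod(A,R)$ generated by $\rmod(A,R)$, and it contains all projective--injective objects of $\Mod(A,R)$; hence it inherits an exact Frobenius structure (as already noted in the text preceding the statement), and $\St(\lfMod(A,R))$ is triangulated. The essential point is to show that the triangulated subcategory of $\StlfMod(A,R)$ generated (under coproducts, shifts, cones, retracts) by the image of $\stmod(A,R)$ is all of $\StlfMod(A,R)$, and that this image consists of compact objects whose thick closure is the whole subcategory of compacts.

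Second, for the generation statement I would argue that since $\lfMod(A,R)=\Loc(\rmod(A,R))$ in the exact (equivalently, in the stable) sense, passing to the stable category the localising subcategory generated by $\stmod(A,R)$ is all of $\StlfMod(A,R)$; one must check that the stabilisation functor $\Mod(A,R)\to\StMod(A,R)$ sends exact-categorical coproducts and the two-out-of-three filtration witnessing membership in $\Loc(\rmod(A,R))$ to triangulated coproducts and triangles, which is routine from the description of the triangulated structure on $\St(-)$. Third, for compactness: each $M\in\rmod(A,R)$ is finitely generated over $R$, hence finitely presented over $A$ (as $A$ is module-finite over $R$), so $\Hom_{\Mod(A,R)}(M,-)$ commutes with the relevant coproducts, and passing to the quotient by $\mathrm{PHom}$ one gets that $\underline{\Hom}_{\StlfMod(A,R)}(M,-)$ commutes with coproducts — using that coproducts in $\StlfMod(A,R)$ are computed from coproducts in $\lfMod(A,R)$, which agree with those in $\Mod(A,R)$ since $A$ is finitely generated over $R$. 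Therefore the objects of $\stmod(A,R)$ are compact in $\StlfMod(A,R)$.

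Putting these together: $\StlfMod(A,R)$ is compactly generated by a set of objects coming from $\stmod(A,R)$, so by Neeman's theorem its subcategory of compact objects is the thick subcategory generated by $\stmod(A,R)$; since $\stmod(A,R)$ is already a thick subcategory of $\StMod(A,R)$ (it is the stable category of the exact subcategory $\rmod(A,R)$, closed under extensions, shifts and cones but possibly not retracts inside $\StlfMod(A,R)$), its thick closure is precisely its idempotent completion $(\stmod(A,R))^{\natural}$. This is where one must be slightly careful: a priori $\stmod(A,R)$ need not be idempotent complete, so the compacts form the idempotent completion rather than $\stmod(A,R)$ on the nose — which is exactly the content of the statement. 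One invokes the general fact that the thick subcategory generated by a set of objects closed under shifts, cones and finite coproducts is its idempotent completion (Balmer--Schlichting), or equivalently that the compacts in a compactly generated triangulated category $\Loc(\mathcal S)$ with $\mathcal S$ closed under these operations equal $\mathcal S^{\natural}$.

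The main obstacle I anticipate is the verification that coproducts and colimits behave correctly across the two reductions — from $\Mod(A,R)$ to $\lfMod(A,R)$, and from the exact category to its stable category — so that the finitely presented objects of $\rmod(A,R)$ genuinely remain compact in $\StlfMod(A,R)$, and that the localising subcategory generated by $\stmod(A,R)$ is the whole category. Both are essentially bookkeeping once one has set up the stable category of a Frobenius exact category carefully as in \cite[Sections 3.3, 4.2]{Krause:2022a}, but they are the substantive points; the idempotent-completion subtlety, by contrast, is formal. I would structure the proof as: (i) $\lfMod(A,R)$ is Frobenius and $\StlfMod(A,R)=\Loc(\stmod(A,R))$; (ii) objects of $\stmod(A,R)$ are compact; (iii) conclude by Neeman's characterisation of compacts together with the Balmer--Schlichting description of the thick closure.
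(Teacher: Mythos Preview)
Your proposal is correct and follows essentially the same approach as the paper's proof: compactness of objects in $\stmod(A,R)$ via the fact that coproducts in the stable category are computed in the Frobenius category, generation by definition of $\lfMod(A,R)$, and then the standard Neeman-type identification of compacts (the paper cites \cite[Proposition~3.4.15]{Krause:2022a} where you invoke Neeman and Balmer--Schlichting). The paper's proof is three sentences; your elaboration on why $\underline{\Hom}(M,-)$ commutes with coproducts and why generation in the exact sense passes to the stable category simply makes explicit what the paper leaves implicit.
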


\begin{proof}
  The canonical functor $ \lfMod(A,R)\to\StlfMod(A,R)$ preserves coproducts and hence  $\stmod(A,R)$ consists of  compact objects. They generate $\StlfMod(A,R)$ as a triangulated category by definition so an arbitrary  compact object is a direct summand of an object in  $\StlfMod(A,R)$; see \cite[Proposition~3.4.15]{Krause:2022a}.
\end{proof}

\begin{chunk}
The category $\stmod(A,R)$ is idempotent complete when  $R$ is a complete local ring. This need not be the case for a general $R$.

Indeed, the local ring $A\colonequals \mathbb{C}[x,y]_{(x,y)}/(x^2-y^2(y+1))$, viewed as an algebra over $R\colonequals \mathbb{C}[x]_{(x)}$, is a Frobenius $R$-algebra; this can be checked easily. Since $R$ is a DVR, a finitely generated module over it is projective (equivalently, free) if and only it is torsion-free. Thus an $A$-module is in $\rmod(A,R)$ if and only if it has positive depth, that is to say, it is maximal Cohen--Macaulay. Thus $\stmod(A,R)$ is the stable category of maximal Cohen--Macaulay $A$-modules. This category is not idempotent complete; see \cite[Lemma~6.2.12]{Krause:2022a} for a proof.

For an example where $A$ is the group algebra of a finite group, see forthcoming work of J.~Grodal and A.~Krause.
\end{chunk}

Set $\cat A\colonequals \Mod(A,R)$. For a full additive subcategory $\cat C\subseteq\cat A$ we write $\bfK(\cat C)$ for the homotopy category of chain complexes. We set
\[
\bfK^{+,b}(\cat C)=\{X\in\bfK^+(\cat C)\mid H^nX=0 \textrm{ for }|n|\gg 0\}
\]
where $\bfK^+(\cat C)$ denotes the complexes $X$ with $X^n=0$ for $n\ll 0$, and the condition $H^nX=0$ means that $d_X^{n-1}$ can be written as the
composite 
\[
X^{n-1}\twoheadrightarrow\Ker d^n_X\rightarrowtail X^n
\] 
of an admissible epimorphism and an admissible monomorphism in $\cat A$.  In
particular, the subcategory $\bfK^{+,b}(\cat C)$ depends on the ambient
category $\cat A$, even though it is not part of the notation.
The subcategory  $\bfK^{-,b}(\cat C)$ is defined analogously.

\begin{lemma}
\label{le:KProj-compact}
Let $A$ be a Frobenius $R$-algebra. For an object $X$ in $\bfK(\Prj A)$ the following conditions are equivalent.
\begin{enumerate}[\quad\rm(1)]
\item $X$ is compact  in $\bfK(\Prj A)$ and $K$-projective over $R$.
\item $X$ fits into an extension $0\to X'\to X\to X''\to 0$ such that
  $X'=\bfi M$ for some $M\in\rmod(A,R)$ and $X''$ is perfect.
\item $X$ is in $\bfK^{+,b}(\prj A)$.
\end{enumerate}
Moreover, such an $X$ is perfect over $R$.
\end{lemma}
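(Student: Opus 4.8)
The plan is to establish the three equivalences in a cycle and then deduce the "moreover" clause about perfection over $R$. The key technical tool is the description of compact objects in $\bfK(\Prj A)$ in terms of $\bfK^{+,b}(\prj A)$, which is classical (due to Jørgensen and Neeman, and recalled in Krause's references) for the homotopy category of projectives over a coherent or noetherian ring; since $A$ is a finite projective $R$-algebra with $R$ not necessarily noetherian, I would first check that the relevant finiteness is still available — here it is, because $\rmod(A,R)$ is an exact category with enough projectives and the projectives are precisely $\add A$ by Lemma~\ref{le:proj-inj}, so $\prj A$ is a reasonable class of "small" objects to resolve against.

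First I would prove (3) $\Rightarrow$ (1): if $X \in \bfK^{+,b}(\prj A)$ then $X$ is bounded below and eventually acyclic above, hence (using the admissible epi–mono factorisation of the differential in the definition of $\bfK^{+,b}$) quasi-isomorphic — in fact homotopy equivalent in $\bfK(\Prj A)$ — to a bounded complex of finitely generated projective $A$-modules, by truncating: the cokernel of the last relevant differential lands in $\rmod(A,R)$ and one splices. Such a bounded complex of finitely generated projectives is compact in $\bfK(\Prj A)$ (it is even perfect over $A$), and since each $A$-module is finite projective over $R$, a bounded complex of them is $K$-projective over $R$; this also already gives the "moreover" statement for objects satisfying (3). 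Next, (1) $\Rightarrow$ (2): given $X$ compact in $\bfK(\Prj A)$ and $K$-projective over $R$, I would use the localisation sequence of Proposition~\ref{pr:KProj-loc-seq}. The image $\bfq X$ of $X$ in $\bfD(\Mod A)$ is a compact object — compactness is inherited along $\bfK(\Prj A, R) \to \bfD(\Mod A)$ because $\bfp$ is fully faithful with right adjoint the quotient functor — so $\bfq X$ is a perfect complex of $A$-modules, i.e.\ $X'' \colonequals \bfp\bfq X$ is perfect. The fibre of the counit $X \to X''$ is the acyclic part $\bft X \in \mathbf{Ac}(\Prj A,R)$, which by Lemma~\ref{le:KProj-ac} corresponds to a module $M \colonequals \Coker(d^{-1}_{\bft X})$ in $\StMod(A,R)$; since $X$ is compact it lands, up to the perfect summand, in the compactly generated piece, so $M$ may be taken in $\rmod(A,R)$, and $\bft X \simeq \bfi M$ after rechoosing the complete resolution to be the injective resolution continued downward — giving the extension $0 \to \bfi M \to X \to X'' \to 0$ in (2). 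Finally (2) $\Rightarrow$ (3): $\bfi M$ for $M \in \rmod(A,R)$ is a bounded-below complex of objects of $\add \omega_{A/R} = \add A$ (using the Frobenius hypothesis) with homology concentrated in degree $0$, hence lies in $\bfK^{+,b}(\prj A)$; a perfect complex $X''$ over $A$ lies in $\bfK^{b}(\prj A) \subseteq \bfK^{+,b}(\prj A)$; and $\bfK^{+,b}(\prj A)$ is closed under extensions, so $X \in \bfK^{+,b}(\prj A)$.

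The "moreover" clause then follows: it suffices by the equivalence to check it for $X \in \bfK^{+,b}(\prj A)$, and such an $X$ is (as noted above) homotopy equivalent to a bounded complex of finitely generated projective $A$-modules, each of which is finitely generated projective over $R$; a bounded complex of finite projective $R$-modules is by definition perfect over $R$.

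The main obstacle I anticipate is the passage (1) $\Rightarrow$ (2), specifically pinning down that the module $M$ produced from the acyclic part can be chosen finitely generated over $R$ and that its complete resolution can be arranged to agree with an injective (equivalently projective, by Frobenius) resolution in high enough degrees. This requires combining the compact generation statement of Proposition~\ref{pr:StMod} — that compacts of $\StlfMod(A,R)$ are summands of objects of $\stmod(A,R)$ — with a careful bookkeeping of where the compact object $X$ sits relative to the two halves of the localisation sequence; in particular one must argue that the perfect complex $X''$ absorbs the idempotent-completion discrepancy so that the remaining stable piece is genuinely represented by a lattice rather than merely a summand of one.
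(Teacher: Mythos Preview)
Your argument for (3)$\Rightarrow$(1) contains a genuine error: you claim that an object of $\bfK^{+,b}(\prj A)$ is homotopy equivalent to a \emph{bounded} complex of finitely generated projective $A$-modules, i.e.\ is perfect over $A$. This is false whenever $A$ has infinite global dimension, which is the generic situation for a Frobenius algebra. For instance, with $R=k$ a field and $A=k[x]/(x^2)$, the injective resolution $\bfi k$ lies in $\bfK^{+,b}(\prj A)$ but is not perfect over $A$. Truncating above leaves you with a complex whose top term is in $\rmod(A,R)$ but not in $\prj A$, and there is nothing to ``splice'' it with to repair this. Consequently your proof of the ``moreover'' clause, which rests on the same claim, also fails. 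The paper establishes compactness of $\bfi M$ differently: for any $Y$ in $\bfK(\Prj A)$ one has $\Hom_{\bfK(A)}(\bfi M,Y)\cong\Hom_{\bfK(A)}(M,Y)$, and the right-hand side commutes with coproducts because $M$ is finitely presented. This is the missing idea.

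Your approach to (1)$\Rightarrow$(2) via the localisation sequence is also problematic. First, to conclude that $\bfq X$ is compact in $\bfD(\Mod A)$ you would need $\bfq$ to have a coproduct-preserving right adjoint; Proposition~\ref{pr:KProj-loc-seq} only supplies the \emph{left} adjoint $\bfp$, and the right adjoint in the recollement of Corollary~\ref{co:recollement} is constructed later, using this very lemma. Second, as you yourself note, showing that the acyclic part is $\bfi M$ for an actual lattice $M$ (rather than a summand of one) is delicate from your side. The paper bypasses all of this: restriction $\bfK(\Prj A)\to\bfK(\Prj R)$ preserves compactness (its right adjoint $\Hom_R(A,-)$ preserves coproducts), so $X$ is compact and $K$-projective over $R$, hence perfect over $R$; this already gives the ``moreover'' clause. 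Then a result of Neeman places $X$ in $\bfK^+(\prj A)$, and since its cohomology is bounded one truncates at degree $n$ to produce $X''$ perfect and $X'$ concentrated in degrees $\ge n$. The module $M=\Ker d^n_X$ is shown to lie in $\rmod(A,R)$ by observing that the splice of $X'$ with a projective resolution of $M$ is acyclic and $K$-projective over $R$, hence contractible over $R$; thus $X'=\bfi M$ as required.
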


\begin{proof}
  (1)$\Rightarrow$(2)  Let $X$  be compact and $K$-projective over $R$. The restriction functor $\bfK(\Prj A)\to \bfK(\Prj R)$  preserves compactness, since its right adjoint $\Hom_R(A,-)$  preserves coproducts.  Thus $X$ is compact over $R$. Since it is also $K$-projective over $R$ we deduce that it is perfect over $R$. So $X$ belongs to $\bfK^+(\prj A)$ by \cite[Proposition~7.6]{Neeman:2008a},
  and $H^i(X)=0$ for all $i\ge n$ and some $n\in\bbZ$. We truncate in degree $n$ and write $X$ as an
  extension $0\to X'\to X\to X''\to 0$ such that $X'$ is concentrated
  in degrees $i \ge n$ and $X''$ is concentrated in degrees $i
  <n$. Clearly, $X''$ is perfect and hence $X'$ is $K$-projective
  over $R$. Splicing $X''$ together with a projective resolution of
  $M\coloneqq\Ker d^n_X$, yields a complex over $R$ that is acyclic and
  $K$-projective, and hence contractible. Thus
  $X''=\bfi M$ with $M\in\rmod(A,R)$.

  (2)$\Rightarrow$(1) It suffices to verify that $X''$ and $X'$ are compact and $K$-projective over $R$. Clearly $X''$ has these properties. Also $X'$ is
  $K$-projective over $R$.  For any $Y$ in $\bfK(\Prj A)$ one has an isomorphism
  \[
  \Hom_{\bfK(A)}(\bfi M,Y)\cong \Hom_{\bfK(A)}(M,A)\,,
  \] 
  by \cite[Lemma~2.1]{Krause:2005a}, so $X'$ is compact. 

  (2)$\Leftrightarrow$(3) This is clear. 
  
  The last assertion was verified in proving (1)$\Rightarrow$(2).
\end{proof}

We write $\lfK(\Prj A,R)$ for the localising subcategory of $\bfK(\Prj A)$ that is generated by the objects
satisfying the equivalent conditions in Lemma~\ref{le:KProj-compact}. Thus one has inclusions
\[
\lfK(\Prj A,R) \subseteq \bfK(\Prj A,R) \subseteq \bfK(\Prj A)
\]
which become equalities when the ring $R$ is regular; cf.\ Proposition~\ref{pr:KProj-regular}. In fact, it is not hard to verify that equality holds on the right only if $R$ is regular. Equality holds on the left when, for instance, $A=R$, and also in some other cases, but not always; see Section~\ref{se:example}.

\begin{proposition}
\label{pr:KProj}
Let $A$ be a Frobenius $R$-algebra. The triangulated category $\lfK(\Prj A,R) $ is compactly generated and the canonical functor
  $\lfK(\Prj A,R)\to\bfD (\Mod A)$ induces a triangle
  equivalence
  \[
\lfK(\Prj A,R)^{c}\longiso\, \bfD^b(\rmod(A,R))\,.
  \]
\end{proposition}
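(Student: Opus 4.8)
The plan is to exhibit an essentially small set of compact generators for $\lfK(\Prj A,R)$, deduce from the standard compact-generation theorem that its subcategory of compact objects is the thick subcategory they generate, and then identify this thick subcategory with $\bfD^b(\rmod(A,R))$ via the canonical functor $\bfq\colon\bfK(\Prj A)\to\bfD(\Mod A)$. Most of the substantive work has already been packaged into Lemma~\ref{le:KProj-compact}; I expect the one potentially delicate point — that $\bfq$ is fully faithful on the relevant subcategory — to be immediate, because the generators are bounded-below complexes of projective $A$-modules and are therefore $K$-projective over $A$.

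First I would take $\cat G\colonequals\bfK^{+,b}(\prj A)$, which by Lemma~\ref{le:KProj-compact} is precisely the class of objects of $\bfK(\Prj A)$ that are compact there and $K$-projective over $R$. Condition~(2) of that lemma exhibits every object of $\cat G$ as an extension of some $\bfi M$ with $M\in\rmod(A,R)$ by a perfect complex, so $\cat G$ is essentially small. Since $\lfK(\Prj A,R)=\Loc(\cat G)$ by definition, the compact-generation theorem (see \cite{Krause:2022a}) shows that $\lfK(\Prj A,R)$ is compactly generated and that $\lfK(\Prj A,R)^{c}=\Thick(\cat G)$, the thick subcategory of $\bfK(\Prj A)$ generated by $\cat G$; this proves the first assertion.

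Next, for full faithfulness: each object of $\cat G$ is a bounded-below complex of projective $A$-modules, hence $K$-projective over $A$, so for $X\in\cat G$ and any $Y\in\bfK(\Prj A)$ the comparison map $\Hom_{\bfK(\Prj A)}(X,Y)\to\Hom_{\bfD(\Mod A)}(\bfq X,\bfq Y)$ is bijective. Fixing $Y\in\Thick(\cat G)$, the full subcategory of those $X$ for which this map is bijective is thick — closed under shifts, under cones by the five lemma, and under direct summands — and it contains $\cat G$, hence $\Thick(\cat G)$; so $\bfq$ is fully faithful on $\lfK(\Prj A,R)^{c}=\Thick(\cat G)$.

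Finally, to pin down the essential image: the objects of $\cat G$ are perfect over $R$ by the last assertion of Lemma~\ref{le:KProj-compact}, so by Proposition~\ref{pr:cofinal} their $\bfq$-images lie in $\bfD^b(\rmod(A,R))\subseteq\bfD(\Mod A)$, and therefore so does the whole essential image of $\bfq|_{\Thick(\cat G)}$. Conversely, since $A$ is a Frobenius $R$-algebra the injective objects of $\Mod(A,R)$ are projective $A$-modules (Lemmas~\ref{le:proj-inj} and~\ref{le:Frobenius}), so for any lattice $M$ the injective resolution $\bfi M$ of~\eqref{eq:Sigma-Omega} already belongs to $\cat G$ and satisfies $\bfq(\bfi M)\cong M$ in $\bfD(\Mod A)$. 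The essential image of $\bfq|_{\Thick(\cat G)}$ is thus a triangulated subcategory containing every lattice, hence contains the triangulated subcategory generated by $\rmod(A,R)$, which by brutal truncation is all of $\bfD^b(\rmod(A,R))$. Combining the inclusions gives the triangle equivalence $\lfK(\Prj A,R)^{c}\longiso\bfD^b(\rmod(A,R))$. The only steps that need genuine care — the essential smallness of $\cat G$ and the $K$-projectivity over $A$ of bounded-below complexes of projectives — are standard, so I do not anticipate a real obstacle beyond correctly invoking the compact-generation machinery.
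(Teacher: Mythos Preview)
Your proof is correct but proceeds along a somewhat different route than the paper's. You work through the canonical functor $\bfq$ to $\bfD(\Mod A)$: using that bounded-below complexes of projectives are $K$-projective over $A$ to get full faithfulness, and then pinning down the essential image via Proposition~\ref{pr:cofinal} and the injective resolutions $\bfi M$. The paper instead invokes in one line the standard equivalence $\bfK^+(\prj A)\iso\bfD^+(\rmod(A,R))$ for the exact category $\rmod(A,R)$ --- which has $\prj A$ as its injective objects since $A$ is Frobenius --- and then restricts to bounded cohomology to obtain $\lfK(\Prj A,R)^c=\bfK^{+,b}(\prj A)\iso\bfD^b(\rmod(A,R))$. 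Your argument has the merit of verifying directly that the equivalence is induced by the stated functor to $\bfD(\Mod A)$, which the paper's proof leaves implicit (it tacitly relies on Proposition~\ref{pr:cofinal} to match the two). One small point: the compact-generation theorem in general only identifies $\lfK(\Prj A,R)^c$ with the \emph{idempotent completion} of $\Thick(\cat G)$; you should note that $\cat G=\bfK^{+,b}(\prj A)$ is already thick and idempotent complete, which is immediate from condition~(1) of Lemma~\ref{le:KProj-compact} since compactness in $\bfK(\Prj A)$ and $K$-projectivity over $R$ are both preserved under retracts.
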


\begin{proof}
  The triangulated category $\lfK(\Prj A,R)$ is compactly generated by
  construction.  For the description of the compact objects recall
  that $\rmod(A,R)$ is an exact category with enough injective
  objects. Thus we have a triangle equivalence
  $\bfK^+(\prj A)\iso \bfD^+(\rmod(A,R))$ which restricts to an
  equivalence 
  \[
\lfK(\Prj A,R)^{c}=\bfK^{+,b}(\prj A)\iso \bfD^b(\rmod(A,R)).\qedhere
  \]
\end{proof}

We write $\bfD^{\mathrm{perf}}(A)$ for the category of perfect complexes, which identifies with the full subcategory
of compact objects in $\bfD(\Mod A)$.

\begin{corollary}
\label{co:recollement}
Let $A$ be a Frobenius $R$-algebra. Then the canonical functor $\lfK(\Prj A,R)\to\bfD (\Mod A)$ induces a recollement
\begin{equation*}
\label{eq:recollement}
\begin{tikzcd}
\StlfMod(A,R) \arrow[tail]{rr} && \lfK(\Prj A,R)
\arrow[twoheadrightarrow,yshift=1.5ex]{ll}[swap]{\bft}
\arrow[twoheadrightarrow,yshift=-1.5ex]{ll}
  \arrow[twoheadrightarrow]{rr} &&\dcat A
   \arrow[tail,yshift=1.5ex]{ll}[swap]{\bfp }
   \arrow[tail,yshift=-1.5ex]{ll}
\end{tikzcd}
\end{equation*}
and the pair of left adjoints $(\bfp ,\bft)$  induces (when restricted
to compact objects) a triangle equivalence
\[
\bfD^b(\rmod(A,R))/\bfD^{\mathrm{perf}}(A)\longiso\stmod(A,R)\, .
\]
\end{corollary}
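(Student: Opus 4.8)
The plan is to build the recollement by cutting down the localisation sequence of Proposition~\ref{pr:KProj-loc-seq} to the subcategory $\lfK(\Prj A,R)$, and then to extract the statement about compact objects via Neeman's localisation theorem. First I would check that the localisation sequence restricts. Since $A$ is a compact generator of $\dcat A$ and the fully faithful left adjoint $\bfp\colon\dcat A\to\bfK(\Prj A,R)$ sends it to the complex $A$ in degree $0$, which lies in $\bfK^{+,b}(\prj A)=\lfK(\Prj A,R)^{c}$, and since $\bfp$ preserves coproducts, the functor $\bfp$ factors through $\lfK(\Prj A,R)$; completing the counit $\bfp qX\to X$ then shows $\bft$ also preserves $\lfK(\Prj A,R)$. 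Thus Proposition~\ref{pr:KProj-loc-seq} restricts to a localisation sequence $\mathbf{Ac}_{\mathrm{lf}}\hookrightarrow\lfK(\Prj A,R)\to\dcat A$, with the same adjoints $\bft,\bfp$, where $\mathbf{Ac}_{\mathrm{lf}}\colonequals\mathbf{Ac}(\Prj A,R)\cap\lfK(\Prj A,R)$ is the kernel of the quotient.

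The next step is to identify the kernel with $\StlfMod(A,R)$. By Lemma~\ref{le:KProj-compact}(2) every object of $\lfK(\Prj A,R)^{c}$ fits in an extension of some $\bfi M$, $M\in\rmod(A,R)$, by a perfect complex over $A$, and $\bft$ annihilates perfect complexes; as $\bft$ preserves coproducts and $\lfK(\Prj A,R)=\Loc(\lfK(\Prj A,R)^{c})$, this gives $\mathbf{Ac}_{\mathrm{lf}}=\Loc(\bft\bfi M\mid M\in\rmod(A,R))$. Under the equivalence $\mathbf{Ac}(\Prj A,R)\longiso\StMod(A,R)$ of Lemma~\ref{le:KProj-ac} each $\bft\bfi M$ corresponds to a suspension of $M$, so $\mathbf{Ac}_{\mathrm{lf}}$ is carried onto the localising subcategory of $\StMod(A,R)$ generated by $\stmod(A,R)$; a two-out-of-three argument comparing this with the definition of $\lfMod(A,R)$, together with the immediate fact that $\St(\lfMod(A,R))\to\StMod(A,R)$ is fully faithful, identifies it with $\StlfMod(A,R)$. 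In particular $\mathbf{Ac}_{\mathrm{lf}}\simeq\StlfMod(A,R)$ is compactly generated, by Proposition~\ref{pr:StMod}, with compacts the idempotent completion of $\stmod(A,R)$.

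Now all three terms of the restricted localisation sequence are compactly generated ($\lfK(\Prj A,R)$ by Proposition~\ref{pr:KProj}, $\dcat A$ with compacts $\bfD^{\mathrm{perf}}(A)$, and $\mathbf{Ac}_{\mathrm{lf}}$ by the last step) and all the functors involved preserve coproducts, so Brown representability furnishes right adjoints to the inclusion $\StlfMod(A,R)\hookrightarrow\lfK(\Prj A,R)$ and to the quotient $\lfK(\Prj A,R)\to\dcat A$, yielding the recollement. For the equivalence on compact objects, note that $\ker(\bft)$ is the subcategory of $K$-projective complexes, which is the essential image of $\bfp$, and that $\bfp(\dcat A)=\Loc_{\lfK(\Prj A,R)}(A)$ is generated by the compact object $A$; hence $\bft$ exhibits $\StlfMod(A,R)$ as the Verdier quotient $\lfK(\Prj A,R)/\bfp(\dcat A)$, and Neeman's localisation theorem gives an equivalence
\[
\bigl(\bfD^{b}(\rmod(A,R))/\bfD^{\mathrm{perf}}(A)\bigr)^{\natural}\longiso\StlfMod(A,R)^{c}=\bigl(\stmod(A,R)\bigr)^{\natural}
\]
induced by $\bft$, where $\bfD^{\mathrm{perf}}(A)$ is identified with the thick subcategory $\Thick(A)\subseteq\bfD^{b}(\rmod(A,R))$.

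Finally I would remove the idempotent completions. The functor $\bft$ sends $\bfD^{b}(\rmod(A,R))=\lfK(\Prj A,R)^{c}$ into $\stmod(A,R)$ — every such object is an iterated mapping cone of generators $\bfi M$, and $\bft\bfi M$ is a suspension of $M\in\stmod(A,R)$, which is triangulated inside $\StlfMod(A,R)$ — and it annihilates $\bfD^{\mathrm{perf}}(A)$, so it factors through a functor $\bfD^{b}(\rmod(A,R))/\bfD^{\mathrm{perf}}(A)\to\stmod(A,R)$ which is essentially surjective and whose idempotent completion is the equivalence just obtained; hence it is fully faithful, and therefore an equivalence. The points I expect to require the most care are pinning the kernel down to be \emph{exactly} $\StlfMod(A,R)$, rather than merely a localising subcategory between $\stmod(A,R)$ and $\StMod(A,R)$, and — in the same spirit — verifying that $\bft$ does not leave $\stmod(A,R)$ on compact objects, so that the quotient equivalence holds before, not only after, idempotent completion.
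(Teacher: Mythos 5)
Your proof is correct, and the construction of the recollement itself follows the paper's route closely: restrict the localisation sequence of Proposition~\ref{pr:KProj-loc-seq} to $\lfK(\Prj A,R)$, identify the kernel with $\StlfMod(A,R)$ via Lemma~\ref{le:KProj-ac} (you unwind $\bft\bfi M \simeq \Sigma M$ a bit more explicitly than the paper, which simply notes that $\lfK(\Prj A,R)$ is generated by the $\bfi M$), and use Brown representability to supply the remaining right adjoints. The genuine divergence is in the proof of the equivalence on compacts. The paper invokes the abstract Frobenius-category quotient theorem $\bfD^b(\cat A)/\bfD^b(\cat P) \longiso \St\cat A$ from \cite[Proposition~4.4.18]{Krause:2022a} and checks the compacts match via Proposition~\ref{pr:KProj}. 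You instead apply Neeman's localisation theorem, which only gives an equivalence $\bigl(\bfD^b(\rmod(A,R))/\bfD^{\mathrm{perf}}(A)\bigr)^{\natural} \longiso \bigl(\stmod(A,R)\bigr)^{\natural}$, and then strip off the idempotent completions by checking that $\bft$ actually lands in $\stmod(A,R)$ on compacts and that the induced functor is essentially surjective (e.g.\ because $\bft(\bfi M[-1])$ recovers $M$). Both routes are sound. Yours is more self-contained and makes the idempotent-completion issue explicit — which the paper elides — at the cost of an extra verification step; the paper's is shorter but leans on a cited structural fact about Frobenius exact categories and leaves implicit that $\bft$ is the functor realising that abstract equivalence. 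Either way, the key point you correctly flagged as delicate — that the kernel is exactly $\StlfMod(A,R)$ and that $\bft$ does not leave $\stmod(A,R)$ on compacts — is handled properly in your write-up.
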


\begin{proof}
Recall that $\bfp$ is left adjoint to $\bfq\colon \lfK(\Prj A,R) \to\bfD (\Mod A)$. The latter preserves coproducts so it admits a right adjoint, by Brown representability. This yields the right half of the recollement; the left half is a consequence. 
  
  Indeed, we have the full subcategory of acyclic complexes in $\lfK(\Prj A,R)$ on the left, that identifies with a localising subcategory of $\StMod(A,R)$ by  Lemma~\ref{le:KProj-ac}. This subcategory is generated by the modules from
  $\rmod(A,R)$ since $\lfK(\Prj A,R)$ is generated by their injective
  resolutions, and therefore it equals $\StlfMod(A,R)$. 

As to the description of the compact objects, $\bfq$ preserves coproducts so the left adjoint $\bfp $ preserves compactness. The compacts in $\bfD(\Mod A)$ identify with the category $\bfD^{\mathrm{perf}}(A)$ of perfect  complexes, while the compacts in $\lfK(\Prj A,R)$ identify with
  $\bfD^b(\rmod(A,R))$ by Proposition~\ref{pr:KProj}. This yields the
  last assertion, since for any Frobenius category $\cat A$ with full
  subcategory of projectives $\cat P$ we have
  \[
  \bfD^b(\cat A)/\bfD^b(\cat P)\longiso\St\cat A\,,
  \]
  for example by \cite[Proposition~4.4.18]{Krause:2022a}. 
\end{proof}

\subsection*{Regular base}
A commutative ring $R$ is \emph{regular} if it is noetherian and the local ring $R_\fp$ has finite global dimension for each $\fp$ in $\Spec R$; when the Krull dimension of $R$ is finite, $R$ is regular if and only if its global dimension is finite; see \cite[Section~2.2]{Bruns/Herzog:1998a}. In what follows, we often use the following characterisation of regularity: A  noetherian commutative ring  $R$ is regular if and only if every complex of projective $R$-modules is $K$-projective; equivalently, every acyclic complex of projective modules is contractible; see, for instance, \cite[Section~3]{Iacob/Iyengar:2009a}.

\begin{proposition}
\label{pr:KProj-regular}
  Let $R$ be a regular ring and $A$ a Frobenius $R$-algebra. There are equalities
  \[
   \lfMod(A,R)=\Mod(A,R)\qquad\text{and}\qquad \lfK(\Prj A,R)=\bfK(\Prj A)\,.
 \]
Moreover, there is an equality $\bfD(\Mod(A,R))=\bfD(\Mod A)$.
\end{proposition}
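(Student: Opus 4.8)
The plan is to establish the three equalities in turn: first $\bfD(\Mod(A,R))=\bfD(\Mod A)$, then $\lfK(\Prj A,R)=\bfK(\Prj A)$, and finally $\lfMod(A,R)=\Mod(A,R)$, the last one being read off from the second by comparing the localisation sequences already in hand. For the derived category statement I would invoke Proposition~\ref{pr:derived-exact}: the functor $\bfD(\Mod(A,R))\to\bfD(\Mod A)$ is a quotient functor with kernel the complexes in $\Mod(A,R)$ that are acyclic in $\Mod A$. Such a complex is an acyclic complex of $R$-projective modules, hence --- since $R$ is regular, so that every acyclic complex of projective $R$-modules is contractible --- it is contractible over $R$; its cycles are therefore $R$-direct summands and so lie in $\Mod(A,R)$, which means the complex is already acyclic in the exact category $\Mod(A,R)$. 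Thus the kernel is zero and the quotient functor is an equivalence.

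Next, $\bfK(\Prj A,R)=\bfK(\Prj A)$ is immediate, since a complex of projective $A$-modules is in particular a complex of projective $R$-modules, hence automatically $K$-projective over $R$ by regularity --- and this is exactly the condition that cuts $\bfK(\Prj A,R)$ out of $\bfK(\Prj A)$. Because of this, the hypothesis ``$K$-projective over $R$'' appearing in Lemma~\ref{le:KProj-compact} is vacuous for objects of $\bfK(\Prj A)$, so the equivalence of conditions (1) and (3) in that lemma identifies the compact objects of $\bfK(\Prj A)$ with $\bfK^{+,b}(\prj A)$. Since $A$ is module-finite over the noetherian ring $R$, it is a noetherian ring, and hence $\bfK(\Prj A)$ is compactly generated (see, e.g., \cite{Neeman:2008a,Krause:2022a}). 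Combining these two facts yields $\bfK(\Prj A)=\Loc(\bfK^{+,b}(\prj A))=\lfK(\Prj A,R)$, the last equality being the definition of $\lfK(\Prj A,R)$.

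Finally, for $\lfMod(A,R)=\Mod(A,R)$: we now know $\lfK(\Prj A,R)=\bfK(\Prj A,R)$, so these two homotopy categories have the same full subcategory of acyclic complexes. By Proposition~\ref{pr:KProj-loc-seq} this common subcategory is $\mathbf{Ac}(\Prj A,R)$, which is equivalent to $\StMod(A,R)$ by Lemma~\ref{le:KProj-ac}; on the other hand, as identified in the proof of Corollary~\ref{co:recollement}, the acyclic complexes in $\lfK(\Prj A,R)$ form the subcategory $\StlfMod(A,R)$ of $\StMod(A,R)$. Hence $\StlfMod(A,R)=\StMod(A,R)$. For any $M\in\Mod(A,R)$ this gives an isomorphism in $\StMod(A,R)$ between $M$ and some object $N\in\lfMod(A,R)$, hence an isomorphism $M\oplus P\cong N\oplus Q$ in $\Mod(A,R)$ with $P,Q$ projective; applying the two-out-of-three property to the split exact sequence $0\to P\to M\oplus P\to M\to 0$, and using that $\lfMod(A,R)$ contains all projective objects, we conclude $M\in\lfMod(A,R)$.

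The main obstacle --- and the only ingredient not already present in this section --- is the compact generation of $\bfK(\Prj A)$ for the noetherian ring $A$, or more precisely the interplay of this with the observation that regularity of $R$ makes the local finiteness condition vacuous on compact objects; this is precisely what fails for general $R$, as the example in Section~\ref{se:example} shows. I would warn against the natural-looking shortcut of trying to write an arbitrary acyclic complex of projectives as a colimit of bounded complexes of finitely generated projectives: the relevant brutal truncations of a complete resolution form an \emph{inverse} system and thus recover it as a homotopy limit, not a colimit, and localising subcategories need not be closed under homotopy limits.
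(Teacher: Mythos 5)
Your proposal is correct and follows essentially the same route as the paper: deduce $\lfK(\Prj A,R)=\bfK(\Prj A)$ from compact generation of $\bfK(\Prj A)$ plus the vacuity of the $K$-projectivity condition in Lemma~\ref{le:KProj-compact} under regularity, pass through the acyclic-complex comparison of Proposition~\ref{pr:KProj-loc-seq} and Corollary~\ref{co:recollement} (via Lemma~\ref{le:KProj-ac}) to get $\StlfMod(A,R)=\StMod(A,R)$ and hence $\lfMod(A,R)=\Mod(A,R)$, and verify that regularity forces acyclic complexes over $\Mod A$ in $\Mod(A,R)$ to be acyclic in $\Mod(A,R)$ so that the quotient functor from Proposition~\ref{pr:derived-exact} has zero kernel. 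The only difference is cosmetic (you treat the derived-category equality first, the paper treats it last), and your closing caveat about homotopy limits versus colimits is a sound observation, not a gap.
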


\begin{proof}
  By \cite[Proposition~7.14]{Neeman:2008a}, the triangulated category $\bfK(\Prj A)$ is compactly generated.  From Lemma~\ref{le:KProj-compact} and regularity of $R$, it follows that each compact object
  belongs to $\lfK(\Prj A,R)$. Thus $ \lfK(\Prj A,R)=\bfK(\Prj A)$.
Comparing the diagrams in Proposition~\ref{pr:KProj-loc-seq} and
Corollary~\ref{co:recollement} and using Lemma~\ref{le:KProj-ac}, we conclude that $\StlfMod(A,R)=\StMod(A,R)$. Therefore, any $M \in \Mod(A,R)$ may be represented by an object in $\lfMod(A,R)$ up to a projective summand. Since projectives are in $\lfMod(A,R)$, it follows that  $\lfMod(A,R)=\Mod(A,R)$. The last equality follows from Proposition~\ref{pr:derived-exact} once one observes that for any complex in $\Mod (A,R)$ being acyclic in $\Mod A$ implies contractible in $\Mod R$ and therefore also acyclic in $\Mod(A,R)$.
\end{proof}

\section{Fibrewise detection}
\label{se:fibrewisedetection}

Let $R$ be a noetherian commutative ring and $A$ a finite projective $R$-algebra. In this section we prove that certain homological properties of $A$-modules can be detected locally. To that end, for each $\fp$ in $\Spec R$, let $k(\fp)$ denote the residue field of $R$ at $\fp$, and for any $A$-complex $X$, set
\[
  \fibre X{\fp} \colonequals X\otimes_R k(\fp)\,.
\]
The assignment $X\mapsto \fibre X{\fp}$ induces exact \emph{fibre functors}
\begin{align*}
\Mod(A,R) &\lra \Mod_{k(\fp)} A_{k(\fp)} =\Mod A_{k(\fp)} \\
\bfK(\Prj A,R) &\lra \bfK(\Prj A_{k(\fp)},k(\fp))=\bfK(\Prj A_{k(\fp)})\,.
\end{align*}
We prove that these functors are \emph{jointly conservative} when
$\fp$ runs through $\Spec R$; see Proposition~\ref{pr:conserve}. This requires some preparations.

\begin{lemma}
\label{le:Deligne}
  Let $R$ be a noetherian commutative ring, $A$ a flat $R$-algebra,
  and $F$ an $A$-module that is flat over $R$. If for each
  $\fp \in \Spec R$ the $A_{k(\fp)}$-module $F_{k(\fp)}$ is flat, then the
  $A$-module $F$ is flat.
\end{lemma}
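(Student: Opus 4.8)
\emph{Strategy.} The plan is to reduce to the case that $R$ is local by a localisation argument, and then to combine a flat base change computation for $\Tor$ with the local criterion for flatness. For the reduction, recall that $F$ is flat over $A$ if and only if $\Tor_1^A(F,A/I)=0$ for every finitely generated left ideal $I\subseteq A$. Each $\Tor_1^A(F,A/I)$ is an $R$-module, and since localisation is exact it vanishes if and only if $\Tor_1^{A_\fp}(F_\fp,(A/I)_\fp)=0$ for every $\fp\in\Spec R$. As the three hypotheses on $(R,A,F)$ are stable under replacing $R$ by $R_\fp$, we may assume from now on that $R$ is local, with maximal ideal $\fm$ and residue field $k$.

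\emph{Flat base change.} Write $A_k=A\otimes_R k$ and $F_k=F\otimes_R k$. Applying $A\otimes_R-$ to a free resolution of $k$ over $R$ yields a flat resolution of $A_k$ over $A$, because $A$ is flat over $R$; hence $F\lotimes_A A_k\simeq F\lotimes_R k\simeq F_k$, the last isomorphism because $F$ is flat over $R$. Associativity of the derived tensor product then gives, for every $A_k$-module $N$, a natural isomorphism $F\lotimes_A N\simeq(F\lotimes_A A_k)\lotimes_{A_k}N\simeq F_k\lotimes_{A_k}N$, so that $\Tor_i^A(F,N)\cong\Tor_i^{A_k}(F_k,N)$ for all $i$. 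Since $F_k$ is flat over $A_k$ by hypothesis, it follows that $\Tor_i^A(F,N)=0$ for all $i\ge 1$ whenever $N$ is annihilated by $\fm$; in particular $\Tor_1^A(A_k,F)=0$.

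\emph{Conclusion by induction on $\dim R$.} If $\dim R=0$ then $\fm A$ is nilpotent, and the local criterion for flatness along a nilpotent ideal applies: $F/\fm F=F_k$ is flat over $A/\fm A=A_k$ by hypothesis and $\Tor_1^A(A_k,F)=0$ by the previous paragraph, so $F$ is flat over $A$. If $\dim R\ge 1$, flatness of $F$ over $A$ may be checked at a maximal ideal $\mathfrak n$ of $A$; put $\fp=\mathfrak n\cap R$. If $\fp\subsetneq\fm$, then $\dim R_\fp<\dim R$ and the inductive hypothesis applied to $(R_\fp,A_\fp,F_\fp)$ shows that $F_\fp$, hence $F_\mathfrak n$, is flat. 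If $\fp=\fm$, then $\fm A_\mathfrak n$ lies in the maximal ideal of the local ring $A_\mathfrak n$; the triple $(R,A_\mathfrak n,F_\mathfrak n)$ still satisfies the three hypotheses — indeed $F_\mathfrak n$ is a filtered colimit of copies of $F$, hence flat over $R$, and $F_\mathfrak n\otimes_R k(\fq)$ is a localisation of $F_{k(\fq)}$ — and the local criterion for flatness applied to $F_\mathfrak n$ over $A_\mathfrak n$ finishes the argument, its two inputs (flatness of $F_\mathfrak n/\fm F_\mathfrak n$ over $A_\mathfrak n/\fm A_\mathfrak n$, and $\Tor_1^{A_\mathfrak n}(A_\mathfrak n/\fm A_\mathfrak n,F_\mathfrak n)=0$) being supplied by the flat base change computation applied over $A_\mathfrak n$.

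\emph{Main obstacle.} The delicate point is this last invocation of the local criterion for flatness in the generality stated, where $A$ is merely a flat $R$-algebra: it requires the relevant modules to be ideally separated for the $\fm$-adic topology. This is automatic when $A$ is module-finite over $R$ — the case used in the rest of the paper, where every $A_\mathfrak n$ is Noetherian local — while in general one should first reduce to the case where $A$ is of finite type over $R$; alternatively, once the base is local one may simply quote the fibrewise flatness criterion of \cite[IV, 11.3.10]{EGA}.
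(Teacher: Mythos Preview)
Your flat base change computation is correct and is indeed the engine of the paper's proof as well. The gap lies where you yourself flag it, but it is more serious than you suggest. In the inductive step with $\fp=\fm$ you invoke the local criterion for flatness for the $A_{\mathfrak n}$-module $F_{\mathfrak n}$ along the ideal $\fm A_{\mathfrak n}$. That criterion, in the non-nilpotent case, needs $F_{\mathfrak n}$ to be idealwise separated for the $\fm$-adic topology (equivalently, finitely generated over some Noetherian local $A_{\mathfrak n}$-algebra). Your claim that this is ``automatic when $A$ is module-finite over $R$'' is not right: module-finiteness of $A$ makes $A_{\mathfrak n}$ Noetherian local, but it imposes no finiteness or separation on the arbitrary module $F$, which is exactly what the criterion needs. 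Likewise the fibrewise flatness criterion of EGA~IV, 11.3.10 applies to modules of finite type, so it does not cover an arbitrary $F$ either; and a reduction to $A$ of finite type over $R$ does nothing to tame $F$.

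The paper sidesteps this entirely by running a different induction. Rather than inducting on $\dim R$ and appealing to a flatness criterion, it fixes $F$ and proves directly that $\Tor^A_i(M,F)=0$ for all $i\ge 1$ and every $A$-module $M$, by Noetherian induction on the closed subset of $\Spec R$ supporting $M$. After filtering to reduce to $IM=0$ with $I$ radical, one picks a minimal prime $\fp\in V(I)$ and uses the four-term exact sequence $0\to M'\to M\to M_{k(\fp)}\to M''\to 0$; the middle term is handled by precisely your base change identity $\Tor^A_i(M_{k(\fp)},F)\cong\Tor^{A_{k(\fp)}}_i(M_{k(\fp)},F_{k(\fp)})=0$, while $M'$ and $M''$ have strictly smaller support. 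This d\'evissage uses only that $\Tor$ commutes with filtered colimits, so it goes through for arbitrary $F$ with no separation hypothesis. If you want to repair your argument, the cleanest route is to abandon the local criterion and instead bootstrap your vanishing $\Tor^A_i(F,N)=0$ for $N$ killed by $\fm$ to all finitely generated $N$ by exactly this kind of support induction.
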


\begin{proof}
The argument  follows Deligne's proof of
\cite[Prop. 5.5.23]{Deligne:1972b}.

We begin with a general observation. For any ideal $I$ of $R$ we write
\[
V(I)=\{\fp\in\Spec R\mid I\subseteq \fp\}
\] 
for the corresponding closed subset. Then for $V=V(I)$ and any $A$-module $M$ its submodule
of sections 
\[
\gam_V M=\{x\in M\mid I^n x=0\text{ for some }n>0\}
\]
equals the maximal $A$-submodule $N\subseteq M$ \emph{supported on $V$}, that is
$N_\fp=0$ for $\fp\not\in V$. Note that $M=\colim_V\gam_V M$ where
$V$ runs through all closed subsets of $\Spec R$.

We prove by noetherian induction on closed subsets $V$ of $\Spec R$
that for each (right) $A$-module $M$ supported on $V$ one has
\[
\Tor^A_i(M,F) =0 \quad\text{for $i\ge 1$.}
\]
Say $V=V(I)$ for some reduced ideal $I$ in $R$. For each integer $n\ge 0$ the subset 
\[
M_n \colonequals \{x\in M\mid I^nx=0\}
\]
is an $A$-submodule of $M$, and since $M$ is supported on $V$ we get that
$M=\bigcup_{n\ge 0} M_n$.
Since $\Tor^A_i(M,F) = \colim_n \Tor^A_i(M_n,F)$, it suffices to verify the desired vanishing for $M$ such that $I^n M=0$. Then $M$ has a finite filtration 
\[
0 = I^n M \subseteq I^{n-1} M \subseteq \cdots \subseteq IM \subseteq M
\]
by $A$-submodules and each subquotient is annihilated by $I$. Thus we can replace $M$ by one of these subquotients and assume $IM=0$. Let $\fp$ be a prime ideal in $R$ minimal in $V(I)$, and consider the exact sequence
\begin{equation}
\label{eq:4-term}
0\lra M' \lra M \xra{\ \iota\ } M_{k(\fp)} \lra M''\lra 0.
\end{equation}
Observe that $\iota$ is $A$-linear so the exact sequence is one of
$A$-modules. Since $I$ is reduced and $\fp$ is minimal over $I$, the
map $\iota_\fp$ is bijective, and hence $M'_\fp=0=M''_\fp$. In
particular $M'$ and $M''$ can be expressed  as filtered colimits of $A$-submodules supported on proper closed subsets $W\subset V$. Thus the induction hypothesis yields
\[
\Tor^A_i(M',F)=0 = \Tor^A_i(M'',F)\quad \text{for $i\ge 1$.}
\]
Moreover since $A$ and $F$ are flat over $R$ one has the first isomorphism below
\[
\Tor^A_i(M_{k(\fp)}, F) \cong \Tor^{A_{k(\fp)}}_i(M_{k(\fp)},F_{k(\fp)}) = 0 \quad\text{for $i \ge 1$.}
\]
The equality holds by the hypotheses on the flatness of $F_{k(\fp)}$. These computations and \eqref{eq:4-term} yield the desired vanishing.
\end{proof}

A ring $\Lambda$ is called \emph{self-injective} if injective and projective $\Lambda$-modules coincide. Then the opposite ring
$\Lambda^\op$ is also self-injective and the ring is artinian; cf.~\cite[Example~3.3.4]{Krause:2022a} for details. The importance of the result below is that, for algebras finite projective over a noetherian commutative ring, the Frobenius property is detected on fibres. In \cite{Benson/Iyengar/Krause/Pevtsova:2022b} such an $A$ is said to be \emph{fibrewise self-injective}; in this work, we prefer to call them Frobenius algebras, in view of Lemma~\ref{le:Frobenius}.

\begin{lemma}
\label{le:frobenius-self-injective}
Let $R$ be a noetherian commutative ring.  A finite projective $R$-algebra $A$ is Frobenius if and only if $\fibre A{\fp}$ is self-injective for each $\fp$ in $\supp_R A$.
\end{lemma}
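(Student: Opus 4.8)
The statement asserts that, for a finite projective $R$-algebra $A$ over a noetherian commutative ring $R$, being Frobenius is equivalent to the self-injectivity of each fibre $\fibre A{\fp}$ for $\fp \in \supp_R A$. By Lemma~\ref{le:Frobenius}, the Frobenius condition is the equality $\add\omega_{A/R} = \add A$ of additive subcategories of $\Mod A$, where $\omega_{A/R} = \Hom_R(A,R)$. The plan is to reduce both conditions to the same local statement about projectivity of $\omega_{A/R}$ as an $A$-module, using Lemma~\ref{le:Deligne} as the crucial descent tool.

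\emph{From fibrewise self-injectivity to the Frobenius property.} Suppose $\fibre A{\fp}$ is self-injective for all $\fp \in \supp_R A$. The first step is to observe that $\omega_{A/R}$ is always finitely generated and projective as an $R$-module (it is the $R$-dual of a finite projective $R$-module), hence it lies in $\rmod(A,R)$, and likewise for $A$. Next I would show $\omega_{A/R}$ is projective as an $A$-module. For this, note that base change commutes with $\Hom_R(A,-)$ since $A$ is finite projective over $R$: there is a natural isomorphism $(\omega_{A/R})_{k(\fp)} \cong \Hom_{k(\fp)}(\fibre A{\fp}, k(\fp)) = \omega_{\fibre A{\fp}/k(\fp)}$, the dualising bimodule of the fibre. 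When $\fibre A{\fp}$ is self-injective, this fibre is an injective, hence projective, $\fibre A{\fp}$-module; when $\fp \notin \supp_R A$ the fibre $\fibre A{\fp}$ is zero so the statement is vacuous. Thus $(\omega_{A/R})_{k(\fp)}$ is a projective, and in particular flat, $\fibre A{\fp}$-module for every $\fp$. Since $A$ is flat over $R$ and $\omega_{A/R}$ is flat (indeed projective) over $R$, Lemma~\ref{le:Deligne} applies and shows $\omega_{A/R}$ is flat as an $A$-module; being also finitely generated over the noetherian-over-$R$ (hence noetherian) ring $A$, it is finitely presented, so flat implies projective. An entirely symmetric argument with $A^\op$ in place of $A$—using $D$ from \eqref{eq:duality} and the fact that the fibres of $A^\op$ are the opposite rings of the fibres of $A$, which are self-injective iff the original fibres are—shows $A \cong \omega_{A^\op/R}^{\,\op\!}$ arguments give that $A$ is projective as a module over... more directly: once $\omega_{A/R}$ is a finitely generated projective $A$-module, I must upgrade this to the equality $\add\omega_{A/R} = \add A$. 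One inclusion is to see that $A$ is a summand of a finite sum of copies of $\omega_{A/R}$ and vice versa; this can be checked after base change to each fibre, where it holds because over a self-injective artinian ring $\fibre A{\fp}$ the regular module and its dual $\omega_{\fibre A{\fp}/k(\fp)}$ generate the same additive category (both are progenerators, having the same indecomposable projective-injective summands up to multiplicity). Reducing the additive-equivalence statement to fibres is legitimate because for finitely generated projective $A$-modules $P, Q$, one has $P \in \add Q$ iff $\fibre P{\fp} \in \add \fibre Q{\fp}$ for all $\fp \in \Spec R$, by a standard Nakayama-and-patching argument (the relevant $\Hom$-modules are finitely generated over $R$, and a surjection $Q^n \to P$ with split kernel can be detected and then lifted fibrewise).

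\emph{From the Frobenius property to fibrewise self-injectivity.} This direction is easier: if $\add\omega_{A/R} = \add A$, then $\omega_{A/R}$ is a finitely generated projective $A$-module, and base change gives that $\omega_{\fibre A{\fp}/k(\fp)} \cong (\omega_{A/R})_{k(\fp)}$ is a finitely generated projective $\fibre A{\fp}$-module for every $\fp \in \supp_R A$. Now $\omega_{\fibre A{\fp}/k(\fp)}$ is the injective cogenerator of $\Mod \fibre A{\fp}$ realised as $\Hom_{k(\fp)}(\fibre A{\fp}, k(\fp))$; concretely $\add \omega_{\fibre A{\fp}/k(\fp)}$ is the category of injective $\fibre A{\fp}$-modules, so the inclusion $\add\omega_{\fibre A{\fp}/k(\fp)} \subseteq \add \fibre A{\fp}$ obtained by base-changing $\add\omega_{A/R} \subseteq \add A$ says every injective is projective; the reverse inclusion follows by base-changing $\add A \subseteq \add\omega_{A/R}$, or alternatively by applying $D_{k(\fp)}$ and the symmetry already noted. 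Hence $\fibre A{\fp}$ is self-injective.

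\textbf{Main obstacle.} The delicate point is the passage from ``$\omega_{A/R}$ is a finitely generated projective $A$-module'' to the genuine equality $\add\omega_{A/R} = \add A$: projectivity alone does not force the two additive closures to coincide, since $A$ could in principle have projective modules not appearing in $\add\omega_{A/R}$. Resolving this requires the fibrewise comparison of progenerators over self-injective artinian rings together with the descent of $\add$-membership for finitely generated projective $A$-modules along the family of fibre functors; the latter is where the noetherian hypothesis on $R$ and the finiteness of $A$ over $R$ are genuinely used, and it is the step I would write out most carefully. The application of Lemma~\ref{le:Deligne} is the technical heart that makes the flatness-to-projectivity argument go through, but it is essentially ready to use as stated.
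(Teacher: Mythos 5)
Your argument is correct, but at the decisive step you diverge from the paper. Both you and the paper begin by applying Lemma~\ref{le:Deligne} to conclude that $\omega_{A/R}$ is a flat, hence (being finitely presented over the noetherian ring $A$) projective, $A$-module; so $\omega_{A/R}\in\add A$. The paper then finishes in one stroke: ``using duality, we deduce that $A\in\add\omega_{A/R}$.'' The intended reading is that the same Deligne argument applied to $A^\op$ shows $\omega_{A/R}$ is projective also as a right $A$-module, whence the equivalence $D$ of \eqref{eq:duality}, which swaps $A\leftrightarrow\omega_{A/R}$, carries $\omega_{A/R}\in\add A$ (right modules) to $A\in\add\omega_{A/R}$ (left modules). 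You begin to run exactly this dual argument but abandon it midway (the sentence ending ``$A$ is projective as a module over\ldots'' is garbled and says nothing true or useful), and instead establish $\add\omega_{A/R}=\add A$ by a fibrewise descent of $\add$-membership for finitely generated projectives: the cokernel of the trace/evaluation map $\Hom_A(Q,P)\otimes_A Q\to P$ is a finitely generated $R$-module whose vanishing can be tested on fibres, and for $Q$ finite projective over $A$ this construction commutes with $-\otimes_R k(\fp)$. That argument is sound and self-contained, and it has the virtue of not requiring the left/right symmetry of self-injectivity, but it is longer and the key reduction is only sketched; the paper's duality trick is the shorter and cleaner route. Your treatment of the other direction and of the identification $(\omega_{A/R})_{k(\fp)}\cong\omega_{\fibre{A}{\fp}/k(\fp)}$ agrees with the paper's.
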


\begin{proof}
Since $A$ is finite projective over $R$,  for each $\fp$ in  $\Spec R$ one has an isomorphism of $\fibre A{\fp}$-bimodules:
\[
\omega_{A/R}\otimes_R k(\fp) \cong \omega_{{\fibre A{\fp}}/k(\fp)}\,.
\] 
An algebra $\Lambda$ that is finite dimensional over a field $k$ is self-injective if and only if $\add \omega_{\Lambda/k}=\add \Lambda$. Thus the isomorphism above and Lemma~\ref{le:Frobenius} yield that when $A$ is Frobenius, the ring $\fibre A{\fp}$ is self-injective. 

Suppose $\fibre A{\fp}$ is self-injective for each $\fp$ in $\Spec R$. Then the $\fibre A{\fp}$-module $\omega_{A/R}\otimes_R k(\fp)$ is projective for each $\fp$, by the isomorphism above.  For finitely presented modules flatness and projectivity agree, so Lemma~\ref{le:Deligne} yields that $\omega_{A/R}$ is a projective $A$-module, that is to say,  it is in $\add A$. Using duality, we deduce that 
$A$ is in $\add \omega_{A/R}$. Thus the $R$-algebra   $A$ is Frobenius, by Lemma~\ref{le:Frobenius}.
\end{proof}

\begin{lemma}
\label{le:syzygies}
Let $R$ be a noetherian commutative ring, $A$ a  Frobenius $R$-algebra, and $M$ an $A$-module that is projective over $R$. If $M$ is flat over $A$, then so are the $A$-modules $\Sigma M$ and $\Omega M$.
\end{lemma}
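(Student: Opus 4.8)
The plan is to reduce the flatness of $\Sigma M$ and $\Omega M$ over $A$ to a statement about the fibres $\fibre{A}{\fp}$, and there to exploit that $\fibre{A}{\fp}$ is self-injective. First observe that $\Sigma M$ and $\Omega M$ lie in $\Mod(A,R)$ by construction, hence are projective, and in particular flat, over $R$; and $A$ is projective, so flat, over $R$ as well. Thus Lemma~\ref{le:Deligne} applies to $\Sigma M$ and to $\Omega M$, and the claim reduces to showing that $(\Sigma M)_{k(\fp)}$ and $(\Omega M)_{k(\fp)}$ are flat over $\fibre{A}{\fp}$ for every $\fp \in \Spec R$.

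Fix $\fp$ (we may assume $\fp\in\supp_R A$, the other primes being trivial). The key local input is that $M_{k(\fp)}$ is not merely flat, but simultaneously \emph{projective and injective} over $\fibre{A}{\fp}$. Indeed $M_{k(\fp)} = M\otimes_A \fibre{A}{\fp}$ is a base change of the flat $A$-module $M$, hence flat over $\fibre{A}{\fp}$; and $\fibre{A}{\fp}$ is an artinian self-injective ring by Lemma~\ref{le:frobenius-self-injective}, so over it flat, projective and injective modules coincide.

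Next I would base-change the two defining exact sequences \eqref{eq:Sigma-Omega} along $R\to k(\fp)$. Since $M$, $\Sigma M$ and $\Omega M$ are all flat over $R$, the obstructing groups $\Tor^R_1(-,k(\fp))$ vanish, so these base changes stay short exact; and since $A$ is finitely generated projective over $R$, the functors $u_\lambda=A\otimes_R-$ and $u_\rho=\Hom_R(A,-)$ commute with $-\otimes_R k(\fp)$. One thus obtains short exact sequences of $\fibre{A}{\fp}$-modules
\[
0 \to M_{k(\fp)} \to \Hom_{k(\fp)}\bigl(\fibre{A}{\fp},\, u(M)_{k(\fp)}\bigr) \to (\Sigma M)_{k(\fp)} \to 0
\]
and
\[
0 \to (\Omega M)_{k(\fp)} \to \fibre{A}{\fp}\otimes_{k(\fp)} u(M)_{k(\fp)} \to M_{k(\fp)} \to 0 .
\]
The middle term of the second sequence is a free $\fibre{A}{\fp}$-module, and that of the first is a sum of copies of $\omega_{\fibre{A}{\fp}/k(\fp)}$, which is projective over $\fibre{A}{\fp}$ because $\fibre{A}{\fp}$ is self-injective. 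Since $M_{k(\fp)}$ is injective over $\fibre{A}{\fp}$ the first sequence splits, and since $M_{k(\fp)}$ is projective the second one splits; hence $(\Sigma M)_{k(\fp)}$ and $(\Omega M)_{k(\fp)}$ are direct summands of projective $\fibre{A}{\fp}$-modules, so in particular flat over $\fibre{A}{\fp}$. By Lemma~\ref{le:Deligne}, $\Sigma M$ and $\Omega M$ are flat over $A$.

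The step I expect to be the crux is the one concerning $\Sigma M$: a direct $\Tor$-calculation over $A$ already shows that $\Omega M$ is flat as soon as $M$ is, since $\Omega M$ is the kernel of a surjection between flat $A$-modules; but $\Sigma M$ is a \emph{cokernel}, and cokernels of monomorphisms between flat modules need not be flat. It is exactly to deal with $\Sigma M$ that one must descend to the fibres, where the self-injectivity of $\fibre{A}{\fp}$ forces the relevant short exact sequence to split.
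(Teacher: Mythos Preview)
Your proof is correct and takes a genuinely different route from the paper's. The paper argues globally via Lazard's theorem: write the flat $A$-module $M$ as a filtered colimit of finitely generated projective $A$-modules, observe that on such modules $\Sigma$ and $\Omega$ return projectives (since $\rmod(A,R)$ is Frobenius), and conclude because both functors preserve filtered colimits---for $\Sigma$ this uses that $u_\rho=\Hom_R(A,-)$ does, which holds as $A$ is finitely generated over $R$. Your approach instead anticipates the fibrewise logic of Proposition~\ref{pr:flat}: you push the question to each fibre via Lemma~\ref{le:Deligne}, where self-injectivity of $\fibre{A}{\fp}$ together with the fact that flat modules over an artinian ring are projective forces the base-changed sequences to split. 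The paper's argument is shorter and avoids touching the fibres at all; yours makes transparent why the Frobenius hypothesis is needed precisely for $\Sigma M$, as your final paragraph correctly isolates.
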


\begin{proof}
Clearly, when the $A$-module $M$ is finitely generated, if it is projective, so are $\Sigma M$ and $\Omega M$. A module is flat if and only if it is a filtered colimit of finitely generated projective modules.  Thus the assertion follows from the fact that the functors $\Sigma$ and $\Omega$ defined via the exact sequences \eqref{eq:Sigma-Omega} preserve filtered colimits.
\end{proof}

\begin{proposition}\label{pr:flat}
Let $R$ be a noetherian commutative ring and $A$ a  Frobenius $R$-algebra. For each $M\in \Mod(A,R)$ the following conditions are equivalent.
\begin{enumerate}[\quad\rm(1)]
\item $M$ is a flat $A$-module. 
\item $M$ is a projective $A$-module. 
\item $M_{k(\fp)}$ is a projective $A_{k(\fp)}$-module for each $\fp\in\Spec R$. 
\item $M_{k(\fp)}$ is a flat $A_{k(\fp)}$-module for each $\fp\in\Spec R$. 
\end{enumerate}
\end{proposition}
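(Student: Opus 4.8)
The plan is to prove all four conditions equivalent by running through the cycle $(2)\Rightarrow(3)\Rightarrow(4)\Rightarrow(1)\Rightarrow(2)$. Three of these links are essentially free. For $(2)\Rightarrow(3)$: if $M$ is projective over $A$, then its base change $M_{k(\fp)}$ is projective over $A_{k(\fp)}$, since base change preserves projectivity. For $(3)\Rightarrow(4)$: projective modules are flat. For $(4)\Rightarrow(1)$: I would invoke Lemma~\ref{le:Deligne} with $F=M$, which applies because $A$ is flat over $R$ (it is finite projective) and $M$ is flat over $R$ (it is projective over $R$); the lemma then yields that $M$ is flat over $A$.

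The content of the proposition is the remaining implication $(1)\Rightarrow(2)$, namely that a module in $\Mod(A,R)$ which is flat over $A$ is already projective over $A$. The plan here is to work with the complete resolution. Fix such an $M$ and choose a complete resolution $\bft M$, which exists since $A$ is a Frobenius $R$-algebra: it is an acyclic complex of projective $A$-modules with $\Coker(d^{-1}_{\bft M})\cong M$. The key observation is that every cycle module of $\bft M$ is isomorphic to an iterated syzygy $\Omega^nM$ or cosyzygy $\Sigma^nM$ of $M$ (with $\Omega^0M=\Sigma^0M=M$), and that each of these stays projective over $R$: the defining sequences \eqref{eq:Sigma-Omega} are split over $R$, so $\Omega M$ is an $R$-direct summand of $u_\lambda u(M)$ and $\Sigma M$ an $R$-direct summand of $u_\rho u(M)$, both of which are projective over $R$, and one then iterates. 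Therefore Lemma~\ref{le:syzygies} applies at each stage and shows that every cycle module of $\bft M$ is flat over $A$. In other words $\bft M$ is a pure acyclic complex of projective $A$-modules, each short exact sequence $0\to Z^n\to\bft M^n\to Z^{n+1}\to 0$ extracted from it being pure since its cokernel is flat. Finally I would appeal to the fact that a pure acyclic complex of projective modules is contractible; this forces all cycle modules of $\bft M$, and in particular $M$, to be direct summands of projective modules, hence projective over $A$.

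The crux, and the only nontrivial point, is this last implication $(1)\Rightarrow(2)$. Flatness over $A$ --- a ring merely module-finite over a noetherian ring --- does not imply projectivity in general, and what makes the argument work is the combination of two features: that $M$ is projective over $R$, which propagates to all its syzygies and cosyzygies and so allows Lemma~\ref{le:syzygies} to be reapplied indefinitely, and that $A$ is Frobenius, which furnishes a two-sided acyclic complex of projectives passing through $M$ and thereby converts the problem into the known contractibility of pure acyclic complexes of projective modules. The one thing to watch in carrying this out is to confirm, at each step, that the syzygy or cosyzygy just produced is still projective over $R$ before reapplying Lemma~\ref{le:syzygies}.
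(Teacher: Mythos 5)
Your proof is correct and takes essentially the same route as the paper's: the same cyclic chain of implications, Lemma~\ref{le:Deligne} for (4)$\Rightarrow$(1), and Lemma~\ref{le:syzygies} to propagate flatness over $A$ through the cycles of the complete resolution for (1)$\Rightarrow$(2). The contractibility of a pure acyclic complex of projectives that you invoke at the end is exactly what the paper extracts from Benson--Goodearl \cite[Theorem~2.5]{Benson/Goodearl:2000a} by summing all the cycles into a single periodic flat module $\bigoplus_{n\in\mathbb Z}\Sigma^n M$ sitting in a short exact sequence with projective middle term, so you should back that step with the same citation.
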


\begin{proof}
  (1) $\Rightarrow$ (2) It follows from Lemma~\ref{le:syzygies}
  that the flat modules in $\Mod(A,R)$ are closed under the
  suspension $\Sigma$ and its stable inverse $\Sigma^{-}\colonequals\Omega$. 
  For  each $n\in\bbZ$ we obtain an exact sequence
  $0\to \Sigma ^n M\to P_n\to \Sigma ^{n+1} M\to 0$ such that $P_n$ is
  projective, by setting
  \[\Sigma ^n M\colonequals
    \begin{cases}
      \Sigma(\Sigma^{n-1} M)&n>0,\\
      M&n=0,\\
      \Sigma^-(\Sigma^{n+1} M)&n<0.
\end{cases}\] These yield an exact sequence
\[0\lra \bigoplus_{n\in\mathbb Z}\Sigma ^n M\lra
  \bigoplus_{n\in\mathbb Z} P_n\lra \bigoplus_{n\in\mathbb Z} \Sigma
  ^{n} M\lra 0\,.\] Thus the module
$\bar M= \bigoplus_{n\in\mathbb Z}\Sigma ^n M$ is flat and periodic. Then it follows from
\cite[Theorem~2.5]{Benson/Goodearl:2000a} that $\bar M$ projective, and in
particular $M=\Sigma^0 M$ is projective.

(2) $\Rightarrow$ (3) $\Rightarrow$ (4) Clear.  

(4) $\Rightarrow$ (1) This follows from Lemma~\ref{le:Deligne}.
\end{proof}

\begin{proposition}\label{pr:conserve}
Let $R$ be a noetherian commutative ring and $A$ a  Frobenius $R$-algebra. For an object $X$
  in $ \bfK(\Prj A,R)$ we have $X=0$  if and only if $X_{k(\fp)} = 0$
  in $\bfK(\Prj A_{k(\fp)})$ for all $\fp \in \Spec R$.
\end{proposition}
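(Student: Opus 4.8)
The plan is to reduce the statement to a purely module-theoretic claim about the terms of the complex. Since $X$ lies in $\bfK(\Prj A,R)$, each term $X^n$ is a projective $A$-module, hence certainly projective — and in particular flat — over $R$. If $X_{k(\fp)} = 0$ in $\bfK(\Prj A_{k(\fp)})$ for every $\fp$, then in particular $X_{k(\fp)}$ is an acyclic complex for every $\fp$; as it is a complex of projective $A_{k(\fp)}$-modules over a field, being acyclic forces it to be contractible, so certainly $H^\ast(X_{k(\fp)}) = 0$. The first step is therefore to show $X$ itself is acyclic (as a complex of $R$-modules, equivalently as a complex of $A$-modules, since acyclicity is checked after forgetting to $R$). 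This is a standard consequence of flatness: the homology modules $H^n(X)$ need not be finitely generated, but one can argue locally. If some $H^n(X) \ne 0$, pick $\fp$ in its support; since the terms $X^m$ are flat over $R$, base change along $R \to R_\fp \to k(\fp)$ is only ``half-exact'' in general, so one must be slightly careful. The cleanest route is to first localise at $\fp$ and then use that $X^m$ is flat to get a spectral sequence or a direct argument showing $H^n(X)_\fp \otimes_{R_\fp} k(\fp)$ is a subquotient of $H^n(X_{k(\fp)})$ — more precisely, the universal coefficient / base-change exact sequence gives a surjection $H^n(X) \otimes_R k(\fp) \twoheadrightarrow \ker(\cdots)$ and an injection from it, and in any case if $H^n(X)_\fp \ne 0$ then by Nakayama $H^n(X)_\fp \otimes k(\fp) \ne 0$, which injects into $H^n(X_{k(\fp)}) = 0$, a contradiction. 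Hence $X$ is acyclic in $\Mod A$.

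The second step uses the hypothesis that $X \in \bfK(\Prj A, R)$, i.e.\ $X$ is $K$-projective over $R$. An acyclic, $K$-projective complex over $R$ is contractible over $R$; this is exactly the observation made in the paper just before Lemma~\ref{le:KProj-ac} (``if a complex is acyclic in $\Mod A$ and also $K$-projective over $R$, then it is contractible over $R$''). So $X$ lies in $\mathbf{Ac}(\Prj A,R)$. By Lemma~\ref{le:KProj-ac}, the equivalence $\mathbf{Ac}(\Prj A,R) \iso \StMod(A,R)$ sends $X$ to $M \colonequals \Coker(d^{-1}_X)$, and $X = 0$ in $\bfK(\Prj A,R)$ if and only if $M = 0$ in $\StMod(A,R)$, i.e.\ if and only if $M$ is a projective $A$-module. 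Now $M$ is a submodule-quotient built from the projective terms $X^n$; since $X$ is acyclic and the $X^n$ are projective over $R$, each cocycle $\Ker d^n_X$ is a kernel of a surjection of $R$-projective modules hence is $R$-projective, and $M = \Ker d^0_X$ (up to relabelling) is projective over $R$, so $M \in \Mod(A,R)$.

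The third and final step applies Proposition~\ref{pr:flat}: the $A$-module $M$ is projective if and only if $M_{k(\fp)}$ is a projective $A_{k(\fp)}$-module for every $\fp \in \Spec R$. But $M_{k(\fp)} = \Coker(d^{-1}_{X_{k(\fp)}})$ because $X^n$ is flat over $R$ so base change commutes with taking cokernels, and since $X_{k(\fp)} = 0$ in $\bfK(\Prj A_{k(\fp)})$, the complex $X_{k(\fp)}$ is contractible, hence $M_{k(\fp)}$ is a projective $A_{k(\fp)}$-module. By Proposition~\ref{pr:flat}, $M$ is projective over $A$, so $M = 0$ in $\StMod(A,R)$, and therefore $X = 0$ in $\bfK(\Prj A,R)$. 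The converse direction is trivial since the fibre functor is additive and sends $0$ to $0$.

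The main obstacle I anticipate is the first step — controlling the (possibly non-finitely-generated) homology of $X$ under base change to residue fields. The danger is that $- \otimes_R k(\fp)$ is not exact, so vanishing of $H^\ast(X_{k(\fp)})$ does not immediately give vanishing of $H^\ast(X) \otimes_R k(\fp)$. The resolution is to exploit that every term $X^n$ is flat (indeed projective) over $R$: flatness of the terms makes the universal-coefficient comparison behave, and a Nakayama argument after localising handles the rest. Once acyclicity of $X$ is in hand, the remaining steps are essentially bookkeeping with the equivalences already established in Section~\ref{se:Frobenius} together with Proposition~\ref{pr:flat}.
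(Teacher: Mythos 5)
Your proposal follows the same skeleton as the paper's proof --- show $X$ is acyclic, pass through the equivalence $\mathbf{Ac}(\Prj A,R) \simeq \StMod(A,R)$ of Lemma~\ref{le:KProj-ac}, and finish with Proposition~\ref{pr:flat} --- and your second and third steps are correct. The gap, which you rightly flagged as the main obstacle, is in step one, and your sketched resolution does not work. The homology modules $H^n(X)$ are in general not finitely generated over $R$, so Nakayama is unavailable, and the claim ``if $H^n(X)_\fp \ne 0$ then by Nakayama $H^n(X)_\fp \otimes_{R_\fp} k(\fp) \ne 0$'' is false. For instance, over $R = \bbZ_{(p)}$ the module $M = \bbQ/\bbZ_{(p)}$ is nonzero, $\fp$-local with $\fp = (p)$, and supported at $(p)$, yet $M \otimes_R k(\fp) = M/pM = 0$ because $M$ is $p$-divisible. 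The universal-coefficient fallback does not close the gap either: that short exact sequence relates $H^n(X\otimes_R k)$ to $H^n(X)\otimes_R k$ together with a $\Tor_1$ term involving a \emph{neighbouring} homology group, so vanishing of $H^n(X\otimes_R k)$ only yields $H^n(X)\otimes_R k = 0$, which as the example shows does not force $H^n(X)=0$; and for unbounded complexes the convergence of the associated spectral sequence requires care in any case.

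What is actually needed --- and what the paper cites --- is a theorem of Neeman \cite[Lemma~2.12]{Neeman:1992a}: for $R$ noetherian and $Y \in \dcat R$, if $Y\lotimes_R k(\fp)=0$ for all $\fp\in\Spec R$, then $Y=0$. This is a genuinely global statement whose proof rests on the structure of localising subcategories of $\dcat R$, not on a pointwise Nakayama argument. Since $X$ is K-projective over $R$, the ordinary tensor $X\otimes_R k(\fp)$ computes the derived one, so Neeman's lemma applied to $X$ viewed in $\dcat R$ gives that $X$ is acyclic. With that substituted for your first step, the rest of your argument goes through and coincides with the paper's proof.
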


\begin{proof}
The `only if' direction is clear. So suppose $X_{k(\fp)} = 0$ for all
  $\fp \in \Spec R$.  Applying the functor $\bfp $ from
  Proposition~\ref{pr:KProj-loc-seq} we obtain an object $\bfp X$ in
  the derived category $\dcat A$ such that $\bfp X \otimes_R k(\fp)$ is
  acyclic for all $\fp \in \Spec R$. Considering $\bfp X$ as an
  object in $\dcat R$ and applying a result of Neeman
  \cite[Lemma 2.12]{Neeman:1992a} we conclude that $\bfp (X)$ is acyclic. Hence
  $X$ is homotopy equivalent to an object in $\mathbf{Ac}(\Prj A,R)$.
  Since the latter is equivalent to $\StMod(A,R)$ by
  Lemma~\ref{le:KProj-ac}, the statement now follows from
  Proposition~\ref{pr:flat}.
\end{proof}

\section{Finite flat group schemes}
\label{se:groups}

From now on $R$ is a  noetherian commutative ring and $G$ a finite flat group scheme over $R$. The coordinate algebra $R[G]$ is a finitely generated projective $R$-module and a commutative Hopf $R$-algebra. The group algebra $RG = \Hom_R(R[G],R)$ is then  a finitely generated projective $R$-module and a cocommutative Hopf algebra. Hence, $RG$ is a Frobenius $R$-algebra, so the results of the previous sections apply. Examples of finite flat groups schemes are given  at the end of this section.

There is an equivalence of categories between $R$-linear representations of $G$ and $RG$-modules. Since $RG$ is a Hopf algebra there is more structure associated to its various module categories. This is the topic of this section.  In what follows, by a $G$-module we mean an $RG$-module, and we write $\rmod(G,R)$ for $\rmod(RG,R)$,   $\Prj G$ instead of $\Prj RG$, and so forth.

\subsection*{Tensor structure}
Given $G$-modules $X$ and $Y$, there is a natural diagonal $G$-module structure on $X\otimes_RY$, obtained by restricting its
$(G\times G)$-module structure along the coalgebra map $\Delta\colon RG\to RG\otimes_RRG$.

\begin{lemma}
Let $P,Q$ be $G$-modules. If the $G$-module $P$ is projective and $Q$ is projective over $R$, then the $G$-module $P\otimes_RQ$ is projective.
\end{lemma}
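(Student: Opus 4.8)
The plan is to reduce to the universal case $P = RG$ and then apply the Hopf-algebraic ``untwisting'' isomorphism, exactly as one does over a field; nothing in that argument uses more than the fact that $RG$ is a Hopf $R$-algebra with an antipode, which holds whenever $G$ is a finite flat group scheme.

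\emph{Step 1: reduction to $P = RG$.} A projective $G$-module $P$ is a direct summand of a free module $RG^{(I)}$ for some index set $I$. Since $-\otimes_R Q$ is additive and preserves arbitrary coproducts, $P\otimes_R Q$ is a direct summand of $RG^{(I)}\otimes_R Q\cong (RG\otimes_R Q)^{(I)}$ as $G$-modules (the diagonal action on the left decomposes componentwise). As direct summands of projective $G$-modules are projective, it suffices to prove that $RG\otimes_R Q$ is a projective $G$-module, where $RG$ carries the left regular action and $RG\otimes_R Q$ the diagonal action along $\Delta$.

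\emph{Step 2: untwisting.} Write $Q_0$ for $Q$ regarded as a $G$-module with trivial action, and let $\sigma$ denote the antipode of $RG$. Consider the $R$-linear maps
\[
\phi\colon RG\otimes_R Q_0\lra RG\otimes_R Q,\qquad x\otimes q\longmapsto \textstyle\sum x_{(1)}\otimes x_{(2)}q,
\]
where the source has $G$ acting only on the first tensor factor and the target has the diagonal action, together with $\psi(x\otimes q)=\sum x_{(1)}\otimes \sigma(x_{(2)})q$ going the other way. A direct Sweedler-notation computation using coassociativity, the counit axiom, and the antipode axiom shows that $\phi$ and $\psi$ are mutually inverse and that $\phi$ is $G$-linear; hence $RG\otimes_R Q\cong RG\otimes_R Q_0$ as $G$-modules.

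\emph{Step 3: conclusion.} Since $Q$ is projective over $R$, it is a direct summand of a free module $R^{(J)}$, so $RG\otimes_R Q_0$ is a direct summand of $RG\otimes_R R^{(J)}\cong RG^{(J)}$, a free $G$-module. Therefore $RG\otimes_R Q\cong RG\otimes_R Q_0$ is a projective $G$-module, and by Step~1 so is $P\otimes_R Q$. The only computation in the argument is the verification in Step~2, which is routine Hopf-algebra bookkeeping; the point worth emphasising — and the only place one must be slightly careful — is that it goes through verbatim over an arbitrary commutative ring $R$ precisely because $RG$ is an honest Hopf algebra, equipped with an antipode, for any finite flat group scheme $G$.
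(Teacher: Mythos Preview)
Your proof is correct, but it takes a different route from the paper's. The paper dispatches the lemma in one line via the internal tensor--Hom adjunction
\[
\Hom_{G}(P\otimes_RQ,-)\cong \Hom_G(P,\Hom_R(Q,-)),
\]
valid because $RG$ is a Hopf algebra: since $Q$ is $R$-projective the inner functor is exact, and since $P$ is $G$-projective the outer one is, so $P\otimes_R Q$ is $G$-projective. Your argument instead makes the Hopf structure visible by reducing to $P=RG$ and invoking the explicit untwisting isomorphism $RG\otimes_R Q\cong RG\otimes_R Q_0$. The two proofs are closely related under the hood---the adjunction itself is built from the antipode in much the same way as your map $\phi$---but the paper's version is quicker and avoids the Sweedler bookkeeping, while yours has the mild advantage of exhibiting $P\otimes_R Q$ concretely as a summand of a free $G$-module rather than merely verifying exactness of a Hom functor.
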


\begin{proof}
This follows from the standard adjunction isomorphism 
\[
\Hom_{G}(P\otimes_RQ,-)\cong \Hom_G(P, \Hom_R(Q,-))\,. \qedhere
\]
\end{proof}

The preceding result implies that $\otimes_R$ induces a tensor product on  $\bfK(\Prj G)$, the homotopy category of projective $G$-modules.

\begin{lemma}
\label{le:tensor-unit}
The triangulated category $\bfK(\Prj G)$ is tensor triangulated, with product $\otimes_R$ and unit the injective resolution $\bfi R$ in $\rmod(G,R)$.
\end{lemma}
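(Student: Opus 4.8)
The plan is to transport the symmetric monoidal structure from $\Mod G$ with its diagonal $\otimes_R$, check that $\bfK(\Prj G)$ is closed under it and inherits all the formal structure, and then identify the unit — the only substantial point.

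First, by the preceding lemma, if $P$ is a projective $G$-module and $Q$ is a $G$-module projective over $R$ — in particular any projective $G$-module, since $RG$ is $R$-projective — then $P\otimes_R Q$ is a projective $G$-module; as arbitrary direct sums of projectives are projective, the total complex $X\otimes_R Y=\mathrm{Tot}^{\oplus}(X^i\otimes_R Y^j)$ again has projective $G$-terms whenever $X,Y\in\bfK(\Prj G)$. Since homotopic maps tensor to homotopic maps, $\otimes_R$ descends to a bifunctor on $\bfK(\Prj G)$; it is a triangulated functor in each variable, because mapping cones and shifts are assembled from the terms and differentials using finite direct sums and so are preserved by $X\otimes_R-$ and $-\otimes_R Y$, and it preserves all coproducts. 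The associativity and symmetry constraints together with their coherence are natural isomorphisms of chain complexes that we inherit verbatim from the symmetric monoidal structure on $\Mod G$ (using that $RG$ is a cocommutative Hopf algebra), so they descend without change.

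It remains to see that $\bfi R$ is a unit. Recall that $\bfi R$ is a bounded-below complex of projective (equivalently injective, since $RG$ is a Frobenius $R$-algebra) $G$-modules, equipped with the augmentation quasi-isomorphism $\alpha\colon R\to\bfi R$ of complexes of $G$-modules. For $X\in\bfK(\Prj G)$, the chain map $\alpha\otimes_R\mathrm{id}_X\colon X=R\otimes_R X\to\bfi R\otimes_R X$ has source and target in $\bfK(\Prj G)$ (by the first paragraph), is natural in $X$, and I claim it is an isomorphism there. Its mapping cone is $C\otimes_R X$ with $C=\mathrm{cone}(\alpha)$ a bounded-below complex of $G$-modules, each of which is $R$-projective; moreover $C$ is contractible as a complex of $R$-modules, since $\alpha$ is a quasi-isomorphism between complexes that are $K$-projective over $R$ (here $\bfi R$ is $K$-projective over $R$ because $RG$ is Frobenius over $R$). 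Fix an $R$-linear contraction $h$ of $C$.

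The crux is to promote this $R$-linear contractibility of $C\otimes_R X$ to a $G$-linear one. For fixed $q$, write the projective $G$-module $X^q$ as a summand of a free module $RG^{(\Lambda)}$; by the tensor identity $C\otimes_R RG\cong C^{\mathrm{triv}}\otimes_R RG$ of complexes of $G$-modules, the complex $C\otimes_R RG^{(\Lambda)}$ is contractible over $G$ via $h\otimes\mathrm{id}$, which is $G$-linear because $G$ acts trivially on the $C$-factor, and hence so is its summand $C\otimes_R X^q$. Choosing $G$-linear contractions $s_q$ of these ``columns'' and setting $s=\bigoplus_q s_q$, one has $d_v s+s d_v=\mathrm{id}$ for the differential $d_v$ induced by $C$; the operator $N=\pm(d_h s+s d_h)$ built from the differential $d_h$ induced by $X$ is locally nilpotent because $C$ is bounded below, so $s'=s\sum_{k\ge 0}(-N)^k$ is a well-defined $G$-linear contraction of $\mathrm{Tot}^{\oplus}(C\otimes_R X)$. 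Thus $\alpha\otimes_R\mathrm{id}_X$ has contractible cone, hence is an isomorphism in $\bfK(\Prj G)$; its inverse gives the left unit constraint, the right one is symmetric, and the unit coherence axioms reduce to identities already valid in $\Mod G$. This promotion from $R$-linear to $G$-linear contractibility is the only genuinely non-formal step; everything else is bookkeeping transported from $\Mod G$.
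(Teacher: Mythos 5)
Your proof is correct and spells out the argument that the paper simply defers to \cite[Proposition~5.3]{Benson/Krause:2008a} (and \cite[Lemma~4.2]{Benson/Iyengar/Krause/Pevtsova:2022b}): the essential point there, too, is that the cone of $\alpha\otimes\mathrm{id}_X$ is $R$-contractible because $C$ is a bounded-below acyclic complex of $R$-projectives, and this contractibility is promoted to a $G$-linear one via the Hopf-algebra tensor identity column by column, then assembled using the bounded-below filtration. One cosmetic remark: $K$-projectivity of $\bfi R$ over $R$ needs only that its terms are $R$-projective (they lie in $\add\omega_{RG/R}$, always $R$-projective), not the Frobenius property; that property is what puts $\bfi R$ in $\bfK(\Prj G)$ in the first place.
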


\begin{proof}
We have seen that $\otimes_R$ provides a tensor product  on $\bfK(\Prj G)$. It remains to verify the assertion about the unit; for this see \cite[Proposition~5.3]{Benson/Krause:2008a}, whose proof extends to finite flat group schemes, and also \cite[Lemma~4.2]{Benson/Iyengar/Krause/Pevtsova:2022b}.
 \end{proof}

\subsection*{Rigidity}
The tensor product on $\bfK(\Prj G)$ induces one on $\lfK(\Prj G,R)$, the subcategory introduced following Lemma~\ref{le:KProj-compact}. By construction, it is compactly generated and our aim is to show that it is also rigidly-compactly generated, in the sense recalled below.

Let $\cat T$ be a compactly generated tensor triangulated category,
with tensor product $\otimes$ and unit $\one$. We assume that $\one$ is
compact. Being a compactly generated tensor triangulated category,
$\cat T$ has an \emph{internal function object}, $\fHom(-,-)$, defined
by the property that
\[
\Hom_{\cat T}(X\otimes Y,Z) \cong \Hom_{\cat T}(X,\fHom(Y,Z))
\]
for $X,Y$ and $Z$ in $\cat T$. There is a natural map
\begin{equation}
\label{eq:rigidity}
\fHom(X,\one)\otimes Y \lra \fHom(X,Y)
\end{equation}
and $X$ is \emph{rigid} if this map is an isomorphism for all
$Y$. Since $\one$ is compact, every rigid object is compact, and one
says that $\cat T$ is rigidly-compactly generated when the converse holds: compact
objects and rigid objects coincide. It is straightforward to verify that this property holds if and only if:
\begin{enumerate}[\quad\rm(1)]
\item
the subcategory of compact objects is closed under $\otimes$, and
\item
\eqref{eq:rigidity} is an isomorphism when $X,Y$ are compact.
\end{enumerate}

We get back to  the tensor triangulated category $\bfK(\Prj G)$ with product $\otimes_R$, where $G$ is finite flat group scheme over $R$.

\begin{proposition}
\label{pr:rigidity}
The tensor product $\otimes_R$ induces on $\lfK(\Prj G, R)$ a tensor-tri\-an\-gu\-lated structure that is rigidly-compactly generated.
\end{proposition}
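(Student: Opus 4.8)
The strategy is to verify the two conditions from the criterion for rigid-compact generation recalled above, applied to $\cat T=\lfK(\Prj G,R)$ with tensor product $\otimes_R$ and unit $\one=\bfi R$: namely (1) the subcategory of compact objects is closed under $\otimes_R$, and (2) the comparison map \eqref{eq:rigidity} is an isomorphism whenever $X,Y$ are compact. By Proposition~\ref{pr:KProj} (and its proof) the compact objects of $\lfK(\Prj G,R)$ are precisely those of $\bfK^{+,b}(\prj G)$, i.e.\ bounded-below complexes of finitely generated projective $G$-modules with bounded cohomology.

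First I would show that $\otimes_R$ restricts to $\lfK(\Prj G,R)$ and that $\bfi R$ lies in it, which simultaneously yields the tensor-triangulated structure and condition (1). Since $\lfK(\Prj G,R)$ is by definition the localising subcategory of $\bfK(\Prj G)$ generated by $\bfK^{+,b}(\prj G)$, and $X\otimes_R(-)$ preserves coproducts and exact triangles for fixed $X$, a two-variable argument reduces the closure statement to showing $X\otimes_R Y\in\bfK^{+,b}(\prj G)$ when $X,Y\in\bfK^{+,b}(\prj G)$. Each term $X^i\otimes_R Y^j$ is a finitely generated projective $G$-module, using that projective $G$-modules are projective over $R$ together with the lemma on tensoring a projective $G$-module with an $R$-projective module above; the total complex is bounded below; and $X,Y$ are perfect, hence $K$-flat, over $R$ by Lemma~\ref{le:KProj-compact}, so $X\otimes_R Y$ computes $X\lotimes_R Y$ and is again perfect over $R$, in particular has bounded cohomology. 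Thus $X\otimes_R Y\in\bfK^{+,b}(\prj G)$. Finally $\bfi R\in\bfK^{+,b}(\prj G)$ by Lemma~\ref{le:KProj-compact}(2) (take $M=R$ and $X''=0$), so it is compact and serves as the monoidal unit.

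For condition (2), the collection of compact $X$ such that \eqref{eq:rigidity} is an isomorphism for all compact $Y$ is closed under shifts, retracts, and — by the five-lemma, since $\fHom(-,\one)\otimes Y$, $\fHom(-,Y)$, and the comparison are all triangulated in $X$ — exact triangles; hence it is a thick subcategory of $\lfK(\Prj G,R)^c\simeq\bfD^b(\rmod(G,R))$. Since the latter is generated as a thick subcategory by the lattices $M\in\rmod(G,R)$ placed in degree $0$, i.e.\ by the complexes $\bfi M$, it suffices to prove each such $M$ is rigid. Here the Hopf structure on $RG$ enters: for a lattice $M$ the $R$-module $\Hom_R(M,R)$ carries a $G$-action via the antipode, and because $M$ is finitely generated projective over $R$ the evaluation $\Hom_R(M,R)\otimes_R M\to R$ and coevaluation $R\to M\otimes_R\Hom_R(M,R)$ are $G$-equivariant and satisfy the triangle identities already in $\rmod(G,R)$. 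Thus $M$ is dualizable with dual $\Hom_R(M,R)$ in $\rmod(G,R)$, hence also in $\bfD^b(\rmod(G,R))$, and dualizability of $X$ forces $\fHom(X,\one)\otimes Y\longiso\fHom(X,Y)$.

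The main obstacle is not a single hard step but keeping the two incarnations of the compact objects synchronised: rigidity is verified in $\bfD^b(\rmod(G,R))$ (with unit $R$ and tensor $\lotimes_R$) but is needed in $\lfK(\Prj G,R)$ (with unit $\bfi R$ and tensor $\otimes_R$). The bridge is exactly the fact used in the first step — a bounded-below complex of projective $G$-modules with bounded cohomology is $K$-flat over $R$ — so that $\otimes_R$ and $\lotimes_R$ agree on compacts and the equivalence $\lfK(\Prj G,R)^c\simeq\bfD^b(\rmod(G,R))$ is symmetric monoidal and compatible with internal function objects. The only genuinely computational point is the standard Hopf-algebra check that evaluation and coevaluation for a lattice are morphisms of $G$-modules.
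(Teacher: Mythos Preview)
Your proof is correct and follows essentially the same approach as the paper: both pass through the identification $\lfK(\Prj G,R)^c\simeq\bfD^b(\rmod(G,R))$ from Proposition~\ref{pr:KProj}, use perfectness over $R$ to see that the tensor product of compacts is compact, and then verify rigidity on the level of $\bfD^b(\rmod(G,R))$. The only notable difference is in how rigidity is checked: the paper handles an arbitrary $M\in\bfD^b(\rmod(G,R))$ in one stroke by identifying $\fHom(M,N)=\RHom_R(M,N)$ and invoking the standard isomorphism $\RHom_R(M,R)\lotimes_R N\iso\RHom_R(M,N)$ for $M$ perfect over $R$, whereas you first reduce by a thick-subcategory argument to lattices in a single degree and then exhibit dualisability explicitly via the antipode. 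Your route makes the role of the Hopf structure more visible; the paper's route is shorter and avoids the extra reduction step.
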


\begin{proof}
  We claim that the tensor product on $\bfK(\Prj G)$ restricts to a  product on $\lfK(\Prj G, R)$.  Because $\lfK(\Prj G, R)$ is compactly
  generated it suffices to show that for any compact objects $X,Y$ in $\lfK(\Prj G, R)$, the complex $X\otimes_RY$ is again in $\lfK(\Prj G, R)$. For the rigidity we need to verify that  $X\otimes_RY$ is also compact and that \eqref{eq:rigidity} is an
  isomorphism. We use the identification of the compact objects in $\lfK(\Prj G, R)$ with the objects in $\bfD^b(\rmod(G,R))$; see
  Proposition~\ref{pr:KProj}.  For $M,N$ in $\bfD^b(\rmod(G,R))$ it is clear that the complex $M\lotimes_RN$ is in $\bfD^b(\rmod(G,R))$ since $M$ and $N$ are perfect complexes over $R$. Now we can identify $\fHom(M,N)=\RHom_R(M,N)$ and it remains to observe that the map
\[
\RHom_R(M,R) \lotimes_R N \lra \RHom_R(M,N)
\]
of $G$-complexes is an isomorphism, because $M$ is perfect over $R$.
\end{proof}

\subsection*{Cohomology}
With $G$ acting trivially on $R$, we consider the cohomology algebra $S\colonequals \Ext^*_G(R,R)$ of $G$. Since $G$ is a group scheme, standard arguments  yield that $S$ is a graded-commutative $R$-algebra. For any $G$-module $M$ set
\[
H^*(G,M) \colonequals \Ext_G^*(R,M)\,.
\]
This is a graded module over $S$, under cup-products; see \cite{Benson:1998c}.  Friedlander and Suslin~\cite{Friedlander/Suslin:1997a} prove that $S$ is finitely generated---equivalently, noetherian---when $R$ is a field; this result has been extended by van der Kallen \cite{vanderKallen:2023a} to cover any noetherian commutative ring $R$; see \ref{ch:vanderkallen}.

For any map $R\to R'$ we write $G_{R'}$ for the group scheme over $R'$ obtained by base change of $G$ along $R\to R'$. This induces a map of graded-commutative rings
\[
H^*(G,R)\lra H^*(G_{R'},R')\,.
\]
For any $R'$-module $M'$ one has an isomorphism of $H^*(G,R)$-modules
\[
H^*(G,M') \cong H^*(G_{R'},M')\,,
\]
where $M'$ is viewed as a $G$-module via restriction, as usual. This is compatible with the map of graded algebras $H^*(G,R)\to H^*(G_{R'},R')$.

\subsection*{Examples}
We list examples of finite flat group schemes over rings; the purpose is to indicate that there is a plethora of such things. We are grateful to Sean Cotner for sharing his expertise on this topic.

\begin{example} 
\label{ex:tate}  
For $R$ a noetherian commutative ring (satisfying mild conditions) Oort and Tate \cite{Tate/Oort:1970a} classified finite group schemes with coordinate algebra $R[x]/(x^p)$. When $R$ is a complete local ring with residue field of characteristic $p$, there is such a group scheme for any factorisation $p=ac$ in $R$. Two such group schemes are isomorphic if the pairs $(a,c)$ and $(a^\prime, c^\prime)$ differ by multiplication by a unit in $R$. 

When $R$ contains $\mathbb F_p$ the group algebra of  such a group scheme is $R[x]/(x^p-\lambda x)$ for $\lambda \in R$, and two such group schemes are isomorphic if the $\lambda$'s differ by a unit. 
\end{example}

\begin{example}
    \label{ex:Ga} 
    To construct examples of higher rank, consider an $\mathbb F_p$-algebra $R$ and the additive group scheme $\mathbb G_{a,R}$ over $R$. Its coordinate algebra is $R[x]$. Let 
\[
f\colon \mathbb G_{a,R} \lra \mathbb G_{a,R}
\]
be a map of group schemes defined by the map on coordinate algebras
\[
f^*\colon R[x] \lra R[x], \quad f^*(x) =  x^{p^n} + a_{p^n-1}x^{p^{n-1}} + \ldots + a_1x
\] 
for some $n \geq 1$, and elements $a_i$ in $R$. This is a Hopf algebra map since $f^*(x)$ is primitive by construction and hence defines a map of group schemes. Let $G$ be the scheme theoretic kernel of $f$. Then $G$  is a finite flat group scheme over $R$ with coordinate algebra $R[x]/(f^*(x))$, of rank $p^n$.

For example, take $R = k[[t]]$, with $k$ a field of positive characteristic $p$, and $f^*(x) = x^p - t^{p-1}x$.  The fibres of $G\colonequals\Ker f$ over $R$ are different group schemes. The special fibre, at $t=0$, has the coordinate algebra $k[x]/(x^p)$, so $G_k \cong \mathbb G_{a(1)}$, the first Frobenius kernel of $\mathbb G_a$. On the other hand, the generic fibre $G_{k((t))}$ has coordinate algebra $k((t))[x]/(x^p - t^{p-1}x)$, which splits into $p$ copies of $k((t))$. Hence it is the constant finite group scheme $\bbZ/p$.
\end{example} 

Another family of commutative finite flat group schemes is given by the (scheme-theoretic) $p$-torsion subgroups of supersingular abelian varieties defined over $\mathbb F_p$-algebras; see \cite[Chapter 13]{Li/Oort:1998a}. The next family is again based on $\mathbb G_a$ but consists of noncommutative group schemes.

\begin{example}
\label{ex:noncomm}
Set $R=k[t]/(t^p)$ where  $k$ is a field of characteristic $p$. Consider the cyclic group  $\bbZ/p$ with a generator $\sigma$ acting on $\mathbb G_{a,R}$ via multiplication by $(t+1)$:  
\[
\sigma \circ s \mapsto (t+1)s
\] 
for any $s \in \mathbb G_{a,R}$. On the level of the coordinate algebra $R[\mathbb G_{a}] \cong R[x]$, the $R$-algebra automorphism $\sigma$ is defined by $\sigma(x) = (t+1)x$.  This action commutes with the Frobenius homomorphism on $\mathbb G_{a,R}$ and, hence, restricts to the Frobenius kernels $\mathbb G_{a(r),R}$. We consider finite flat groups schemes over $R$ which are semi-direct products with respect to this action:  $G_r = \mathbb G_{a(r),R} \rtimes \bbZ/p$.   

Explicitly, the group algebra $RG_r$ is the smash product: $RG_r \cong R\mathbb G_{a(r)} \# R\bbZ/p$. The coproduct is inherited from that on $R\mathbb G_{a(r)} \cong R[x_1, \ldots, x_r]/(x_1^p, \ldots, x_r^p)$ and $R\bbZ/p \cong R[\sigma]/(\sigma^p-1)$ and is cocommutative. The product is twisted with the action of $\sigma$: 
\begin{align*} 
(1 \# \sigma)(x_1 \# 1) & = (t+1)x_1\#\sigma; \\
(1 \# \sigma)(x_i \# 1) & = x_i\#\sigma, \quad 1 < i \leq r.
\end{align*} 
These are noncommutative and finite flat over $R$, with $RG_r$ a free $R$-module of rank $p^{r+1}$.   The smallest one, $G_1$, cannot be obtained via base change from a group scheme defined over any integral domain. A proof of this claim was communicated to us by Sean Cotner.
\end{example}

\section{Enhancements} 
\label{se:enhancements}

Let $G$ be a finite flat group scheme over a noetherian commutative ring $R$. In this section we exhibit an $\infty$-categorical enhancement of the triangulated category $\lfK(\Prj G,R)$ and, when $G$ is a finite group, compare it to the derived $\infty$-category $\crep(G,R)$ of representations studied in \cite{Barthel:strat,Barthel:strat-regular}.  While this enhancement  is not used in the remainder of this paper, it plays an important role in related developments; see \ref{ch:highercats}. The reader not familiar with the language of $\infty$-categories may consult Lurie's writings~\cite{Lurie:2009a,Lurie:2017a}, or any of the other accounts that have since appeared on this topic.

We begin with the construction of an enhancement of $\bfK(\Prj A)$, for any ring $A$. Let $\Ch(\Prj A)$ be the category of (unbounded) chain complexes of projective $A$-modules, viewed as a differential graded category. Following \cite[Construction~1.3.1.6]{Lurie:2017a}, we define
\[
\enhK(\Prj A) \coloneqq \mathrm{N}_{\mathrm{dg}}(\Ch(\Prj A))
\]
as the differential graded nerve of $\Ch(\Prj A)$. This is a stable $\infty$-category whose homotopy category identifies with $\bfK(\Prj A)$; see \cite[\S1.3.1]{Lurie:2017a}.

With $R$ and $G$ as above, the tensor product on $\Ch(\Prj G)$ induces a symmetric monoidal structure on $\enhK(\Prj G)$. It follows as in the proof of Proposition~\ref{pr:rigidity} that the localising subcategory of $\enhK(\Prj G)$ spanned by those objects which satisfy the equivalent conditions of Lemma~\ref{le:KProj-compact} inherits a structure of symmetric monoidal stable $\infty$-category. We denote this $\infty$-category $\enhK(\Prj G,R)$; by construction, its homotopy category identifies with $\lfK(\Prj G,R)$.

Let $\enhD^b(\rmod(G,R))$ be the symmetric monoidal stable $\infty$-category underlying $\bfD^b(\rmod(G,R))$.\footnote{As in \cite{Lau:2023a}, one observes that $\bfD^b(\rmod(G,R))$ is equivalent to the derived category of perfect complexes on the quotient stack $\Spec(R) \sslash G$.} Since compact objects, generators, and rigidity are detected at the level of the homotopy categories, Propositions~\ref{pr:KProj} and~\ref{pr:rigidity} translate into the statement that $\enhK(\Prj G,R)$ is rigidly-compactly generated with full subcategory of compact objects determined by the equivalence
\[
\enhK(\Prj G,R)^c\longiso\, \enhD^b(\rmod(G,R))\,,
\]
induced by the localisation functor that inverts the quasi-isomorphisms. Equivalently, $\enhK(\Prj G,R)$ is the ind-completion of $\enhD^b(\rmod(G,R))$, i.e., there is an equivalence
\begin{equation}
\label{eq:enhrep}
\enhK(\Prj G,R) \longiso\, \Ind\enhD^b(\rmod(G,R))\,,
\end{equation}
of rigidly-compactly generated symmetric monoidal stable $\infty$-categories. Passage to the homotopy category thus provides a model for $\lfK(\Prj G,R)$. We refer to \cite[\S5.3.5]{Lurie:2009a}, \cite[Proposition~1.1.3.6]{Lurie:2017a}, and \cite[Corollary~4.8.1.14]{Lurie:2017a} for a discussion of ind-completions of $\infty$-categories and their monoidal properties. 

In the remainder of this section, we specialise to the case that $G$ is a finite group. In \cite{Barthel:strat}, the first author introduced a derived $\infty$-category of $R$-linear $G$-representation; we recall the definition.

\begin{chunk}
\label{def:repcat}
 Write $BG$ for the classifying space of the finite group $G$, viewed as an $\infty$-groupoid. The derived $\infty$-category of representations is 
\[
\crep(G,R) \coloneqq \Ind\Fun(BG,\enhD^{\mathrm{perf}}(R))\,,
\]
the ind-category of the category of local systems on $BG$ with coefficients in perfect $R$-modules. Equipped with the pointwise tensor product, $\crep(G,R)$ has the structure of a symmetric monoidal stable $\infty$-category.
\end{chunk}

The starting point for our comparison between $\crep(G,R)$ and $\enhK(\Prj G,R)$ is the observation that, by construction, $\crep(G,R)$ is rigidly-compactly generated with full subcategory of compact objects given by $\Fun(BG,\enhD^{\mathrm{perf}}(R))$. Consider first the $\infty$-category $\Fun(BG,\enhD(R))$ of local systems on $BG$ with coefficients in $\enhD(R)$, equipped with the pointwise symmetric monoidal structure. Let $\enhD(RG)$ be the derived $\infty$-category of $RG$-modules with symmetric monoidal structure coming from the coproduct on the cocommutative Hopf algebra $RG$, as above; its homotopy category is $\bfD(\Mod(RG))$. The next result is folklore, so we only sketch the proof.

\begin{lemma}\label{lem:localsystem}
Let $G$ be a finite group and $R$ a commutative ring. There is an equivalence 
$\Fun(BG,\enhD(R)) \simeq \enhD(RG)$ of symmetric monoidal stable $\infty$-categories, which restricts to a symmetric monoidal equivalence
\[
\Fun(BG,\enhD^{\mathrm{perf}}(R)) \simeq \enhD^b(\rmod(G,R))\,.
\]
\end{lemma}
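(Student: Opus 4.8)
The plan is to establish the equivalence $\Fun(BG,\enhD(R)) \simeq \enhD(RG)$ first, and then restrict it to compact objects. For the first equivalence, the key point is that $BG$ is a $1$-type, so a local system on $BG$ with values in $\enhD(R)$ is the same data as an object of $\enhD(R)$ together with a coherent $G$-action, i.e.\ an object of the homotopy fixed point category. Concretely, one invokes the standard description of functor categories out of a group(oid): $\Fun(BG,\cat C) \simeq \cat C^{hG}$ for any $\infty$-category $\cat C$, and when $\cat C = \enhD(R)$ this identifies with the $\infty$-category of modules over the group algebra. The cleanest route is to observe that $BG$, as an $\infty$-groupoid, is the nerve of the one-object groupoid with automorphism group $G$; then $\Fun(BG,\enhD(R))$ is computed as a limit, and by the Schwede--Shipley type recognition (or directly via \cite[\S4.8]{Lurie:2017a}) the category of $R$-linear local systems on $BG$ is equivalent to modules over the $E_1$-ring $C_*(G;R) = RG$ (concentrated in degree zero since $G$ is discrete). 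The symmetric monoidal structure matches because the pointwise tensor on local systems corresponds, under this identification, to the tensor product of $RG$-modules with the diagonal $G$-action, which is exactly the symmetric monoidal structure on $\enhD(RG)$ coming from the cocommutative coproduct $\Delta\colon RG \to RG\otimes_R RG$.

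Second, I would identify the compact objects on both sides and check the equivalence preserves them. On the local systems side, the compact objects of $\Ind\Fun(BG,\enhD^{\mathrm{perf}}(R)) = \crep(G,R)$ are, by construction, the objects of $\Fun(BG,\enhD^{\mathrm{perf}}(R))$; equivalently these are the local systems that are pointwise perfect over $R$, i.e.\ bounded complexes with finitely generated projective cohomology over $R$ after forgetting the $G$-action. On the module side, I want to show these correspond precisely to $\enhD^b(\rmod(G,R))$. This is exactly the content of Proposition~\ref{pr:cofinal} in the excerpt: $\bfD^b(\rmod(G,R))$ is identified with the full subcategory of $\bfD(\Mod RG)$ consisting of complexes that are perfect over $R$. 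Since the equivalence $\Fun(BG,\enhD(R)) \simeq \enhD(RG)$ is $R$-linear and intertwines the two forgetful functors to $\enhD(R)$ (forgetting the $G$-action resp.\ restricting a local system to a point), a local system is pointwise perfect over $R$ if and only if the corresponding $RG$-module complex is perfect over $R$, hence lies in $\enhD^b(\rmod(G,R))$. The monoidal structure restricts because perfect-over-$R$ complexes are closed under $\lotimes_R$.

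The main obstacle — or rather the only point requiring genuine care — is the identification of the symmetric monoidal structures. It is easy to produce the underlying equivalence $\Fun(BG,\enhD(R))\simeq\enhD(RG)$ as stable $\infty$-categories; what takes work is checking that the pointwise (Day-type) monoidal structure on local systems goes over to the tensor product on $RG$-modules induced by the Hopf coproduct, rather than to some other monoidal structure. I would handle this by noting that both monoidal structures are the unique ones making the respective forgetful functor to $\enhD(R)$ lax symmetric monoidal in the appropriate universal way: on local systems, pointwise tensor is computed objectwise in $\enhD(R)$, and on $\enhD(RG)$ the comodule/module diagonal tensor is defined precisely so that the forgetful functor $\enhD(RG)\to\enhD(R)$ is symmetric monoidal. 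Comparing these via the already-constructed equivalence of underlying categories, together with the compatibility of the $G$-action with diagonals, pins down the monoidal comparison. Since the lemma is folklore and the statement says only to sketch, I would at this stage cite \cite[\S4.8]{Lurie:2017a} for the module-theoretic identification of local systems, \cite[\S2]{Lurie:2017a} for the transport of symmetric monoidal structures along an equivalence, and Proposition~\ref{pr:cofinal} for the compact-object identification, and leave the routine coherence bookkeeping to the reader.
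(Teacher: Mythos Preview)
Your proposal is correct but follows a genuinely different route from the paper. The paper argues via comonadic descent: it takes $A = \fHom(RG,R)$ as a dualisable commutative algebra, observes that induction along $R \to A$ identifies with restriction $\enhD(RG) \to \enhD(R)$, and applies the Barr--Beck--Lurie theorem to obtain a symmetric monoidal equivalence between $\enhD(RG)$ and the totalisation of the resulting cosimplicial diagram $\prod_{G^{\times\bullet}}\enhD(R)$; the standard bar resolution of $BG$ then identifies this totalisation with $\Fun(BG,\enhD(R))$. You instead invoke the direct description of $\Fun(BG,-)$ as objects with coherent $G$-action and appeal to a Schwede--Shipley type recognition principle. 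Both are standard folklore arguments, but the descent approach has the advantage that the comparison functor is symmetric monoidal by construction, so no separate argument is needed for the tensor structures---whereas you acknowledge this as the one delicate point and handle it by a uniqueness-of-monoidal-structure argument that remains somewhat sketchy.

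For the restriction to small subcategories, the paper passes to \emph{dualisable} objects on both sides (automatic under any symmetric monoidal equivalence) and then identifies these: pointwise-dualisable local systems on the left, and on the right those $RG$-complexes whose underlying $R$-complex is perfect, using that the conservative restriction functor detects dualisability. You instead argue via compatibility of forgetful functors and invoke Proposition~\ref{pr:cofinal} at the end. Your route works, but note that framing things in terms of compacts of the ind-completion $\crep(G,R)$ is a detour: the lemma itself is only about restricting the big equivalence, and the dualisable-object argument gets there more directly.
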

\begin{proof}
The $R$-linear dual $A = \fHom(RG,R)$ of $RG$ has the structure of a dualisable commutative algebra in $\enhD(G,R)$. Induction along the unit map $R \to A$ identifies with the restriction functor $\mathrm{Res}\colon \enhD(RG) \to \enhD(R)$ and the corresponding cosimplicial descent diagram takes the form
\[
\Delta \lra \mathrm{Cat}_{\infty}, \qquad \bullet \mapsto \textstyle\prod_{G^{\times \bullet}}\enhD(R) \coloneqq (\enhD(R) \rightrightarrows \prod_{G}\enhD(R) \Rrightarrow \ldots)\,.
\]
We thus obtain a symmetric monoidal exact comparison functor to the totalisation,
\begin{equation}\label{eq:deriveddescent}
\enhD(RG) \lra \mathrm{Tot}\textstyle\prod_{G^{\times \bullet}}\enhD(R)\,.
\end{equation}
On the one hand, since $\mathrm{Res}$ is conservative, it follows from the Barr--Beck--Lurie theorem \cite[Theorem~4.7.3.5]{Lurie:2017a} that the functor \eqref{eq:deriveddescent} is an equivalence. On the other hand, the standard simplicial bar resolution of $BG$ implies that the totalisation in \eqref{eq:deriveddescent} recovers the category of local systems on $BG$ with coefficients in $\enhD(R)$,
\[
\Fun(BG,\enhD(R)) \simeq \mathrm{Tot}\textstyle\prod_{G^{\times \bullet}}\enhD(R)\,,
\]
equipped with its pointwise symmetric monoidal structure. Composing these two symmetric monoidal equivalences, we obtain the first part of the claim:
\begin{equation}\label{eq:localsystem1}
    \Fun(BG,\enhD(R)) \simeq \enhD(RG)\,. 
\end{equation}  
The equivalence of \eqref{eq:localsystem1} restricts to an equivalence on dualisable objects:
\begin{equation}\label{eq:localsystem2}
    \Fun(BG,\enhD(R))^{\mathrm{dbl}} \simeq \enhD(RG)^{\mathrm{dbl}}\,.
\end{equation}
On the one hand, since the tensor product is pointwise, the left side of \eqref{eq:localsystem2} identifies with $\Fun(BG,\enhD(R)^{\mathrm{dbl}}) \simeq \Fun(BG,\enhD(R)^{\mathrm{perf}})$. On the other hand, the conservative functor $\enhD(RG) \to \enhD(R)$ detects dualisable objects, so the right side of \eqref{eq:localsystem2} is naturally equivalent to $\enhD^b(\rmod(G,R))$ by Proposition~\ref{pr:cofinal}, as claimed.
\end{proof}

\begin{chunk}
Let $i^*R \in \Fun(BG, \enhD(R))$ be the local system arising from pushing $R$ forward along the inclusion of a chosen basepoint $i\colon \ast \to BG$. Unwinding the construction, the equivalence of Lemma~\ref{lem:localsystem} is determined by sending $i^*R$ to the regular representation $RG \in \enhD(RG)$.
\end{chunk}

\begin{proposition}
\label{prp:repmodel}
Let $G$ be a finite group and $R$ a noetherian commutative ring. The equivalence of Lemma~\ref{lem:localsystem} induces an equivalence 
\[
\crep(G,R) \simeq \enhK(\Prj G,R)
\]
of symmetric monoidal stable $\infty$-categories.
\end{proposition}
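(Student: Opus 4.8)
The plan is to reduce the statement to what has already been established, namely the equivalence of Lemma~\ref{lem:localsystem} at the level of small categories, together with the two ind-completion descriptions recorded earlier in this section. First I would observe that both sides are, by construction, rigidly-compactly generated symmetric monoidal stable $\infty$-categories. On the one hand, $\crep(G,R) = \Ind\Fun(BG,\enhD^{\mathrm{perf}}(R))$ by Definition~\ref{def:repcat}. On the other hand, the equivalence \eqref{eq:enhrep} identifies $\enhK(\Prj G,R)$ with $\Ind\enhD^b(\rmod(G,R))$ as symmetric monoidal stable $\infty$-categories. So it suffices to produce a symmetric monoidal equivalence between the two \emph{compact} cores
\[
\Fun(BG,\enhD^{\mathrm{perf}}(R)) \simeq \enhD^b(\rmod(G,R))
\]
and then apply the functoriality of $\Ind(-)$ for symmetric monoidal exact functors between small symmetric monoidal stable $\infty$-categories; see \cite[Corollary~4.8.1.14]{Lurie:2017a}. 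But that equivalence of cores is precisely the second assertion of Lemma~\ref{lem:localsystem}. Applying $\Ind(-)$ therefore yields the desired equivalence $\crep(G,R) \simeq \enhK(\Prj G,R)$, and it is symmetric monoidal because $\Ind(-)$ of a symmetric monoidal functor is symmetric monoidal.

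The second thing to check is that this equivalence is the one \emph{induced} by Lemma~\ref{lem:localsystem}, in the sense that it is compatible with the structure maps of both ind-completions. For this I would record that the equivalence \eqref{eq:enhrep} is implemented by the localisation functor inverting quasi-isomorphisms, which on compact objects is the identification $\enhK(\Prj G,R)^c \longiso \enhD^b(\rmod(G,R))$; dually, $\crep(G,R)$ is the ind-completion of its own compact core via the Yoneda embedding. Since $\Ind(-)$ sends a commuting square of exact functors to a commuting square, the equivalence on cores from Lemma~\ref{lem:localsystem} assembles into a square identifying the two big categories together with their compact generators. Concretely one can track the distinguished object through: the local system $i^*R$ goes to the regular representation $RG \in \enhD^b(\rmod(G,R))$ under Lemma~\ref{lem:localsystem}, and $RG$ is (quasi-isomorphic to) its own projective resolution, hence represents the image of $i^*R$ in $\enhK(\Prj G,R)$; so the monoidal units and the chosen generators match up on both sides.

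I expect the only genuine subtlety to be bookkeeping with the symmetric monoidal structures under $\Ind(-)$, rather than any real mathematical obstacle: one must make sure that the pointwise tensor product on $\Fun(BG,\enhD^{\mathrm{perf}}(R))$, the Day-convolution-type product inherited by its ind-completion, and the tensor product $\otimes_R$ on $\enhK(\Prj G,R)$ coming from the coproduct on the Hopf algebra $RG$ are all matched up coherently. The key point is that $\Ind(-)$, regarded as a functor $\mathrm{Cat}_\infty^{\mathrm{ex}} \to \mathrm{Pr}^{\mathrm L}_{\mathrm{st}}$, is symmetric monoidal and preserves the relevant compactly generated structures, so a symmetric monoidal equivalence of small stable $\infty$-categories passes to a symmetric monoidal equivalence of their ind-completions; this is the content of \cite[Corollary~4.8.1.14]{Lurie:2017a} (see also \cite[\S5.3.5]{Lurie:2009a} and \cite[Proposition~1.1.3.6]{Lurie:2017a}). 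Granting that formal input, the proof is essentially immediate from Lemma~\ref{lem:localsystem} and the identification \eqref{eq:enhrep}.
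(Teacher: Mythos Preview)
Your proposal is correct and follows essentially the same route as the paper: observe that both sides are rigidly-compactly generated, invoke \eqref{eq:enhrep} and the definition of $\crep(G,R)$ to express each as an ind-completion, and then apply $\Ind(-)$ to the equivalence of compact cores from Lemma~\ref{lem:localsystem}. The paper's proof is just the chain of equivalences $\crep(G,R) = \Ind\Fun(BG,\enhD^{\mathrm{perf}}(R)) \simeq \Ind\enhD^b(\rmod(G,R)) \simeq \Ind(\enhK(\Prj G,R)^c) \simeq \enhK(\Prj G,R)$, without the additional bookkeeping you included about generators and monoidal compatibility.
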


\begin{proof}
Recall that both sides of the desired equivalence are rigidly-compactly generated $\infty$-categories. Combining Lemma~\ref{lem:localsystem} with \eqref{eq:enhrep} thus yields equivalences
\begin{align*}
\crep(G,R) & = \Ind\Fun(BG,\enhD^{\mathrm{perf}}(R)) \\
& \simeq \Ind\enhD^b(\rmod(G,R)) \\
& \simeq \Ind(\enhK(\Prj G,R)^c) \\
& \simeq \enhK(\Prj G,R)
\end{align*}
of symmetric monoidal stable $\infty$-categories.
\end{proof}

\begin{chunk}
\label{ch:highercats}
One feature of the $\infty$-category $\crep(G,R)$ is that its construction extends to any topological group $G$ and any symmetric monoidal coefficient $\infty$-category, such as perfect modules over commutative ring spectra. This allows one to transport questions from modular representation theory to higher algebra and, vice versa, apply homotopical techniques to problems in representation theory; see \cite{Barthel/Castellana/Heard/Naumann/Pol}.
\end{chunk}

\section{Cohomology and fibres}
\label{se:cohomology-and-fibres}

As in Section~\ref{se:groups}, we fix a noetherian commutative ring $R$ and a finite flat group scheme $G$ over $R$.
For each map of rings $R\to k$ with $k$ a field, consider the $k$-algebra
\[
S(k)\colonequals \Ext_{G_k}^*(k,k)\,.
\]
The functor $-\lotimes_R k$ induces a map of graded $R$-algebras $S\to S(k)$ and hence a map of graded $k$-algebras
\begin{equation}
\label{eq:kappa}
\phi_k \colon S \otimes_R k \lra S(k)\,.
\end{equation}
The source and target are graded-commutative and finitely generated. Consider the induced map on spectra:
\begin{equation}
\label{eq:spec-kappa}
\Spec(\phi_k)\colon \Spec S(k)\lra \Spec (S \otimes_R k)\,.
\end{equation}
In \cite{Lau:2023a} it is proved that this map is a homeomorphism when $G$  is a finite group. The  argument, going back to \cite{Benson/Habegger:1987a},  extends to finite flat group schemes and yields the result below. The key new input is work of van der Kallen~\cite{vanderKallen:2023a}, recalled in \ref{ch:vanderkallen}.

\begin{theorem}
\label{th:kappa}
The map $\Spec(\phi_k)$ is a homeomorphism.
\end{theorem}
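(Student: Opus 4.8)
The plan is to reduce to the case where $R$ is itself a field and then invoke the classical result of Benson--Habegger, suitably amplified by van der Kallen's finite generation theorem. First I would observe that the formation of the map $\phi_k$ factors through the fibre of $R$ at the prime $\fp = \ker(R \to k)$: writing $F \colonequals k(\fp)$, the map $R \to k$ factors as $R \to F \to k$, and since $G_k = (G_F)_k$ there are compatible maps $S \otimes_R k \to S(F) \otimes_F k \to S(k)$. Because $k$ is a field extension of $F$, base change along $F \to k$ is flat, so $S(F) \otimes_F k \to S(k)$ and $S \otimes_R k \to (S \otimes_R F) \otimes_F k$ induce homeomorphisms on spectra provided the relevant cohomology rings commute with this flat base change; this is the standard fact that $\Ext^*_{G_F}(F,F) \otimes_F k \iso \Ext^*_{G_k}(k,k)$ when the $\Ext$ modules are finitely generated, which holds by van der Kallen \ref{ch:vanderkallen}. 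So it suffices to treat the case $k = F = k(\fp)$, i.e.\ to show $\Spec S(k(\fp)) \iso \Spec(S \otimes_R k(\fp))$; but this is precisely the content of Theorem~\ref{ithm:fiberwise2} (Theorem~\ref{th:kappa} in the body as stated), so really the work is to prove \emph{that} instance.

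For the genuine content — the case $k = k(\fp)$ — I would follow the Benson--Habegger strategy. One first replaces $R$ by the local ring $R_\fp$, which does not change either side, and then passes to the completion $\widehat{R_\fp}$, using that completion is faithfully flat and that cohomology of a finite flat group scheme commutes with flat base change once finite generation is known (again van der Kallen). Over a complete local ring one has more tools: in particular one can lift idempotents and one can use the structure theory relating $H^*(G,R)$ to $H^*(G_k,k)$ through a spectral sequence or through the comparison of minimal resolutions. The heart of the Benson--Habegger argument is that the kernel of $\phi_k$ is nilpotent and that $\phi_k$ is (up to nilpotents) surjective, so that $\Spec(\phi_k)$ is a bijection; one then checks it is a closed map, hence a homeomorphism, using that both rings are finitely generated graded $k$-algebras (van der Kallen on the source, and $S \otimes_R k$ is finitely generated because $S$ is finitely generated over $R$ by van der Kallen).

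The nilpotence of $\ker \phi_k$ and the near-surjectivity are where the real work lies. For surjectivity up to nilpotents one argues that every element of $S(k) = \Ext^*_{G_k}(k,k)$ is detected, after raising to a power, by a class coming from $S$; concretely one uses that a high enough power of any cohomology class over the special fibre lifts to a class over $R$ — this is a consequence of van der Kallen's finiteness results, which guarantee that the "comparison" map has image containing a power of each generator. For nilpotence of the kernel one uses that an element of $S$ mapping to $0$ in $S(k)$ restricts to $0$ on the special fibre, hence (by a Nakayama-type argument over the complete local ring, or by the detection of nilpotence via restriction to fibres) is itself nilpotent in $S \otimes_R k$. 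I expect this last point — controlling the kernel of $\phi_k$ and showing it consists of nilpotents — to be the main obstacle, since it is exactly where one cannot argue purely formally and must feed in the quantitative finite generation input of van der Kallen; the reduction steps to the complete local case, by contrast, are flat-base-change bookkeeping.
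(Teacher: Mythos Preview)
Your overall architecture is correct: reduce to the residue field case via flat base change, then show $\phi_k$ is an $F$-isomorphism (nilpotent kernel, image contains $p$-th powers). But the mechanism you propose for the crucial step is wrong, and the route through completion is a detour that does not help.

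The gap is in your justification of ``surjectivity up to nilpotents''. You write that lifting of high powers ``is a consequence of van der Kallen's finiteness results, which guarantee that the comparison map has image containing a power of each generator''. Finite generation of $H^*(G,R)$ gives no such thing; it only controls the source, not the image. The relevant input from van der Kallen is the \emph{torsion} statement \ref{ch:vanderkallen}(1): there exists $d$ with $d\cdot H^{\geqslant 1}(G,-)=0$. After localising at a prime $p$ of the residue characteristic this becomes $p^n\cdot H^{\geqslant 1}(G,-)=0$, and the lifting argument runs as follows. For a surjection $R'\to R''$ with kernel $I$ and $I^2=0$, the connecting map $\delta\colon H^*(G,R'')\to H^{*+1}(G,I)$ is a \emph{derivation}; hence $\delta(x^{p^n})=p^n x^{p^n-1}\delta(x)=0$, so $x^{p^n}$ lifts. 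This is Lemma~\ref{le:uhom-nilpotent}. A separate and more delicate argument (Lemma~\ref{le:uhom-nzd}) handles the quotient by a nonzerodivisor, where one must first stabilise a chain of annihilator ideals and then run a similar derivation computation at the dg level. The two lemmas are combined by noetherian induction on $\dim R_\fp$: kill the nilradical, find a nonzerodivisor $t$ in the maximal ideal, pass to $R/t$, and repeat. None of this structure appears in your proposal, and the completion step you suggest does not produce it; over $\widehat{R_\fp}$ you face exactly the same problem of getting from $H^*(G,\widehat{R_\fp})$ to $H^*(G_k,k)$.

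Two smaller points. First, your reduction from general $k$ to $k=k(\fp)$ needs the map $\phi_{k(\fp)}$ to be a \emph{universal} homeomorphism, not merely a homeomorphism, since bijectivity on $\Spec$ is not stable under field extension; the paper's argument in fact establishes this stronger property (see \ref{ch:uhom}). Second, your ``Nakayama-type argument'' for nilpotence of the kernel is not a real argument: the kernel of $H^*(G,R)\otimes_R k\to H^*(G_k,k)$ is not visibly controlled by Nakayama, and in the paper the issue does not arise separately because the criterion in \ref{ch:uhom} is verified directly in each inductive step.
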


When the characteristic of $k$ is zero, the source and target of the map $\phi_k$ are zero in non-negative degrees, by a result of van der Kallen's recalled below~\ref{ch:vanderkallen}, so $\Spec(\phi_k)$ is trivially a homeomorphism. The real content of the result is in the case when the characteristic of $k$ is positive. We deduce it from a more general result, Theorem~\ref{th:uhom-main}; see \ref{ch:uhom-remarks}. Its statement and proof are modeled on those in Benson--Habegger~\cite{Benson/Habegger:1987a} and Lau~\cite[Section~8]{Lau:2023a}, and require some preparation.

\subsection*{Universal homeomorphisms}
A map $\varphi\colon A\to B$ of (possibly graded) commutative rings is a \emph{universal homeomorphism} if for each map $A\to A'$ of rings the map on spectra induced by the base change map $A'\to A'\otimes_AB$, is a homeomorphism:
\[
\Spec (A'\otimes_AB) \longiso \Spec A'\,.
\]
In particular $\Spec B\to \Spec A$ is a homeomorphism. A universal homeomorphism is universally bijective (in the obvious sense), and the converse holds if $\varphi$ is finite, but not in general: consider the map $\bbZ_{(p)}\to \bbF_p\times\bbQ$ for some prime number $p$.

Universal bijectivity is easier to track. Indeed, when $\Spec B\to \Spec A$ is surjective so is the map induced by base change. So the critical point is that the one-to-one property is preserved under base change. It is not hard to verify that the following conditions are equivalent:
\begin{enumerate}
\item
$\varphi$ is universally bijective;
\item
$\varphi\otimes_A k(\fp)\colon k(\fp)\to B\otimes_A k(\fp)$ is  universally bijective for $\fp \in \Spec A$;
\item
$\Spec(B\otimes_A k)$ is a point for each map $A\to k$ of rings, where $k$ is a field.
\end{enumerate}
This observation is used repeatedly in what follows, as is the following criterion to detect universal homeomorphisms; see~\cite[Lemma~10.46.7]{StacksProject} for a proof.

\begin{chunk}
\label{ch:uhom}
Let $\varphi\colon A\to B$ be a map of commutative rings and $p$ be a prime number. Assume the following conditions hold:
\begin{enumerate}
\item
$B$ is generated as an $A$-algebra by elements $b$ such that there exist an $n\ge 1$ for which 
$p^n\cdot b$ and $b^{p^n}$ are in $\mathrm{Image}(\varphi)$;
\item
each $a\in \Ker(\varphi)$ is nilpotent.
\end{enumerate}
Then $\varphi$ is a universal homeomorphism. In particular, if $A$ contains a field of positive characteristic $p$ and $\varphi$ is an $F$-isomorphism, it is a universal homeomorphism.
\end{chunk}

\begin{chunk}
\label{ch:change-of-coefficients}
Let $G$ be a finite flat group scheme over $R$. Any map  $f\colon R'\to R''$ of $R$-algebras induces a map
\[
H^*(f)\colon H^*(G,R')\otimes_{R'}R'' \lra H^*(G,R'')
\]
of graded $R''$-algebras. Consider the collection 
\[
\chu(G) \colonequals \{ \text{$f$ a map of $R$-algebras} \mid \text{$H^*(f)$ is a universal homeomorphism}\}.
\]
\end{chunk}

The following statements are easy to verify, given the characterisations of universal homeomorphisms and universal bijective maps.

\begin{lemma}
\label{le:chu}
The collection $\chu(G)$ is  closed under compositions and contains all flat maps. When $K$ is a commutative ring, $R$ a $K$-algebra, and $f$ a map of  $R$-algebras, the following conditions are equivalent:
\begin{enumerate}[\quad\rm(1)]
\item
 $f$ is in $\chu(G)$;
 \item
 $f\otimes_{K}K_\fp$ is in $\chu(G)$  for each $\fp$ in $\Spec K$.\qed
 \end{enumerate}
\end{lemma}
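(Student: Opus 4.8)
The statement has three parts: $\chu(G)$ is closed under composition, it contains all flat maps, and the local-on-$\Spec K$ criterion (1)$\Leftrightarrow$(2). For \emph{composition}, suppose $f\colon R'\to R''$ and $g\colon R''\to R'''$ lie in $\chu(G)$. One factors $H^*(g\circ f)$ through $H^*(f)$: the base-change map $H^*(G,R')\otimes_{R'}R''' \to H^*(G,R''')$ equals the composite $H^*(G,R')\otimes_{R'}R''' \cong (H^*(G,R')\otimes_{R'}R'')\otimes_{R''}R''' \xrightarrow{H^*(f)\otimes R'''} H^*(G,R'')\otimes_{R''}R''' \xrightarrow{H^*(g)} H^*(G,R''')$. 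The first factor is a base change of the universal homeomorphism $H^*(f)$ along $R''\to R'''$, hence a universal homeomorphism by definition; the second is $H^*(g)$, a universal homeomorphism; and universal homeomorphisms are closed under composition and base change, so the composite is one.

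For the claim that $\chu(G)$ \emph{contains all flat maps}, let $f\colon R'\to R''$ be flat. Then van der Kallen's finite generation result (\ref{ch:vanderkallen}) ensures $H^*(G,R')$ is noetherian, and flat base change in cohomology---which holds here because $RG$ is finite projective over $R$, so $\Ext_{RG}$ commutes with the flat base change $-\otimes_{R'}R''$---shows that $H^*(f)$ is in fact an \emph{isomorphism} $H^*(G,R')\otimes_{R'}R'' \xrightarrow{\sim} H^*(G,R'')$. An isomorphism is trivially a universal homeomorphism, so $f\in\chu(G)$.

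For the equivalence (1)$\Leftrightarrow$(2), the implication (1)$\Rightarrow$(2) is immediate from closure under composition together with the fact, just proved, that the flat (indeed localisation) map $R\to R_\fp$, and hence the induced map on any $R$-algebra, lies in $\chu(G)$; more directly, $f\otimes_K K_\fp$ is the base change of $f$ along a flat map, and $\chu(G)$ is stable under such base change by the same factorisation argument as above. For (2)$\Rightarrow$(1), one uses the characterisation of universal homeomorphisms and universal bijectivity recorded just before the lemma: a map is a universal homeomorphism iff it is a universal homeomorphism after every base change to a residue field, and for the relevant finitely-generated graded algebras (again by van der Kallen's theorem) the $F$-isomorphism criterion \ref{ch:uhom} reduces everything to checking that $\Spec$ of a fibre is a point. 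Since $K\to k$ factors as $K\to K_\fp\to k(\fp)\to k$ for the appropriate $\fp\in\Spec K$, the hypothesis that each $f\otimes_K K_\fp$ is in $\chu(G)$ lets one conclude the universal-bijectivity condition for $f$ itself; combined with the nilpotence-of-kernel and $p^n$-divisibility conditions (which are visibly insensitive to the faithfully flat descent along $K\to\prod_\fp K_\fp$, or can be checked after the same reduction), this gives $H^*(f)\in\chu(G)$.

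\textbf{Main obstacle.} The only genuine subtlety is bookkeeping: making precise that "universal homeomorphism" is detected fibrewise over $\Spec K$ while the two algebraic conditions in \ref{ch:uhom} (nilpotent kernel, and $p^n$-power and $p^n$-multiple landing in the image) descend correctly from the collection $\{f\otimes_K K_\fp\}$ to $f$. Once one commits to checking "universally bijective + $F$-isomorphism on each relevant fibre", everything is routine diagram-chasing with base-change isomorphisms in group cohomology, which are available because $RG$ is finite projective over $R$; I would not expect to need any input beyond \ref{ch:uhom}, \ref{ch:change-of-coefficients}, and the finite generation from \ref{ch:vanderkallen}.
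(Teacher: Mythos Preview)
The paper omits the proof entirely, remarking only that it is ``easy to verify, given the characterisations of universal homeomorphisms and universal bijective maps.'' Your arguments for closure under composition, for flat maps (via flat base change in group cohomology), and for (1)$\Rightarrow$(2) are correct and along the expected lines.

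Your argument for (2)$\Rightarrow$(1), however, has a genuine gap. You assert that ``a map is a universal homeomorphism iff it is a universal homeomorphism after every base change to a residue field,'' but this is false---indeed the paper warns against exactly this just before the lemma, citing $\bbZ_{(p)}\to\bbF_p\times\bbQ$ as a map that is universally bijective (in fact an isomorphism on every fibre) yet not a universal homeomorphism. The fibrewise criterion recorded there characterises universal \emph{bijectivity} only. Your fallback, descending the conditions of \ref{ch:uhom} from the localisations to $H^*(f)$, fails for a different reason: \ref{ch:uhom} is merely a \emph{sufficient} condition, so there is no reason the nilpotent-kernel and $p^n$-power hypotheses hold for $H^*(f)\otimes_K K_\fp$ just because that map happens to be a universal homeomorphism, and hence nothing to descend. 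The aside about faithfully flat descent along $K\to\prod_\fp K_\fp$ does not help either, since that map is not faithfully flat in general.

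A correct route for (2)$\Rightarrow$(1): flat base change gives $H^*(f\otimes_K K_\fp)\cong H^*(f)\otimes_K K_\fp$, so one must show that a map $\varphi\colon A\to B$ of $K$-algebras is a universal homeomorphism whenever each $\varphi\otimes_K K_\fp$ is. Use the standard characterisation: universal homeomorphism $=$ integral $+$ radicial $+$ surjective. The last two are detected on fibres over points of $\Spec A$, each of which lies over some $\fp\in\Spec K$, so they follow from the universal-bijectivity criterion. For integrality, use that integral closure commutes with localisation: if $\bar A\subseteq B$ is the integral closure of $A$, then $(\bar A)_\fp$ is the integral closure of $A_\fp$ in $B_\fp$, which equals $B_\fp$ by hypothesis; hence $(B/\bar A)_\fp=0$ for all $\fp$, so $B=\bar A$.
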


Here is a variation of the result above concerning universal bijectivity.

\begin{lemma}
\label{le:ubij}
Let $f\colon R'\to R''$ be a map of commutative rings and $\overline{f}_\fq$ the composite $R'\to R''\to k(\fq)$, where $\fq\in\Spec R''$. The following conditions are equivalent:
\begin{enumerate}[\quad\rm(1)]
\item
 $H^*(f)$ is universally bijective;
 \item
 $H^*(\overline{f}_\fq)$ is universally bijective for each $\fq\in\Spec R''$.\qed
 \end{enumerate}
\end{lemma}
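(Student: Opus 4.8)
The plan is to prove this in two steps: first reduce universal bijectivity of $H^*(f)$ to a fibrewise statement over $R''$, exactly as in Lemma~\ref{le:chu}, and then absorb the difference between $H^*(\overline f_\fq)$ and the $k(\fq)$-base change of $H^*(f)$ using Theorem~\ref{th:kappa}.

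For the first step, note that $\varphi\colonequals H^*(f)\colon H^*(G,R')\otimes_{R'}R''\to H^*(G,R'')$ is a map of graded $R''$-algebras, so base changing it along $R''\to k(\fq)$ makes sense. Applying the criterion for universal bijectivity recalled above — that $\Spec(B\otimes_A k)$ is a point for every map $A\to k$ to a field — and noting that any such $A\to k$ restricts to an $R''\to k$ that factors through $k(\fq)$ with $\fq\colonequals\Ker(R''\to k)$, the same bookkeeping as in the proof of Lemma~\ref{le:chu} shows that $\varphi$ is universally bijective if and only if $\varphi\otimes_{R''}k(\fq)$ is universally bijective for every $\fq\in\Spec R''$. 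For the second step, functoriality of the base-change maps in group cohomology, applied to $R'\xra{f}R''\xra{q}k(\fq)$, yields the factorisation
\[
H^*(\overline f_\fq)=H^*(q)\circ\bigl(\varphi\otimes_{R''}k(\fq)\bigr)
\]
of maps $H^*(G,R')\otimes_{R'}k(\fq)\to H^*(G,k(\fq))$, in which $H^*(q)\colon H^*(G,R'')\otimes_{R''}k(\fq)\to H^*(G,k(\fq))$ is precisely the comparison map~\eqref{eq:kappa} attached to $G_{R''}$ over $R''$ and the residue field $k(\fq)$. By Theorem~\ref{th:kappa}, applied with $R''$ and $G_{R''}$ in place of $R$ and $G$, this $H^*(q)$ is a universal homeomorphism. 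Since universal homeomorphisms are stable under base change and composition, and since composing a ring map with a universal homeomorphism does not affect whether it is universally bijective, $\varphi\otimes_{R''}k(\fq)$ is universally bijective if and only if $H^*(\overline f_\fq)$ is. Combining the two equivalences gives (1)~$\Leftrightarrow$~(2).

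The one genuinely non-formal ingredient — hence the step I expect to be the crux — is that Theorem~\ref{th:kappa} must be invoked in its \emph{universal} form: knowing only that $\Spec H^*(q)$ is a homeomorphism would not suffice to descend universal bijectivity across the factor $H^*(q)$, since a separable field extension could still split spectra after a further base change. So the real substance of the lemma is carried by the universal-homeomorphism strengthening of Theorem~\ref{th:kappa}, equivalently, via~\ref{ch:uhom}, by the $F$-isomorphism property of $H^*(q)$ in positive characteristic; granting that, everything else is the routine manipulation with the characterisations of universally bijective maps that the surrounding text alludes to.
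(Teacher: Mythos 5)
Your decomposition $H^*(\overline f_\fq)=H^*(q)\circ\bigl(H^*(f)\otimes_{R''}k(\fq)\bigr)$ is correct, and the observation that the whole content lies in controlling the factor $H^*(q)\colon H^*(G,R'')\otimes_{R''}k(\fq)\to H^*(G,k(\fq))$ is sharp. The formal half of your argument — that $H^*(f)$ is universally bijective iff $H^*(f)\otimes_{R''}k(\fq)$ is for all $\fq\in\Spec R''$, because every graded map from the source to a field factors through some $A\otimes_{R''}k(\fq)$ — is fine, and your cancellation argument across a universal homeomorphism is also correct. The problem is the step where you dispose of $H^*(q)$.

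You invoke Theorem~\ref{th:kappa}, applied to the group scheme $G_{R''}$ over the base $R''$, to conclude that $H^*(q)$ is a universal homeomorphism. This is circular in the paper's own logical order: Theorem~\ref{th:kappa} is deduced from Theorem~\ref{th:uhom-main} (see \ref{ch:uhom-remarks}), and the very first sentence of the proof of Theorem~\ref{th:uhom-main}(1) reads ``Given Lemma~\ref{le:ubij} it suffices to consider the case $R'=k$.'' So Lemma~\ref{le:ubij} is an input to Theorem~\ref{th:kappa}, not a consequence of it, and you cannot use the latter to prove the former. There is a second, independent obstruction: the lemma is stated for an arbitrary map $f\colon R'\to R''$ of commutative rings, so $R''$ need not be noetherian, whereas Theorem~\ref{th:kappa} — and, more to the point, van der Kallen's finite-generation input on which its proof rests — is only available over a noetherian base. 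Note that this genuinely distinguishes the present lemma from the neighbouring Lemma~\ref{le:chu}: there the base change $K\to K_\fp$ is flat, so $H^*(f\otimes_K K_\fp)$ literally equals $H^*(f)\otimes_K K_\fp$ and no comparison map appears, which is why that lemma really is formal. Here $R''\to k(\fq)$ is not flat, $H^*(q)$ is a nontrivial Künneth comparison map, and your argument as written does not have a non-circular source for its good behaviour. You need either a direct, self-contained argument that $H^*(q)$ is a universal homeomorphism (drawing only on the already-available Lemmas~\ref{le:uhom-nilpotent} and~\ref{le:uhom-nzd}, and confronting the noetherianness issue), or a different route to the equivalence that does not pass through $H^*(q)$ at all.
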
 

\begin{chunk}
\label{ch:vanderkallen}
Recall that $R$ is a noetherian commutative ring. Van der Kallen~\cite[Theorem~1]{vanderKallen:2023a} proves the following results:
\begin{enumerate}
\item
There exists an integer $d\ge 1$ with
\[
\label{eq:torsion-in-HA}
d\cdot H^{\geqslant 1}(G,-)=0 \quad\text{on $\Mod R$}\,.
\]
\item
The $R$-algebra $H^*(G,R)$ is finitely generated, and hence noetherian, and the $H^*(G,R)$-module $H^*(G,M)$ is finitely generated whenever $M$ is a finitely generated $G$-module.
\end{enumerate}
These results are crucial to all that follows. When $G$ is a finite group, one can take $d$ to be the order of the group. The statement is also obvious when $d\cdot R=0$, so the essential case is when the structure map $\bbZ\to R$ is one-to-one. 
\end{chunk}

\begin{lemma}
\label{le:uhom-nilpotent}
Any surjective map $f$ of $R$-algebras with $\Ker(f)$ nilpotent is in $\chu(G)$.
\end{lemma}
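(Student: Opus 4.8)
The plan is to reduce to the case of a square-zero kernel, analyse the resulting long exact sequence in cohomology, and then apply the criterion in~\ref{ch:uhom} after localising at each prime of $\bbZ$.

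First I would reduce to the case $\Ker(f)^2=0$. Write $I\colonequals\Ker(f)$, so $I^N=0$ for some $N\geq1$, and observe that $f$ is the composite of the surjections of $R$-algebras $R'/I^{j+1}\twoheadrightarrow R'/I^{j}$ for $j=N-1,\dots,1$, each having square-zero kernel $I^{j}/I^{j+1}$ (since $I^{2j}\subseteq I^{j+1}$ for $j\geq1$). As $\chu(G)$ is closed under composition by Lemma~\ref{le:chu}, it suffices to treat the case $I^2=0$. Then $I$ is an $R''$-module and $0\to I\to R'\xrightarrow{f}R''\to0$ is a short exact sequence of trivial $G$-modules, yielding a long exact sequence
\[
\cdots\to H^i(G,I)\xrightarrow{\ \iota_*\ }H^i(G,R')\xrightarrow{\ \alpha^i\ }H^i(G,R'')\xrightarrow{\ \delta^i\ }H^{i+1}(G,I)\to\cdots.
\]
Since the natural map $H^*(G,R')\to H^*(G,R')\otimes_{R'}R''$ is surjective, $\mathrm{Image}(H^*(f))=\mathrm{Image}(\alpha)$ and $\Ker(H^*(f))$ is the image of $\Ker(\alpha)=\mathrm{Image}(\iota_*)$. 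Because $I^2=0$, naturality of cup products shows $\mathrm{Image}(\iota_*)$ is a square-zero graded ideal of $H^*(G,R')$: the product of two elements of it factors through $H^*(G,I\otimes_RI)$ and the map induced by the multiplication $I\otimes_RI\to R'$, which vanishes. Hence $\Ker(H^*(f))$ is square-zero. Also $H^*(f)$ is an isomorphism in degree $0$, and by van der Kallen's theorem~\ref{ch:vanderkallen} there is an integer $d$ with $d\cdot H^{\geq1}(G,-)=0$; since $\mathrm{coker}(\alpha^i)$ embeds into $H^{i+1}(G,I)$ for all $i\geq0$ and $\alpha^0=f$ is onto, this gives $d\cdot H^i(G,R'')\subseteq\mathrm{Image}(H^*(f))$ for every $i$, and the same estimate holds after any base change.

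Next I would finish one prime at a time. By Lemma~\ref{le:chu} applied with $K=\bbZ$, it is enough to check that $f\otimes_{\bbZ}\bbQ$ and $f\otimes_{\bbZ}\bbZ_{(p)}$ lie in $\chu(G)$ for every prime $p$. Over $\bbQ$ the integer $d$ is a unit, so $H^{\geq1}(G,-)$ vanishes on $\bbQ$-algebras and $H^*(f\otimes\bbQ)$ is an isomorphism. Fix $p$, write $d=p^eu$ with $u$ a unit of $\bbZ_{(p)}$, so $p^e\cdot H^{\geq1}(G,-)=0$ on $\bbZ_{(p)}$-algebras, and set $\varphi\colonequals H^*(f\otimes_{\bbZ}\bbZ_{(p)})$, with target $B=H^*(G,R''\otimes_{\bbZ}\bbZ_{(p)})$. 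We have seen $\Ker\varphi$ is square-zero, giving condition (2) of~\ref{ch:uhom}. For condition (1), take as generators of $B$ all its homogeneous elements: those in degree $0$ lie in $\mathrm{Image}(\varphi)$, and for a homogeneous $b$ of positive degree we have $p^eb\in\mathrm{Image}(\varphi)$ by the cokernel estimate above (now with $\bbZ_{(p)}$-coefficients), so it remains to exhibit some power $b^{p^n}$ in $\mathrm{Image}(\varphi)=\mathrm{Image}(\alpha_p)$, equivalently $\delta(b^{p^n})=0$. Here the key input enters: the connecting map $\delta$ is a graded derivation for the cup product---the standard fact that the connecting map of a square-zero extension is a derivation. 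Granting this, if $\deg b$ is even then $\delta(b^{p^e})=p^e\,b^{p^e-1}\delta(b)=0$ because $\delta(b)$ has positive degree and is $p^e$-torsion; if $\deg b$ is odd then $\delta(b^2)=\delta(b)b-b\delta(b)=0$ (and $b^2=0$ when $p$ is odd). Since $\mathrm{Image}(\varphi)$ is a subring, this produces $b^{p^n}\in\mathrm{Image}(\varphi)$ for a suitable $n$, which after enlarging may also serve for $p^nb$. Thus~\ref{ch:uhom} applies, $\varphi$ is a universal homeomorphism, and $f\otimes_{\bbZ}\bbZ_{(p)}\in\chu(G)$; Lemma~\ref{le:chu} then yields $f\in\chu(G)$.

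The step I expect to be the main obstacle is establishing that $\delta$ is a graded derivation with respect to the cup product---morally this is the ``Bockstein is a derivation'' phenomenon, but it requires a careful comparison of the cup product with Yoneda composition, together with a check of the graded signs. The remaining ingredients---the filtration by powers of $I$, the computation of $\mathrm{Image}$ and $\Ker$ of $H^*(f)$ from the long exact sequence, and the bookkeeping with $d$ and with localisation at $p$---are routine.
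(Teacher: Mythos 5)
Your proof is correct and follows essentially the same route as the paper: reduce to a square-zero kernel, invoke the fact that the connecting map $\delta$ is a graded derivation (which the paper handles by citing \cite[Lemma~5.2]{Benson/Iyengar/Krause/Pevtsova:2022b} and \cite[Lemma~8.17]{Lau:2023a}), and combine with van der Kallen's $p$-power torsion bound to verify the criterion of~\ref{ch:uhom}. The only differences are organisational---you localise over $\Spec\bbZ$ before reducing to square-zero while the paper localises over $\Spec R'$ first---plus your slightly more explicit justification that $\Ker H^*(f)$ is square-zero (via vanishing of the multiplication $I\otimes I\to R'$), where the paper is terser.
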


\begin{proof}
 It suffices to check that $f_\fp$ is in $\chu(G)$ for each $\fp$ in $\Spec R'$, by Lemma~\ref{le:chu}, so we may assume $R'$ is a local ring. In particular, it contains $\bbQ$ or its residue field is of positive characteristic $p$ and  $R'$ is $p$-local, that is to say, the natural map $R'\to \bbZ_{(p)}\otimes_{\bbZ} R'$ is bijective.

In the former case $H^{\geqslant 1}(G,-)=0$ on $\Mod R'$, and there is nothing left to verify; this is by \ref{ch:vanderkallen}(1). So we may assume $R'$ is $p$-local. Then, again by \ref{ch:vanderkallen}(1), one gets 
\[
p^n\cdot H^{\geqslant 1}(G,-)=0\quad\text{on $\Mod R'$ and some $n\ge 1$.}
\]
Set $I\colonequals \Ker(f)$; the hypothesis is that $I^s=0$ for some $s\ge 1$. It suffices to verify the statement when $s=2$, and to that end we verify that $H^*(f)$ satisfies the conditions in \ref{ch:uhom}. Condition (2) clearly holds as $I$ is nilpotent. The exact sequence
\[
0\lra I\lra R'\lra R''\lra 0
\]
of $R'$-modules induces an exact sequence
\[
H^*(G,R') \xra{\ H^*(f)\ }  H^*(G,R'')\xra{\ \delta\ } H^*(G,I)\,.
\]
Arguing as in the proof \cite[Lemma~5.2]{Benson/Iyengar/Krause/Pevtsova:2022b}, or \cite[Lemma 8.17]{Lau:2023a}, yields that the connecting map $\delta$ is a derivation: For any $x,y$ in $H^*(G,R'')$, one has
\[
\delta(x\cup y) = \delta(x)\cup y + (-1)^{|x|} x\cup \delta(y)\,.
\]

Fix $x\in H^{\geqslant 1}(G,R')$.  When $|x|$ and $p$ are both odd, $x^2=0$ by the graded-commutativity of $H^*(G,R')$. So we can assume $|x|$ is even or $p=2$. Either way, it follows from the fact that $\delta\colon H^*(G,R'')\to H^{*+1}(G,I)$ is a derivation that $\delta(x^i) = ix^{i-1}\delta(x)$. Since $p^n\cdot H^{\geqslant 1}(G,-)=0$, we deduce that $\delta(x^{p^n})=0$ and hence $x^{p^n}$ is in the image of $H^*(f)$. Thus condition (1) of \ref{ch:uhom} is also satisfied.
\end{proof}

A surjective map $R'\to R''$ is a \emph{complete intersection} if its kernel can be generated by a regular sequence.

\begin{lemma}
\label{le:uhom-nzd}
Any surjective complete intersection map $f\colon R'\to R''$ of $R$-algebras with $R'$ noetherian is in $\chu(G)$. 
\end{lemma}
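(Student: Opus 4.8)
The plan is to reduce to a hypersurface section and then bootstrap through the truncations $R'/(x^n)$.

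\emph{Reductions.} Factoring $f$ as the tower $R'\to R'/(x_1)\to R'/(x_1,x_2)\to\cdots\to R''$, each stage is the quotient of a noetherian $R$-algebra by a single nonzerodivisor, so by Lemma~\ref{le:chu} it suffices to treat $R''=R'/(x)$ with $x$ a nonzerodivisor in $R'$. Applying Lemma~\ref{le:chu} with $K=R'$ we may assume $R'$ is local; if $x$ is a unit then $R''=0$ and there is nothing to prove, so let $x$ lie in the maximal ideal. If the residue field of $R'$ has characteristic $0$, then every nonzero integer---in particular van der Kallen's $d$ of \ref{ch:vanderkallen}---is a unit in $R'$, so $H^{\ge 1}(G,-)$ vanishes on $\Mod R'$ and on $\Mod R''$; then $H^*(f)$ is the identity map of $R''$ in degree $0$ and zero in positive degrees, hence a universal homeomorphism. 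So assume the residue characteristic is $p>0$ and write $d=p^m u$ with $u$ prime to $p$; then $u$ is a unit in $R'$, so $p^m\cdot H^{\ge 1}(G,-)=0$ on $\Mod R'$, and likewise on $\Mod R'/(x^n)$ for all $n$. If $m=0$ we conclude as in the characteristic $0$ case, so take $m\ge 1$.

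\emph{The bootstrap.} We claim $f_n\colon R'\to R'/(x^n)$ lies in $\chu(G)$ for $n\gg 0$. Fix $n$, let $\pi_n^*\colon H^*(G,R')\to H^*(G,R'/(x^n))$ be the induced map, and let $\delta_n\colon H^*(G,R'/(x^n))\to H^{*+1}(G,R')$ be the connecting map of $0\to R'\xra{x^n}R'\to R'/(x^n)\to 0$, where $(x^n)\cong R'$ via multiplication by $x^n$. The long exact sequence shows that $H^*(f_n)$ is injective with image $\Ker\delta_n$, that $\Ker\pi_n^*=x^nH^*(G,R')$, and that $\mathrm{Im}\,\delta_n$ is the submodule of $x^n$-torsion elements of $H^{*+1}(G,R')$. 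We check the criterion \ref{ch:uhom} for $\varphi=H^*(f_n)$: condition (2) holds as $\varphi$ is injective, and for (1) we take all homogeneous elements of $H^*(G,R'/(x^n))$ as generators over $H^*(G,R')/x^nH^*(G,R')$. Those of degree $0$ lie in the image, since $\varphi$ is an isomorphism there; a homogeneous $b$ of degree $\ge 1$ has $p^m b=0$, so it remains to see $b^{p^m}\in\mathrm{Im}\,\varphi=\Ker\delta_n$. The square-zero extension $0\to(x^n)/(x^{2n})\to R'/(x^{2n})\to R'/(x^n)\to 0$ has a connecting map that is a derivation, exactly as in the proof of Lemma~\ref{le:uhom-nilpotent}; identifying $(x^n)/(x^{2n})\cong R'/(x^n)$ by multiplication by $x^n$, naturality of connecting maps identifies this derivation with $\pi_n^*\circ\delta_n$. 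Since $\pi_n^*\delta_n$ is a derivation and $\pi_n^*\delta_n(b)$ lies in $H^{\ge 2}(G,R'/(x^n))$, which is annihilated by $p^m$, we get $\pi_n^*\delta_n(b^{p^m})=p^m b^{p^m-1}\pi_n^*\delta_n(b)=0$. Hence $\delta_n(b^{p^m})\in\Ker\pi_n^*=x^nH^{*+1}(G,R')$, and since it lies in $\mathrm{Im}\,\delta_n$ it also lies in $\gam_{(x)}H^{*+1}(G,R')$, the $x$-power-torsion submodule. By the Artin--Rees lemma, applied to the ideal $(x)$ of the noetherian ring $H^*(G,R')$ (noetherian by \ref{ch:vanderkallen} over $R'$) and to the submodule $\gam_{(x)}H^{*+1}(G,R')$ of the finitely generated module $H^{*+1}(G,R')$, there is an $n_0$, uniform in the internal degree, with $x^nH^{*+1}(G,R')\cap\gam_{(x)}H^{*+1}(G,R')=0$ for all $n\ge n_0$. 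For such $n$ this forces $\delta_n(b^{p^m})=0$, so $b^{p^m}\in\mathrm{Im}\,\varphi$, and \ref{ch:uhom} shows that $H^*(f_n)$ is a universal homeomorphism; that is, $f_n\in\chu(G)$.

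\emph{Conclusion.} Fix $n\ge\max(n_0,2)$. The map $R'/(x^n)\to R'/(x)$ has nilpotent kernel $(x)/(x^n)$, so it lies in $\chu(G)$ by Lemma~\ref{le:uhom-nilpotent}; composing with $f_n$ via Lemma~\ref{le:chu} shows $f\colon R'\to R'/(x)$ is in $\chu(G)$, and undoing the reductions completes the argument. The delicate point is the choice of square-zero extension in the bootstrap: using $(x^n)/(x^{2n})$ rather than $(x^n)/(x^{n+1})$ is what places the indeterminacy of $\delta_n(b^{p^m})$ inside $x^nH^{*+1}(G,R')$, so that pushing $n$ past the Artin--Rees and torsion bounds of the finitely generated $H^*(G,R')$-module $H^{*+1}(G,R')$ actually annihilates it; the rest---injectivity of $H^*(f_n)$, the derivation property, and the reduction to the local hypersurface case---is routine.
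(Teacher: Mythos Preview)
Your proof is correct and follows the same overall strategy as the paper's---reduce to a local ring and a single nonzerodivisor, exploit noetherianity of $H^*(G,R')$ to bound the $x$-torsion, and then use Lemma~\ref{le:uhom-nilpotent} to pass from $R'/(x^n)$ back to $R'/(x)$---but the technical heart differs. The paper, following Lau, works directly with $R'\to R'/t$ after replacing $t$ by a power so that the chain of torsion ideals $J_i=\Ker(t^i)$ has stabilised at $J_1$; it then carries out an explicit computation in the dg algebra $\End_G(P)$ to show that $\delta([z]^{p^n})=[tw]$ with $[w]\in J_2=J_1$, whence it vanishes. You instead show that $f_n\colon R'\to R'/(x^n)$ is in $\chu(G)$ for $n$ large by a naturality trick: the map of short exact sequences from $0\to R'\xra{x^n}R'\to R'/(x^n)\to 0$ to the square-zero extension $0\to (x^n)/(x^{2n})\to R'/(x^{2n})\to R'/(x^n)\to 0$ identifies the latter's connecting map with $\pi_n^*\circ\delta_n$, which is therefore a derivation by the argument already used in Lemma~\ref{le:uhom-nilpotent}. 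This neatly sidesteps the dg-algebra computation. Your condition $x^nH^*\cap\gam_{(x)}H^*=0$ for $n\ge n_0$ is equivalent to the paper's $J_1=J_2$ after the substitution $t\mapsto t^s$, so the two arguments are reorganisations of the same underlying idea; yours is somewhat more elementary in that it stays at the level of long exact sequences and never unpacks an endomorphism dg algebra. One minor remark: invoking Artin--Rees is unnecessary, since noetherianity of $H^*(G,R')$ already gives that the torsion ideal $\gam_{(x)}H^*(G,R')$ is finitely generated and hence annihilated by a fixed power $x^N$, which directly yields the desired vanishing for $n\ge N$.
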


\begin{proof}
Arguing as in the proof of Lemma~\ref{le:uhom-nilpotent} we can assume  $R'$ is local, with residue field of positive characteristic $p$ and that $p^n\cdot H^{\geqslant 1}(G,-)=0$ on $\Mod R'$. Moreover, since $\chu(G)$ is closed under compositions it suffices to consider the case $R''=R'/t$, where $t$ is not a zero-divisor in $R'$. At this point one can argue as in the proof of \cite[Lemma~8.21]{Lau:2023a} to deduce the desired statement. Here is a sketch of the argument.

For each integer $i\ge 1$ set
\[
J_i\colonequals \Ker (H^*(G,R')\xra{\ t^i\ } H^*(G,R''))\,.
\]
The ring $H^*(G,R')$ is noetherian, by van der Kallen's result~\ref{ch:vanderkallen}, so the chain of ideals $J_1\subseteq J_2\subseteq \cdots$ stabilises. Fix an integer $s$ such that $J_i=J_{i+1}$ for $i\ge s$. Since the map $R'/t^s\to R'/t$ is in $\chu(G)$, by Lemma~\ref{le:uhom-nilpotent}, it suffices to verify that the map $R'\to R'/t^s$ is in $\chu(G)$. Thus, replacing $t$ by $t^s$, we can assume $J_1=J_i$ for $i\ge 1$. The sequence of $G$-modules
\[
0\lra R'\xra{\  t\ } R'\xra{\ f\ } R'/t\to 0
\]
yields the exact sequence of $H^*(G,R')$-modules
\[
0\lra R'/t\otimes_{R'} H^*(G,R') \xra{\ H^*(f)\ } H^*(G,R'/t) \xra{\ \delta\ } J_1\lra 0\,.
\]
It suffices to verify $\delta(x^{p^n})=0$ for $x\in H^{\geqslant 1}(G,R'/t)=0$; see \ref{ch:uhom}.

Let $P$ be a projective resolution of $R'$ as a $G_{R'}$-module. As $t$ is not a zero-divisor on $R'$, the complex $P/tP$ is a projective resolution of $R'/t$ as a $G_{R'/t}$-module, and the exact sequence above is induced by the exact sequence
\[
0\lra \End_G(P) \lra \End_G(P) \lra \End_G(P/tP)\lra 0\,.
\]
The product on $H^*(G,R')$ and $H^*(G,R'/t)$ is induced by the composition product on the differential graded algebras $\End_G(P)$ and $\End_G(P/tP)$, respectively, and the map $H^*(f)$ is induced by the map $\End_G(P)\to \End_G(P/tP)$ of dg algebras.
Fix a cycle $z$ in $\End_G(P/tP)$ and a pre-image  $\tilde{z}$ in $\End_G(P)$; then $\delta([z]) = [d(\tilde{z})/t]$. The graded-commutativity of the ring $H^*(G,R)$ and the fact that $p^n H^{\geqslant 1}(G,R')=0$ yields $\delta([z]^{p^n})= [tw]$ for some $w\in\End_G(P)$; see \cite[Lemma~8.21]{Lau:2023a} for details. Since $td(w) = d(tw) =0$ in $\End_G(P)$ and $t$ is not a zero-divisor,  $d(w)=0$. Then
\[
t^2[w] = t[tw] = t \delta([z]^{p^n}) =0
\]
since $t\cdot \End_G(P/tP)=0$. So $[w]$ is in $J_2$. Since $J_2=J_1$ we deduce that $t[w]=0$, that is to say, that $\delta([z]^{p^n})=0$, as desired.
\end{proof}

A map $R\to R'$ of commutative rings is said to be \emph{essentially of finite type} if $R'$ is a localisation of a finitely generated $R$-algebra. 

\begin{theorem}
\label{th:uhom-main} 
Let $R$ be a noetherian commutative ring, $G$ a finite flat group scheme over $R$, and $f\colon R\to R'$ a map of commutative rings. The following statements hold.
\begin{enumerate}[\quad\rm(1)]
\item
$H^*(f)$ is universally bijective; 
\item
$H^*(f)$ is a universal homeomorphism if $f$ is essentially of finite type.
\end{enumerate}
\end{theorem}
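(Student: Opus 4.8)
The strategy is to prove~(2) first and then deduce~(1). For~(1), by Lemma~\ref{le:ubij} it is enough to show that $H^*(\overline{f}_\fq)$ is universally bijective for every $\fq\in\Spec R'$. Such a map factors through the residue field $k(\fp)$ of $R$ at $\fp\colonequals\Ker(\overline{f}_\fq)$, and $R\to k(\fp)$ is essentially of finite type, so~(2) applies to it; since the induced map on cohomology is the map $\phi_{k(\fp)}$ of~\eqref{eq:kappa}, this $\phi_{k(\fp)}$ is a universal homeomorphism, and $H^*(\overline{f}_\fq)$ is obtained from it by the flat base change $k(\fp)\to k(\fq)$, hence is again one, in particular universally bijective.

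Turning to~(2), suppose $f$ is essentially of finite type. Writing $R'$ as a localisation of $R[x_1,\dots,x_n]/I$, factor $f$ as $R\to B\to R'$, where $B$ is the corresponding localisation of the polynomial ring $R[x_1,\dots,x_n]$; then $R\to B$ is flat, hence in $\chu(G)$ by Lemma~\ref{le:chu}, and $B\to R'$ is surjective with $B$ noetherian. Since $\chu(G)$ is closed under composition, it suffices to show every surjection $B\twoheadrightarrow B/J$ with $B$ noetherian is in $\chu(G)$. Using Lemma~\ref{le:chu} with $K=B$ we may assume $B$ is local, and by Lemma~\ref{le:uhom-nilpotent} (applied both to $B\to B_{\mathrm{red}}$ and to $B/J\to (B/J)_{\mathrm{red}}$, together with the elementary cancellation properties of universal homeomorphisms) we may assume in addition that $B$ is reduced and $J$ is radical.

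The core is then a dévissage in the style of Benson--Habegger~\cite{Benson/Habegger:1987a} and Lau~\cite[\S8]{Lau:2023a}. If $V(J)$ is irreducible --- so $J$ is prime and $B/J$ is a domain --- either $J$ is not a minimal prime of $B$, in which case prime avoidance supplies a nonzerodivisor $t\in J$, so that $B\to B/t$ is a complete intersection (in $\chu(G)$ by Lemma~\ref{le:uhom-nzd}) of strictly smaller dimension and the surjection $B/t\to B/J$ is handled inductively; or $J$ is one of several minimal primes $\fq_1=J,\fq_2,\dots,\fq_r$ of $B$. In that case, and whenever $V(J)$ is reducible, one splits the relevant radical ideal as an intersection $I_1\cap I_2$ with $I_1,I_2$ strictly larger, and feeds the Mayer--Vietoris sequence of
\[
0\lra B/(I_1\cap I_2)\lra B/I_1\oplus B/I_2\lra B/(I_1+I_2)\lra 0
\]
into $H^*(G,-)$. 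Van der Kallen's finiteness and the torsion bound $d\cdot H^{\geqslant 1}(G,-)=0$ of~\ref{ch:vanderkallen} force the connecting maps to be controlled, so the long exact sequence identifies $\Spec H^*(G,B/(I_1\cap I_2))$ with the union of the spectra of the three smaller pieces along $\Spec H^*(G,B/(I_1+I_2))$, compatibly with the comparison maps to the analogous decomposition of $\Spec\bigl(H^*(G,B)/(I_1\cap I_2)H^*(G,B)\bigr)$; a Quillen-type argument then reduces the claim for $I_1\cap I_2$ to the claims for $I_1$, $I_2$ and $I_1+I_2$. Running this alongside a suitable induction on $\dim B/J$ and on the number of irreducible components of $V(J)$ reduces everything to the complete-intersection case already treated. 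Part~(1) follows, completing the proof.

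I expect the genuine difficulty to be this dévissage: organising the nested inductions so that the invariants actually drop, and making the Quillen-type comparison of spectra precise --- this is where the torsion bound of van der Kallen enters essentially. In contrast to the case of a finite group, where $G$ and its cohomology descend to a finitely generated $\bbZ$-algebra so that one may work over an excellent base and invoke the classical Benson--Habegger argument, no such spreading-out is available here in general (cf.\ Example~\ref{ex:noncomm}); the reductions above, powered by Lemmas~\ref{le:uhom-nilpotent},~\ref{le:uhom-nzd},~\ref{le:chu} and van der Kallen's theorem, must substitute for it.
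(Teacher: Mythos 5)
Your plan inverts the paper's order: you propose to prove (2) first by a d\'evissage on surjections, then deduce (1). The paper does the opposite, and the reason matters. Proving (1) directly is the easy part: by Lemma~\ref{le:ubij} one reduces to $R' = k$ a field, then (after the flat reductions) to $R$ local with $f\colon R\to k$ the residue-field quotient, and one inducts on $\dim R$ using Lemma~\ref{le:uhom-nilpotent} for the base case and Lemma~\ref{le:uhom-nzd} plus a nonzerodivisor $t\in\fm$ to drop the dimension. Because the target is a field and $R$ is local, one never meets multiple components or needs any Mayer--Vietoris splitting. Having (1), part (2) is nearly immediate: after factoring $f$ as (flat)$\circ$(surjective), a surjection from a noetherian ring is a \emph{finite} ring map, so by van der Kallen~\ref{ch:vanderkallen}(2) the induced map on cohomology $H^*(G,R'')\to H^*(G,R')$ is finite, hence $\Spec H^*(f)$ is universally closed; a universally closed, universally bijective map is a universal homeomorphism. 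You miss this finiteness observation entirely, and as a result take a vastly harder road.

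The concrete gap is the Mayer--Vietoris step. The claim that the long exact sequence in $H^*(G,-)$ associated to
$0\to B/(I_1\cap I_2)\to B/I_1\oplus B/I_2\to B/(I_1+I_2)\to 0$
``identifies $\Spec H^*(G,B/(I_1\cap I_2))$ with the union of the spectra of the three smaller pieces'' is asserted, not proved, and is not obviously true: the connecting maps are not ring maps, $H^*(G,B/I_1)\oplus H^*(G,B/I_2)$ carries no ring structure compatible with $H^*(G,B/(I_1\cap I_2))$, and the spectrum of a graded ring is sensitive to multiplicative structure that the long exact sequence of modules does not track. Quillen-type patching of spectra over a poset of closed subsets requires substantially more apparatus (limit over a category of components, $F$-isomorphism of the comparison to the inverse limit, etc.). You flag this yourself as ``where the genuine difficulty'' lies, but nothing in the proposal resolves it, and your correct (1)$\Leftarrow$(2) reduction makes the whole of part~(1) rest on this incomplete step. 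The paper's finiteness trick removes the need for any such d\'evissage; the d\'evissage it does carry out (Lemmas~\ref{le:uhom-nilpotent} and~\ref{le:uhom-nzd} inside a dimension induction on a \emph{local} ring, after reduction to a residue field) is exactly calibrated to avoid reducibility of the target.

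One smaller note: your comment that ``no spreading-out is available here'' is not really the dividing line. The paper's argument doesn't invoke spreading-out even in the finite group case; the $R$-linear inputs from van der Kallen~\ref{ch:vanderkallen} (torsion in positive degrees and finite generation) replace the role of a finitely generated base, and everything else is local commutative algebra. Your instinct that van der Kallen's theorem must enter essentially is right, but the place it enters decisively in part~(2) is finite generation of $H^*(G,-)$ on finite modules, not the control of Mayer--Vietoris connecting maps.
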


\begin{proof}

(1) Given Lemma~\ref{le:ubij} it suffices to consider the case when $R'=k$, a field. We claim that then the map $H^*(f)$ is even a universal homeomorphism. To see this, set $\fp\colonequals \Ker(f)$, a prime ideal in $R$. The map $f\colon R\to k$ factors as 
\[
R\lra R_\fp \lra k(\fp)\lra k\,,
\]
where the first two maps are the natural ones. Since both maps $R\to R_\fp$ and  $k(\fp)\to k$ are flat and hence in $\chu(G)$, it suffices to verify that the map $R_\fp \to k(\fp)$ is in $\chu(G)$. We can thus assume $R$ is local and $f\colon R\to k$ is the quotient map, with $k$ the residue field of $R$.

We induct on $\dim R$ to deduce that $f$ is in $\chu(G)$, and in particular universally bijective. The base case $\dim R=0$ is covered by Lemma~\ref{le:uhom-nilpotent} for then the ideal $\Ker(f)$ is nilpotent.

Suppose $\dim R\ge 1$ and set $I\colonequals \sqrt{(0)}$ the nilradical of $R$; it is a nilpotent ideal, since $R$ is noetherian. Thus the map $R\to R/I$ is in $\chu(G)$. Since the map $R\to k$ factors as $R\to R/I\to k$, it suffices to verify that $R/I\to k$ is in $\chu(G)$. So we can replace $R$ by $R/I$ and assume it is reduced. Then, since $\dim R\ge 1$, the maximal ideal of $R$ is not an associated prime and hence contains an element $t$ that is not a zero-divisor. The map $R\to k$ factors as 
\[
R\lra R/tR\lra k\,.
\]
The map on the left is in $\chu(G)$, by Lemma~\ref{le:uhom-nzd}, and the one on the right is in $\chu(G)$ by the induction hypothesis, since
$\dim(R/tR)=\dim R-1$. We conclude that $f$ is in $\chu(G)$.

(2) Since $f$ is essentially of finite type,  it factors as $R\to R''\to R'$ where $R\to R''$ is the localisation of a finitely generated polynomial ring over $R$, and $R''\to R'$ is surjective. Since $R\to R''$ is flat it is in $\chu(G)$. We may thus replace $R$ by $R''$ and suppose $f$ is surjective, and hence a finite map. It follows that the map $H^*(f)$ is also finite, by van der Kallen's theorem~\ref{ch:vanderkallen}(2), and hence the induced map $\Spec H^*(f)$ is universally closed. Since $H^*(f)$ is universally bijective, by part (1), we conclude that it is a universal homeomorphism.
\end{proof}

\begin{chunk}
\label{ch:uhom-remarks}
Theorem~\ref{th:kappa} is the special case of Theorem~\ref{th:uhom-main} where $R'$ is a field. On the other hand, as can be seen from the proof, the latter result is proved by (a simple) reduction to the case where $R'$ is a field.
\end{chunk}

\section{Local cohomology and support}
\label{se:lch}

This section is a reminder about local cohomology functors and a general local-to-global principle for  triangulated categories; for details, see \cite{Benson/Iyengar/Krause:2008a, Benson/Iyengar/Krause:2011a}. 

\subsection*{Localisation and local cohomology}

Let $\cat T$ be a triangulated category with suspension $\Sigma$. Given
objects $X$ and $Y$ in $\cat T$, consider the graded abelian groups
\[ \Hom_{\cat T}^*(X,Y)=\bigoplus_{i\in\bbZ}\Hom_{\cat T}(X,\Sigma^i Y) \quad
\text{and}\quad
\End_{\cat T}^{*}(X)= \Hom_{\cat T}^{*}(X,X)\,.
\] Composition makes $\End_{\cat T}^{*}(X)$ a graded ring and
$\Hom_{\cat T}^{*}(X,Y)$ a left-$\End_{\cat T}^{*}(Y)$
right-$\End_{\cat T}^{*}(X)$ module.

Let $S$ be a graded-commutative noetherian ring.  In what follows we
will only be concerned with homogeneous elements and ideals in $S$. In
this spirit, `localisation' will mean homogeneous localisation, and
$\Spec S$ will denote the set of homogeneous prime ideals in $S$.

We say that a triangulated category $\cat T$ is \emph{$S$-linear} if for
each $X$ in $\cat T$ there is a homomorphism of graded rings
$\phi_X\colon S\to \End_{\cat T}^{*}(X)$ such that the induced left and
right actions of $S$ on $\Hom_{\cat T}^{*}(X,Y)$ are compatible in the
following sense: For any $r\in S$ and $\alpha\in\Hom^*_\cat T(X,Y)$, one
has
\[ \phi_Y(r)\alpha=(-1)^{|r||\alpha|}\alpha\phi_X(r)\,.
\]

An exact functor $F\colon \cat T\to\cat U$ between $S$-linear triangulated
categories is \emph{$S$-linear} if the induced map
\[ \Hom_{\cat T}^*(X,Y)\lra \Hom_{\cat U}^*(FX,FY)
\] of graded abelian groups is $S$-linear for all objects $X,Y$ in
$\cat T$.

In what follows, we fix a compactly generated $S$-linear triangulated
category $\cat T$ and write $\cat T^c$ for its full subcategory of compact
objects.

Let $V\subseteq \Spec S$ be a specialisation closed subset. An
$S$-module $M$ is \emph{$V$-torsion} if $M_\fp=0$ for all $\fp$ in
$\Spec S\setminus V$.  Analogously, an object $X$ in $\cat T$ is
\emph{$V$-torsion} if the $S$-module $\Hom_\cat T^*(C,X)$ is
$V$-torsion for all $C\in\cat T^c$. For an ideal $I\subseteq S$ we use
the more common term \emph{$I$-torsion} instead of $V(I)$-torsion.
The full subcategory of $V$-torsion objects
\[ 
\gam_{V}\cat T\coloneqq\{X\in\cat T\mid X \text{ is $V$-torsion} \}
\] 
is localising and the inclusion $\gam_{V}\cat T\subseteq \cat T$
admits a right adjoint, denoted $\gam_{V}$. Then each object
$X\in\cat T$ fits into a functorial exact triangle
\[
\gam_V X\lra X\lra L_V X\lra
\]
and $L_V$ denotes the corresponding localisation functor.

Fix a $\fp$ in $\Spec S$.  An $S$-module $M$ is \emph{$\fp$-local} if
the localisation map $M\to M_\fp$ is invertible, and an object $X$ in
$\cat T$ is \emph{$\fp$-local} if the $S$-module $\Hom_\cat T^*(C,X)$ is
$\fp$-local for all $C\in\cat T^c$.  Consider the full subcategory of
$\cat T$ of $\fp$-local objects
\[ 
\cat T_\fp\coloneqq\{X\in\cat T\mid X \text{ is $\fp$-local}\}
\] 
and the full subcategory of $\fp$-local and $\fp$-torsion objects
\[ 
\gam_\fp\cat T\coloneqq\{X\in\cat T\mid X \text{ is $\fp$-local and $\fp$-torsion} \}.
\] 
Note that $\gam_\fp\cat T\subseteq\cat T_\fp\subseteq\cat T$ are localising subcategories.  The inclusion $\cat T_\fp\to\cat T$ admits a left adjoint $X\mapsto X_\fp$, which identifies with the localisation functor with respect to the specialisation closed set $\{\fq\in\Spec S\mid \fq\not\subseteq \fp\}$, while the inclusion $\gam_\fp\cat T\to\cat T_\fp$ admits the right adjoint $X\mapsto \gam_{V(\fp)}X$. We denote  $\gam_\fp\colon\cat T\to\gam_\fp\cat T$ the composite of those adjoints; it is the \emph{local cohomology functor} with respect to $\fp$.

\subsection*{Support}

Let $\cat T$ and $S$ be as above. The \emph{support} of an object
$X\in\cat T$ is 
 \[
\supp_S X\colonequals \{\fp\in\Spec S\mid \gam_{\fp}X\ne 0\}\,.
\] 

The next result is an important input in the Section~\ref{se:stratification}.

\begin{lemma}
\label{le:supp-adjoint}
Let $\cat T$ and $\cat U$ be compactly generated $S$-linear
triangulated categories. Let $E\colon \cat T\to \cat U$ be an
$S$-linear exact functor that preserves arbitrary coproducts and
compact objects. With $F$ a right adjoint of $E$, for each
$X\in \cat T$ one has
\[
\supp_S E(X) = \supp_S FE(X)\,.
\]
\end{lemma}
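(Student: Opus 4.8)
The plan is to reduce the statement to two elementary facts: that both $E$ and $F$ commute with the local cohomology functors $\gam_\fp$, and that $E(X)$ is a retract of $EFE(X)$.

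First I would note that $F$ preserves arbitrary coproducts: since $E$ preserves coproducts and compact objects, for $C\in\cat T^c$ the object $EC$ is compact in $\cat U$, so for any family $(W_i)$ in $\cat U$ the canonical maps give
\[
\Hom_{\cat T}\Bigl(C,\coprod\nolimits_i FW_i\Bigr)\cong\coprod\nolimits_i\Hom_{\cat U}(EC,W_i)\cong\Hom_{\cat U}\Bigl(EC,\coprod\nolimits_i W_i\Bigr)\cong\Hom_{\cat T}\Bigl(C,F\bigl(\coprod\nolimits_i W_i\bigr)\Bigr)\,,
\]
and as $\cat T$ is compactly generated the comparison map $\coprod_i FW_i\to F(\coprod_i W_i)$ is invertible. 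Hence $E$ and $F$ are $S$-linear exact coproduct-preserving functors between compactly generated $S$-linear triangulated categories. For any such functor $G$ there is a natural isomorphism $G\circ\gam_\fp\cong\gam_\fp\circ G$ for each $\fp\in\Spec S$: by $S$-linearity $G$ sends the Koszul objects that generate $\gam_{V(\fp)}\cat T$ to the corresponding Koszul objects, and it commutes with the homotopy colimits defining $L_V$ and the localisations $(-)_\fp$, whence it commutes with $\gam_{V(\fp)}$, with $(-)_\fp$, and therefore with $\gam_\fp$; see \cite{Benson/Iyengar/Krause:2008a}. In particular $\gam_\fp Z=0$ implies $\gam_\fp G(Z)\cong G(\gam_\fp Z)=0$, so
\[
\supp_S G(Z)\subseteq\supp_S Z\qquad\text{for every object }Z\,.
\]

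Granting this, applying the inclusion with $G=F$ and $Z=E(X)$ gives $\supp_S FE(X)\subseteq\supp_S E(X)$. For the other direction, the triangle identity for the adjunction $E\dashv F$ reads $\eps_{E(X)}\circ E(\eta_X)=\id_{E(X)}$, with $\eta$ the unit and $\eps$ the counit, so $E(X)$ is a retract of $EFE(X)=E\bigl(FE(X)\bigr)$. Since $\gam_\fp$ is additive it preserves retracts, so $\gam_\fp EFE(X)=0$ forces $\gam_\fp E(X)=0$; this gives $\supp_S E(X)\subseteq\supp_S EFE(X)$, and the inclusion above with $G=E$ and $Z=FE(X)$ gives $\supp_S E\bigl(FE(X)\bigr)\subseteq\supp_S FE(X)$. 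Chaining these yields $\supp_S E(X)\subseteq\supp_S FE(X)$, and combined with the first inclusion we conclude $\supp_S E(X)=\supp_S FE(X)$.

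The only genuinely non-formal input is the commutation of $\gam_\fp$ with $E$ and $F$, and this is exactly where the hypotheses enter: $S$-linearity and coproduct-preservation of $E$, together with preservation of compactness (which is needed only to make $F$ coproduct-preserving). Once that naturality is in hand — quoting \cite{Benson/Iyengar/Krause:2008a} — the remainder is a formal manipulation with adjunction units and the exactness of $\gam_\fp$, so I expect no real obstacle beyond pinning down the precise reference for the commutation lemma.
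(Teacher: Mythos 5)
Your proof is correct and takes essentially the same route as the paper's: both arguments reduce to the fact (cited here as \cite[Proposition~7.2]{Benson/Iyengar/Krause:2012b}) that $\gam_\fp$ commutes with $E$ and, after noting $F$ preserves coproducts because $E$ preserves compacts, with $F$ as well. The only cosmetic difference is in packaging the final step: the paper observes that $\gam_\fp E(X) \cong E(\gam_\fp X)$ lies in the essential image of $E$ and that $F$ kills no nonzero object there, yielding $\gam_\fp E(X)=0 \iff \gam_\fp FE(X)=0$ directly; you instead split the equality of supports into two inclusions, obtaining one from $\supp_S G(Z)\subseteq\supp_S Z$ and the other from the triangle identity exhibiting $E(X)$ as a retract of $EFE(X)$. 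These are two phrasings of the same underlying fact, so the proofs are essentially interchangeable.
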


\begin{proof}

Since $E$ preserves compact objects, its right adjoint $F$ preserves coproducts. It thus follows from \cite[Proposition~7.2]{Benson/Iyengar/Krause:2012b} that the local cohomology functor $\gam_{\fp}$ commutes with $E$ and $F$, for  $\fp$ in $\Spec S$. In particular $\gam_\fp E(X)$ is isomorphic to $E(\gam_\fp X)$, and hence in the essential image of $E$. As $F$ is a right adjoint of $E$ it is conservative on the essential image of $E$, so one gets the first equivalence below: 
\[
\gam_\fp E(X)=0 \iff F(\gam_\fp E(X))=0 \iff  \gam_\fp FE(X) =0\,.
\]
The second equivalence holds by the observation above. This is as desired.
\end{proof}

\subsection*{Tensor triangulated categories}
From now on $\cat T$ is a rigidly-compactly generated triangulated category, with product $\otimes$ and unit $\one = \one_{\cat T}$. We assume that there is a central action of $S$ via a ring homomorphism $S\to\Hom^*_{\cat T}(\one,\one)$. By \cite[Theorem~8.2]{Benson/Iyengar/Krause:2008a}, this implies that for each specialisation closed subset $V$ of $\Spec S$ there are natural isomorphisms
\[
\gam_VX \cong X\otimes \gam_V\one \quad\text{and}\quad L_VX \cong X\otimes L_V\one\,.
\]
In particular $\gam_{\fp} X\cong X\otimes \gam_{\fp}\one$  for each $\fp\in\Spec S$.

For any class of objects $\cat X$ in $\cat T$ we write $\Loc^\otimes(\cat X)$ for the smallest localising tensor ideal of $\cat T$ containing $\cat X$. The following local-to-global result for tensor triangulated categories is
\cite[Theorem~3.6]{Benson/Iyengar/Krause:2011b}.

\begin{theorem}
\label{th:local-to-global}
\pushQED{\qed}   
For each object $X$ in $\cat T$ we have
\[ 
\Loc^\otimes(X)=\Loc^\otimes(\{\gam_{\fp}X\mid \fp\in\Spec
  S\})\,.\qedhere
  \]
\end{theorem}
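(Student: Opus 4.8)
This is \cite[Theorem~3.6]{Benson/Iyengar/Krause:2011b}, so the task is to reconstruct its proof. The inclusion $\Loc^\otimes(\{\gam_{\fp}X\mid\fp\in\Spec S\})\subseteq\Loc^\otimes(X)$ is immediate from the identification $\gam_{\fp}X\cong X\otimes\gam_{\fp}\one$ of \cite[Theorem~8.2]{Benson/Iyengar/Krause:2008a}, since then each $\gam_{\fp}X$ lies in the localising tensor ideal generated by $X$. For the reverse inclusion it suffices to show $X\in\Loc^\otimes(\{\gam_{\fp}X\mid\fp\})$, and the plan is to reduce this to the tensor unit: once one knows $\one\in\Loc(\{\gam_{\fp}\one\mid\fp\in\Spec S\})$, the full subcategory $\{Y\in\cat T\mid X\otimes Y\in\Loc^\otimes(\{\gam_{\fp}X\mid\fp\})\}$ is localising and contains every $\gam_{\fp}\one$, hence contains $\one$, and therefore $X=X\otimes\one$ lies in $\Loc^\otimes(\{\gam_{\fp}X\mid\fp\})$. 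So the whole statement comes down to proving $\one\in\Loc(\{\gam_{\fp}\one\mid\fp\in\Spec S\})$.

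Since $\Spec S\setminus\Spec S=\emptyset$, every object of $\cat T$ is $\Spec S$-torsion, so $\gam_{\Spec S}=\id$ and $\one=\gam_{\Spec S}\one$; it is therefore enough to prove the more general claim that $\gam_{V}\one\in\Loc(\{\gam_{\fp}\one\mid\fp\in V\})$ for every specialisation closed $V\subseteq\Spec S$. The idea is to filter $\Spec S$ by the specialisation closed subsets $W_{i}=\{\fp\mid\dim S/\fp\le i\}$, $i\ge-1$; the point of this particular choice is that each layer $W_{i}\setminus W_{i-1}=\{\fp\mid\dim S/\fp=i\}$ is an \emph{antichain} of prime ideals, since two comparable primes have distinct coheights. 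When $\dim S<\infty$ this filtration is finite, and in the applications $S=H^{*}(G,R)$ one may as well assume this; in general one extends the filtration transfinitely and uses that $\gam_{(-)}\one$ is continuous along increasing unions of specialisation closed subsets, cf.\ \cite{Benson/Iyengar/Krause:2008a}. Setting $V_{i}\colonequals V\cap W_{i}$ gives an increasing chain of specialisation closed subsets with union $V$ and with each $V_{i}\setminus V_{i-1}$ an antichain, so it suffices to show $\gam_{V_{i}}\one\in\Loc(\{\gam_{\fp}\one\mid\fp\in V_{i}\})$ for each $i$, by induction on $i$ starting from the trivial case $i=-1$. The octahedral axiom applied to the natural map $\gam_{V_{i-1}}\one\to\gam_{V_{i}}\one$ yields an exact triangle whose third term is $L_{V_{i-1}}\gam_{V_{i}}\one$, and the inductive hypothesis disposes of $\gam_{V_{i-1}}\one$.

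The crux — and the step I expect to be the main obstacle — is to show that $L_{V_{i-1}}\gam_{V_{i}}\one$ lies in $\Loc(\{\gam_{\fp}\one\mid\fp\in A\})$, where $A\colonequals V_{i}\setminus V_{i-1}$. Using that $\gam_{V_{i}}$ and $L_{V_{i-1}}$ are smashing (tensoring with fixed objects) one first checks that $\gam_{\fp}L_{V_{i-1}}\gam_{V_{i}}\one\cong\gam_{\fp}\one$ for $\fp\in A$ while it vanishes for $\fp\notin A$, so that the natural comparison map $\coprod_{\fp\in A}\gam_{\fp}\one\to L_{V_{i-1}}\gam_{V_{i}}\one$ should be an equivalence. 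Because $\cat T$ is compactly generated, this map may be tested after applying $\Hom^{*}_{\cat T}(C,-)$ for $C$ compact, whereupon it becomes the purely commutative-algebraic statement that a graded $S$-module whose support lies in an antichain is the direct sum of its local cohomology modules at the points of that antichain; this in turn holds because such a module is the filtered colimit of its finitely generated submodules, each of Krull dimension zero and hence artinian, and an artinian module decomposes as the direct sum of its primary components. The delicate bookkeeping is to match the triangulated support of $L_{V_{i-1}}\gam_{V_{i}}\one$ with the module-theoretic supports of the groups $\Hom^{*}_{\cat T}(C,L_{V_{i-1}}\gam_{V_{i}}\one)$, which is where the full strength of the local-cohomology calculus of \cite{Benson/Iyengar/Krause:2008a} (orthogonality of $\gam_{\fp}$ at distinct primes, $\supp\gam_{V}\one=V$, and the smashing property) is used. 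Once this is in place, $L_{V_{i-1}}\gam_{V_{i}}\one\cong\coprod_{\fp\in A}\gam_{\fp}\one$ lies in the required subcategory, and combining the inductive step with the colimit over $i$ completes the proof.
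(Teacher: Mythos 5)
The paper proves this statement only by citation to \cite[Theorem~3.6]{Benson/Iyengar/Krause:2011b}, so there is no in-paper argument to match; I compare instead against the standard BIK development. Your outer skeleton is a reasonable reconstruction: the reduction via the tensor structure to showing $\one\in\Loc\bigl(\{\gam_\fp\one\mid\fp\in\Spec S\}\bigr)$ is correct, and attacking that via the filtration by the specialisation-closed sets $W_i=\{\fp\mid\dim S/\fp\le i\}$, whose successive differences are antichains, is sound.

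The genuine gap is in your ``crux'' paragraph. You reduce the claimed equivalence $\coprod_{\fp\in A}\gam_\fp\one\to L_{V_{i-1}}\gam_{V_i}\one$ to a ``purely commutative-algebraic statement'' by applying $\Hom^*_{\cat T}(C,-)$ for compact $C$, and then argue that a graded $S$-module with support in an antichain is a filtered colimit of finitely generated, Krull-dimension-zero, hence artinian, submodules. This breaks in two places. First, for $i>0$ the antichain $A_i=\{\fp\mid\dim S/\fp=i\}$ does \emph{not} consist of maximal ideals, so a nonzero finitely generated submodule meeting $A_i$ has Krull dimension $i>0$, is not artinian, and admits no primary direct-sum decomposition; in fact for $i>0$ a finitely generated module cannot have (small) support contained in $A_i$ at all, since for such modules small and Zariski support coincide and the latter is specialisation closed. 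Second, $\Hom^*_{\cat T}(C,\gam_\fp Y)$ is not literally the local cohomology at $\fp$ of $\Hom^*_{\cat T}(C,Y)$; the comparison involves derived functors and a spectral sequence, so the reduction to a statement about underived module local cohomology is not automatic. The correct way to finish is the one you begin but do not carry through: you have already computed $\gam_\fq L_{V_{i-1}}\gam_{V_i}\one\cong\gam_\fq\one$ for $\fq\in A$ and $\gam_\fq L_{V_{i-1}}\gam_{V_i}\one=0$ otherwise; since each $\gam_\fq$ is smashing (tensoring with a fixed idempotent, by \cite[Theorem~8.2]{Benson/Iyengar/Krause:2008a}) and hence commutes with coproducts, while $\gam_\fq\gam_\fp\one=0$ for distinct incomparable $\fp,\fq$, the cofibre $Z$ of $\coprod_{\fp\in A}\gam_\fp\one\to L_{V_{i-1}}\gam_{V_i}\one$ satisfies $\gam_\fq Z=0$ for \emph{every} $\fq\in\Spec S$; the detection theorem of \cite[\S 5]{Benson/Iyengar/Krause:2008a} then gives $Z=0$, with no module-theoretic decomposition needed. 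Finally, for $\dim S=\infty$ the transfinite filtration is better replaced by noetherian induction on specialisation-closed subsets, which is what BIK actually use; this is not an issue for $S=H^*(G,R)$ in the present paper.
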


\section{Building and fibres}
\label{se:building}

Let $R$ be a noetherian commutative ring and $G$ a finite flat group scheme over $R$. To lighten the notation we use the  abbreviations
\[
\Rep(G,R) \colonequals \lfK(\Prj G,R)\quad \text{and}\quad \rep(G,R) \colonequals \bfD^b(\rmod(G,R))\,.
\]
 One has $\Rep(G,R)^c = \rep(G,R)$, by Proposition~\ref{pr:KProj}. Moreover, when $R$ is a field, one has $\Rep(G,R) = \bfK(\Prj G)$, by Proposition~\ref{pr:KProj-regular}.

Let $k(\fp)$ be the residue field of $R$ at $\fp\in \Spec R$, and for any $G$-complex $X$ set
\[
\fibre X{\fp} \colonequals X\otimes_R k(\fp)\,.
\]
The assignment $X\mapsto \fibre X{\fp}$ induces an exact functor
\[
\pi_{\fp}\colon \Rep(G, R) \lra \Rep(G_{k(\fp)}, k(\fp)) =\bfK(\Prj G_{k(\fp)})\,.
\]
Here is the main result in this section.

\begin{theorem}
\label{th:lg-kproj-ha}
Let $R$ be a noetherian commutative ring, $G$ a finite flat group scheme over $R$, and fix $X,Y$ in $\Rep(G,R)$. The following statements are equivalent.
\begin{enumerate}[\quad\rm(1)]
\item
 $X\in  \Loc^{\otimes}(Y)$ in $\Rep(G,R)$;
\item
 $\fibre X\fp\in \Loc^{\otimes}(\fibre Y{\fp})$ in $\Rep(G_{k(\fp)}, k(\fp))$ for each $\fp\in \Spec R$.
\end{enumerate}
\end{theorem}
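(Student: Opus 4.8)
The plan is the following. The implication (1)$\Rightarrow$(2) is formal: the fibre functor $\pi_\fp$ is exact, preserves arbitrary coproducts, and is symmetric monoidal with $\pi_\fp(\one)\simeq\one$, so it carries the localising tensor ideal $\Loc^\otimes(Y)$ into $\Loc^\otimes(\pi_\fp Y)=\Loc^\otimes(\fibre Y{\fp})$.

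For (2)$\Rightarrow$(1), write $S:=H^*(G,R)=\End^*_{\Rep(G,R)}(\one)$; by van der Kallen's theorem (\ref{ch:vanderkallen}) this is a graded-commutative noetherian ring, and it acts centrally on the rigidly-compactly generated tensor triangulated category $\Rep(G,R)$ (Proposition~\ref{pr:rigidity}). One would first apply the local-to-global principle (Theorem~\ref{th:local-to-global}), which gives $\Loc^\otimes(X)=\Loc^\otimes(\{\gam_\fq X\mid\fq\in\Spec S\})$; since $\gam_\fq Y=Y\otimes\gam_\fq\one$ lies in $\Loc^\otimes(Y)$ and the exact coproduct-preserving functor $(-)\otimes\gam_\fq\one$ sends $\Loc^\otimes(Y)$ into $\Loc^\otimes(\gam_\fq Y)$, it suffices to show
\[
\gam_\fq X\in\Loc^\otimes(\gam_\fq Y)\qquad\text{for every }\fq\in\Spec S .
\]

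Next one fixes $\fq$ and sets $\fp:=\fq\cap R\in\Spec R$, using the structure map $R\to S$. The scheme-theoretic fibre of $\Spec S\to\Spec R$ over $\fp$ is $\Spec(S\otimes_R k(\fp))$, which by Theorem~\ref{th:kappa} is homeomorphic to $\Spec H^*(G_{k(\fp)},k(\fp))$; let $\bar\fq$ be the corresponding homogeneous prime of $H^*(G_{k(\fp)},k(\fp))$. The fibre functor $\pi_\fp$ is $S$-linear, exact, symmetric monoidal, and preserves coproducts and compact objects; hence, as in the proof of Lemma~\ref{le:supp-adjoint}, it commutes with local cohomology, and on $\Rep(G_{k(\fp)},k(\fp))$ the functor $\gam_\fq$ for the $S$-action coincides with $\gam_{\bar\fq}$, so that $\pi_\fp(\gam_\fq X)\simeq\gam_{\bar\fq}(\fibre X{\fp})$ and likewise for $Y$. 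A short computation using the $V(\fq)$-torsion and $\fq$-locality of $\gam_\fq\one$ shows that $\pi_{\fp'}(\gam_\fq\one)=0$, and hence $\pi_{\fp'}(\gam_\fq Z)=0$ for all $Z$, whenever $\fp'\ne\fp$; combined with fibrewise detection (Proposition~\ref{pr:conserve}) this shows that $\pi_\fp$ is conservative on $\gam_\fq\Rep(G,R)$. Applying $\gam_{\bar\fq}=(-)\otimes\gam_{\bar\fq}\one$ to the hypothesis $\fibre X{\fp}\in\Loc^\otimes(\fibre Y{\fp})$ then yields $\pi_\fp(\gam_\fq X)\in\Loc^\otimes(\pi_\fp(\gam_\fq Y))$.

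The hard part will be the last step: deducing $\gam_\fq X\in\Loc^\otimes(\gam_\fq Y)$ from this fibrewise membership together with the conservativity of $\pi_\fp$ on $\gam_\fq\Rep(G,R)$. Here one follows the strategy of \cite{Benson/Iyengar/Krause/Pevtsova:2022b}: since $\Rep(G_{k(\fp)},k(\fp))$ is stratified by its cohomology ring (\cite{Benson/Iyengar/Krause/Pevtsova:2018a}), the subcategory $\gam_{\bar\fq}\Rep(G_{k(\fp)},k(\fp))$ is a minimal localising tensor ideal, and one transports this minimality back along $\pi_\fp$ — using $\pi_\fp$, its adjoints, and the fact that $\gam_\fq\one$ is a tensor-idempotent generator of $\gam_\fq\Rep(G,R)$ — to conclude that $\gam_\fq\Rep(G,R)$ is itself minimal. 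Granted this, either $\gam_\fq Y=0$, so that $\pi_\fp(\gam_\fq X)=0$ and $\gam_\fq X=0$ by conservativity, or $\Loc^\otimes(\gam_\fq Y)=\gam_\fq\Rep(G,R)$ already contains $\gam_\fq X$; in either case $\gam_\fq X\in\Loc^\otimes(\gam_\fq Y)$, and letting $\fq$ vary gives $X\in\Loc^\otimes(Y)$. The genuine obstacle is this transfer of minimality for an arbitrary noetherian $R$: over a regular ring it is carried out in \cite{Benson/Iyengar/Krause/Pevtsova:2022b} using that $R_\fp$ is regular local, and the purpose of the preparatory results assembled above — van der Kallen's finiteness theorem, which is what makes $H^*(G,R)$ noetherian so that Theorem~\ref{th:local-to-global} even applies, and Theorem~\ref{th:kappa}, which matches $\Spec H^*(G_{k(\fp)},k(\fp))$ with the fibre of $\Spec H^*(G,R)$ over $\fp$ — is precisely to make that argument run with no regularity hypothesis, in particular with $R_\fp$ possibly singular.
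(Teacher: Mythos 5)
Your easy direction agrees with the paper's. Your hard direction, as you yourself flag, has a genuine gap at the crucial step, and it is worth comparing with what the paper actually does.

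The paper applies the local-to-global principle over the \emph{base ring} $R$ (concentrated in degree~$0$), not over $S = H^*(G,R)$, reducing to showing $\gam_\fp X \in \Loc^\otimes(Y)$ for each $\fp \in \Spec R$; by Lemma~\ref{le:reduction} this further reduces to $R$ local with maximal ideal $\fm$. There, the projection formula shows $\pi(X) \in \Loc^\otimes(\pi(Y))$ implies $\pi_r\pi(X) \in \Loc^\otimes(Y)$, and Lemma~\ref{le:lg-local-case} gives $\Loc^\otimes(\pi_r\pi X) = \Loc^\otimes(\gam_\fm X)$. The latter is the real content, established by a Koszul-complex computation: $\pi_r(\bfi_{G_k}k) \cong \bfi k$, and the residue field $k$ finitely builds $\kos R{\fm}$ in $\dcat R$ because its cohomology has finite length, whence $\kos{(\bfi R)}{\fm} \in \Thick(\pi_r(\bfi_{G_k}k))$. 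Notably, the paper's proof of this theorem uses neither Theorem~\ref{th:kappa} nor stratification of the fibre categories $\Rep(G_{k(\fp)},k(\fp))$; those ingredients enter only later, in Theorem~\ref{th:stratification}.

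You instead localise over $\Spec S$, match a prime $\fq$ with a prime $\bar\fq$ of $H^*(G_{k(\fp)},k(\fp))$ via Theorem~\ref{th:kappa}, and then must show $\gam_\fq\Rep(G,R)$ is a \emph{minimal} localising tensor ideal by ``transporting minimality back along $\pi_\fp$.'' The conservativity of $\pi_\fp$ on $\gam_\fq\Rep(G,R)$ that you extract from Proposition~\ref{pr:conserve} is correct as far as it goes, but it does not give minimality: it says $\pi_\fp Z = 0 \Rightarrow Z = 0$, not that any nonzero $Z$ tensor-generates all of $\gam_\fq\Rep(G,R)$. The latter is exactly what you still owe, and it is precisely stratification of $\Rep(G,R)$ at the prime $\fq$ --- i.e.\ Theorem~\ref{th:stratification}, which the paper derives \emph{from} Theorem~\ref{th:lg-kproj-ha}, not the other way around. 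Over regular $R$, \cite{Benson/Iyengar/Krause/Pevtsova:2022b} does carry out a minimality transfer using regularity of $R_\fp$; without that hypothesis, nothing you have assembled (van der Kallen's noetherianity, Theorem~\ref{th:kappa}) actually supplies it. The paper's insight is to sidestep minimality altogether: working over $\Spec R$ rather than $\Spec S$ and replacing descent-of-minimality by the Koszul argument of Lemma~\ref{le:lg-local-case} gives $\gam_\fm X \in \Loc^\otimes(Y)$ directly from the fibre hypothesis, with no appeal to the internal structure of any single tensor ideal.
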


The proof is given towards the end of the section.

\subsection*{Reduction to the local case}

Fix $\fp\in \Spec R$.  For any $G$-complex $X$ set
\[
\lambda(X)\colonequals X\otimes_R R_\fp
\] 
viewed as a complex of $G_\fp$-modules, where $G_\fp$ is shorthand for the group scheme $G_{R_\fp}$ over $R_\fp$. As $\lambda$ preserves coproducts and compact objects it has a right adjoint, by Brown representability, and we get an adjoint pair of coproduct-preserving functors
\begin{equation*}
\label{eq:localisation}
\begin{tikzcd}
	\Rep(G,R) \arrow[yshift=.5ex]{r}{\lambda} 
    	& \Rep(G_\fp, R_\fp)\,. \arrow[yshift=-.5ex]{l}{\lambda_r}
\end{tikzcd}
\end{equation*}

\begin{lemma}
\label{le:reduction}
The composite  $\lambda_r\lambda$ is naturally isomorphic to  the functor $X\mapsto X_\fp$. Thus $\lambda_r$ identifies $\Rep( G_\fp, R_\fp)$ with the subcategory of $\fp$-local objects in $\Rep(G, R)$.
\end{lemma}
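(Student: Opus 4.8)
The plan is to establish the first assertion by showing that the unit $\mu_X\colon X\to\lambda_r\lambda X$ of the adjunction exhibits $\lambda_r\lambda X$ as the localisation $X_\fp$, and then to deduce the second assertion from this together with a description of $\Ker\lambda$. The soft inputs come first: $\lambda=(-)\otimes_R R_\fp$ is symmetric monoidal, preserves arbitrary coproducts, and sends $\Rep(G,R)^c=\rep(G,R)$ into $\Rep(G_\fp,R_\fp)^c=\rep(G_\fp,R_\fp)$; hence $\lambda_r$ exists by Brown representability, preserves coproducts, and is lax monoidal, and $\mu$ is a natural transformation $\id\Rightarrow\lambda_r\lambda$ of coproduct-preserving endofunctors of $\Rep(G,R)$.

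The technical core is the natural isomorphism
\[
\Hom^*_{\Rep(G_\fp,R_\fp)}(\lambda C,\lambda X)\;\cong\;\Hom^*_{\Rep(G,R)}(C,X)\otimes_R R_\fp
\qquad(C\in\Rep(G,R)^c,\ X\in\Rep(G,R)),
\]
compatible with the canonical base-change map. Since $C$ is rigid by Proposition~\ref{pr:rigidity} and $\lambda$ is monoidal, one has $\Hom^*(C,X)\cong\Hom^*(\one,C^\vee\otimes X)$ and $\Hom^*(\lambda C,\lambda X)\cong\Hom^*(\one,\lambda(C^\vee\otimes X))$, so it suffices to prove $\Hom^*_{\Rep(G_\fp,R_\fp)}(\one,\lambda Z)\cong\Hom^*_{\Rep(G,R)}(\one,Z)\otimes_R R_\fp$ for all $Z$. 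Both sides are cohomological functors of $Z$ that commute with coproducts — here one uses that $\one$ is compact in each category, being the lattice $R$ respectively $R_\fp$, and that $-\otimes_R R_\fp$ is exact and preserves coproducts — so the comparison map is an isomorphism everywhere once it is so on the compact generators $Z=M\in\rmod(G,R)$. There it becomes the base-change map $H^*(G,M)\otimes_R R_\fp\to H^*(G_\fp,M_\fp)$, which is bijective because $R\to R_\fp$ is flat, together with the isomorphism $H^*(G,M_\fp)\cong H^*(G_\fp,M_\fp)$ of Section~\ref{se:groups}.

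Granting this, for every compact $C$ one obtains $\Hom^*(C,\lambda_r\lambda X)\cong\Hom^*(\lambda C,\lambda X)\cong\Hom^*(C,X)\otimes_R R_\fp$, and under these identifications $\mu_X$ induces the localisation map $\Hom^*(C,X)\to\Hom^*(C,X)_\fp$. Hence $\lambda_r\lambda X$ is $\fp$-local, and the cone of $\mu_X$ has $\Hom^*(C,-)$ vanishing after localising at $\fp$, so it vanishes under the functor $(-)_\fp$. By the universal property of the localisation $X\to X_\fp$ this forces $\lambda_r\lambda X\cong X_\fp$, naturally in $X$.

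For the second assertion, $\lambda_r W$ is always $\fp$-local, because $\Hom^*(C,\lambda_r W)\cong\Hom^*(\lambda C,W)$ is an $R_\fp$-module; conversely any $\fp$-local $V$ satisfies $V\cong V_\fp\cong\lambda_r\lambda V$, so $\lambda_r$ is essentially surjective onto the subcategory of $\fp$-local objects. Upgrading this to an equivalence requires $\lambda_r$ to be fully faithful, equivalently that the counit $\lambda\lambda_r\to\id$ be invertible, and I expect this to be the main obstacle. It amounts to $\lambda$ inducing an equivalence $\Rep(G,R)/\Ker\lambda\simeq\Rep(G_\fp,R_\fp)$, which by Neeman's theorem on compactly generated localisations reduces to the essential surjectivity, up to direct summands, of the base-change functor $\rep(G,R)\to\rep(G_\fp,R_\fp)$; this is a Thomason--Trobaugh-type statement, since $\rep(G_\fp,R_\fp)$ is the category of perfect complexes on the quotient stack $\Spec R_\fp\sslash G$, which is the filtered limit of the opens $\Spec R_s\sslash G$ of $\Spec R\sslash G$, and perfect complexes on a quasi-compact open are summands of restrictions of global ones. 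I would invoke this, or prove it by lifting $R_\fp G_\fp$-lattices to finitely generated $RG$-modules and correcting by a summand. Combined with the fact that $\Ker\lambda=\Ker((-)_\fp)$ — the inclusion $\subseteq$ is immediate from the displayed isomorphism, and $\supseteq$ holds because $\lambda$ inverts multiplication by every element of $R\setminus\fp$ and $X\to X_\fp$ is a homotopy colimit of such multiplications, so $\lambda\cong\lambda\circ(-)_\fp$ — this shows $\lambda$ and $(-)_\fp$ are the Bousfield localisations at the same subcategory; hence $\lambda_r$ is fully faithful with essential image $\Ker((-)_\fp)^\perp$, which is precisely the subcategory of $\fp$-local objects.
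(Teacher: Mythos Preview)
For the first assertion your argument follows the paper's: both establish $\Hom^*(C,\lambda_r\lambda X)\cong\Hom^*(C,X)_\fp$ for compact $C$ and then extend using that $\lambda$ and $\lambda_r$ preserve coproducts. You take an extra detour through rigidity to reduce to $C=\one$, which is correct but unnecessary; the paper simply checks the isomorphism when $C$ and $X$ are both compact (where it is flat base change on $\bfD^b(\rmod(G,R))$) and then lets $X$ vary.

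For the second assertion you work considerably harder than the paper, which says only that it is ``an immediate consequence''. You are right to be cautious: knowing $\lambda_r\lambda\cong(-)_\fp$ shows that the unit of the restricted adjunction $\lambda|_{\fp\text{-local}}\dashv\lambda_r$ is invertible, hence that $\lambda|_{\fp\text{-local}}$ is fully faithful, but it does not by itself force the counit $\lambda\lambda_r\to\id$ to be invertible. That requires the essential image of $\lambda$ to generate $\Rep(G_\fp,R_\fp)$ as a localising subcategory, equivalently that $\rep(G,R)\to\rep(G_\fp,R_\fp)$ be essentially surjective up to thick closure and summands. Your proposed resolution via Thomason--Trobaugh for the quotient stack $[\Spec R\sslash G]$, or via lifting $R_\fp G_\fp$-lattices through an intermediate $R_s$ where the free locus opens up, is a valid route to close this. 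The paper does not spell out any such argument; whether the authors regard the generation as evident or are eliding a genuine step, your more careful treatment is warranted. Your final paragraph identifying $\Ker\lambda=\Ker((-)_\fp)$ is correct and not circular---the argument that $\lambda\cong\lambda\circ(-)_\fp$ uses only that $\lambda$ preserves filtered homotopy colimits and inverts each $s\notin\fp$---but note it becomes redundant once full faithfulness of $\lambda_r$ is known, since the essential image of $\lambda_r$ is then automatically that of $\lambda_r\lambda=(-)_\fp$.
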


\begin{proof}
  The functor $\lambda$ identifies with the localisation functor
  \[
  -\otimes_R R_\fp\colon \rep(G,R) \lra \rep(G_\fp,R_\fp)
  \]
  when restricted to compact objects; see Proposition~\ref{pr:KProj}. Thus for compact objects $X,Y$ in
  $\Rep(G, R)$ the unit $Y\to \lambda_r\lambda(Y)$ induces an
  isomorphism
  \[
  \Hom_{\bfK(G)}(X,Y)_\fp \cong\Hom_{\bfK(G_\fp)}(\lambda X,\lambda Y)\cong
    \Hom_{\bfK(G)}( X,\lambda_r\lambda (Y))\,.
   \] 
   This yields the first assertion since $\lambda$ and $\lambda_r$ preserve coproducts. The
  second assertion is an immediate consequence.
\end{proof}

\subsection*{The local case}

For the proof of Theorem~\ref{th:lg-kproj-ha} the key is to analyse the situation where $(R,\fm,k)$ is a noetherian commutative local
ring, with maximal ideal $\fm$ and residue field $k$. One has an adjoint pair $(\pi,\pi_r)$ of functors
\begin{equation}
\label{eq:adjunction}
\begin{tikzcd}
    \Rep(G, R) \arrow[yshift=.5ex]{r}{\pi} 
    	& \Rep(G_k,k) \arrow[yshift=-.5ex]{l}{\pi_r}
\end{tikzcd}
\end{equation}
where $\pi(X)\colonequals X\otimes_Rk$. The right adjoint exists because $\pi$ preserves coproducts; since $\pi$ preserves compact objects, it follows that $\pi_r$ also preserves coproducts.

Observe that $\pi$ is a tensor functor. Because of rigidity this yields the following \emph{projection formula}. For $X$ in $\Rep(G, R)$ and $Y$ in $\Rep(G_k,k)$, one has
\[
X\otimes_R \pi_r (Y) \cong \pi_r(\pi(X)\otimes_k Y)\,.
\]
One can verify this directly, but this is a general fact about tensor
functors between rigidly-compactly generated tensor triangulated categories; see
\cite[Theorem~1.3]{Balmer/DellAmbrogio/Sanders:2016a}.

\begin{lemma}
\label{le:lg-local-case}
For any $X$ in $\Rep(G, R)$ there is an equality
\[
\Loc^{\otimes}(\pi_r\pi(X)) = \Loc^{\otimes}(\gam_{\fm}X)\,.
\]
\end{lemma}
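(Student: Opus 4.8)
The plan is to reduce to the case $X=\one$ and then establish the two inclusions separately; the second one is where fibrewise detection enters. First I would reduce to $X=\one$. The projection formula recalled just above, applied with $Y$ the unit of $\Rep(G_k,k)$, gives $\pi_r\pi(X)\cong X\otimes_R\pi_r\pi(\one)$, while the standard local cohomology formula of Section~\ref{se:lch} gives $\gam_\fm X\cong X\otimes_R\gam_\fm\one$. One then invokes the elementary observation that $\Loc^\otimes(Z_1)=\Loc^\otimes(Z_2)$ implies $\Loc^\otimes(X\otimes_R Z_1)=\Loc^\otimes(X\otimes_R Z_2)$ for every $X$: the class $\{W\mid X\otimes_R W\in\Loc^\otimes(X\otimes_R Z_2)\}$ is a localising tensor ideal containing $Z_2$, hence all of $\Loc^\otimes(Z_2)$. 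Thus it suffices to prove $\Loc^\otimes(\pi_r\pi\one)=\Loc^\otimes(\gam_\fm\one)$.

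For the inclusion $\Loc^\otimes(\pi_r\pi\one)\subseteq\Loc^\otimes(\gam_\fm\one)$ I would argue that $\pi_r$ always lands in the $\fm$-torsion subcategory: for any $Y$ and any compact $C$ one has $\Hom^*(C,\pi_r Y)\cong\Hom^*(\pi C,Y)$, on which the local ring $R$ acts through the quotient $R\to k$, so $\pi_r Y$ is $\fm$-torsion. Since the $\fm$-torsion objects form a localising tensor ideal equal to $\Loc^\otimes(\gam_\fm\one)$ — every $\fm$-torsion $Z$ satisfies $Z\cong Z\otimes_R\gam_\fm\one$ — this gives the inclusion.

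The reverse inclusion is the crux. Write $\gam_\fm\one$ as the stable Koszul complex on a finite generating set $t_1,\dots,t_n$ of $\fm$. Base change along $R\to k(\fp)$ (a tensor functor) kills it when $\fp\ne\fm$, since some $t_i$ becomes invertible and the corresponding \v{C}ech factor becomes contractible, and it yields the unit $\one_{k(\fm)}$ when $\fp=\fm$, since each $\bar t_i=0$ and each \v{C}ech factor reduces to $\one_k$. The first computation together with Proposition~\ref{pr:conserve} shows that the restriction of $\pi$ to $\gam_\fm\Rep(G,R)$ is conservative: an $\fm$-torsion $Z$ has $Z\otimes_R k(\fp)\cong Z\otimes_R(\gam_\fm\one\otimes_R k(\fp))=0$ for all $\fp\ne\fm$, so $Z=0$ as soon as $\pi Z=Z\otimes_R k=0$. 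The second computation gives $\pi(\gam_\fm\one)\cong\pi(\one)=\one_k$, hence $\pi_r\pi(\gam_\fm\one)\cong\pi_r\pi\one$, and the canonical map $\gam_\fm\one\to\one$ becomes an isomorphism after applying $\pi$. To finish, I would run a descent argument showing $\gam_\fm\one\in\Loc^\otimes(\pi_r\pi\one)$: the unit map $\gam_\fm\one\to\pi_r\pi(\gam_\fm\one)\cong\pi_r\pi\one$ together with its iterated cofibres stays inside $\gam_\fm\Rep(G,R)$, the image of each such object under $\pi_r\pi$ lies in $\Loc^\otimes(\pi_r\pi\one)$ by the projection formula, and conservativity of $\pi$ on this subcategory forces the resulting cobar/Amitsur filtration of $\gam_\fm\one$ to be exhaustive. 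Equivalently, the compact Koszul object $\kos\one{t_1}\otimes_R\cdots\otimes_R\kos\one{t_n}$, which generates $\gam_\fm\Rep(G,R)$ as a localising tensor ideal, is carried by $\pi$ to a finite sum of shifts of $\one_k$, so the descent machinery of \cite{Balmer/DellAmbrogio/Sanders:2016a} applied to the conservative tensor functor $\pi\colon\gam_\fm\Rep(G,R)\to\Rep(G_k,k)$ delivers the claim.

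The main obstacle I anticipate is exactly this last step: upgrading conservativity of $\pi$ on the $\fm$-torsion subcategory to the assertion that $\pi_r\pi\one$ generates $\gam_\fm\Rep(G,R)$ as a localising tensor ideal. This is a genuine descent statement, and the delicate point is to guarantee that the filtration along $\one\to\pi_r\pi\one$ converges using only the operations available in a localising subcategory — coproducts, triangles, and retracts — rather than homotopy limits. Making this precise, by verifying the relevant descent criterion for rigidly-compactly generated tensor triangulated categories, is the substantive part of the argument; the remaining ingredients (the Koszul base-change computations and the reduction to $X=\one$) are routine.
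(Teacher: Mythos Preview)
Your reduction to $X=\one$ via the projection formula and your treatment of the inclusion $\Loc^\otimes(\pi_r\pi\one)\subseteq\Loc^\otimes(\gam_\fm\one)$ are correct and match the paper's argument.

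The reverse inclusion, however, has a genuine gap at exactly the point you flag. Conservativity of $\pi$ on the $\fm$-torsion subcategory tells you only that the \emph{left orthogonal} of the essential image of $\pi_r$ inside $\gam_\fm\Rep(G,R)$ is zero; equivalently, that image generates $\gam_\fm\Rep(G,R)$ as a \emph{colocalising} subcategory. It does not follow that $\pi_r\one_k$ generates $\gam_\fm\Rep(G,R)$ as a localising tensor ideal. For the Amitsur/cobar tower along $\one\to\pi_r\pi\one$ to converge in the sense you need (using only coproducts, triangles, retracts), one needs \emph{descendability}: that $\gam_\fm\one$ lies in the \emph{thick} tensor ideal generated by $\pi_r\one_k$. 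This is strictly stronger than conservativity, and \cite{Balmer/DellAmbrogio/Sanders:2016a} does not supply it---that paper concerns Grothendieck--Neeman duality and the Wirthm\"uller isomorphism, not descent.

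The paper bypasses descent entirely by identifying $\pi_r\one_k$ explicitly. Comparing the square of adjunctions
\[
\begin{tikzcd}[column sep=large]
    \Rep(G, R) \arrow[xshift=-.5ex]{d}[swap]{\pi} \arrow[yshift=.5ex]{r}{\bfq}
    	& \dcat G \arrow[xshift=-.5ex]{d}[swap]{-\lotimes_Rk}  \arrow[yshift=-.5ex]{l}{\bfi} \\
	\Rep(G_k,k) \arrow[xshift=.5ex]{u}[swap]{\pi_r} \arrow[yshift=.5ex]{r}{\bfq_{G_k}}
    	& \dcat{G_k} \arrow[xshift=.5ex]{u}[swap]{\mathrm{res}} \arrow[yshift=-.5ex]{l}{\bfi_{G_k}}
\end{tikzcd}
\]
one checks that the left adjoints around the square commute, hence so do the right adjoints, giving $\pi_r(\bfi_{G_k}k)\cong\bfi k$. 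Now the Koszul complex $\kos R\fm$ has finite-length cohomology, so $k$ thick-builds $\kos R\fm$ in $\dcat R$; since $G$ acts trivially on both, the same holds in $\dcat G$. Applying $\bfi$ yields $\kos{(\bfi R)}\fm\in\Thick(\bfi k)=\Thick(\pi_r\one_k)$, and then $\gam_\fm\one\in\Loc(\kos{(\bfi R)}\fm)\subseteq\Loc^\otimes(\pi_r\one_k)$. Note that this argument actually \emph{proves} descendability of $\pi_r\one_k$ over the $\fm$-torsion subcategory, which is what your cobar approach would have needed as input; the paper's route just gets there by a direct computation rather than an abstract convergence argument.
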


\begin{proof}
Let $\bfi_{G_k} k$ denote the injective resolution of the trivial $G_k$-module $k$, which is the unit of the tensor triangulated
category $\KProj{G_k}$. We use that $\pi$ is a tensor functor, and in particular that
\[
\pi(\bfi R) = (\bfi R)\otimes_R k \cong \bfi_{G_k}k\,.
\]
Because $\Rep(G, R)$ is rigid with unit $\bfi R$, one has 
\[
\gam_{\fm}X \cong X\otimes_R \gam_{\fm}(\bfi R)\,.
\]
The projection formula for the pair $(\pi,\pi_r)$ and the fact that $\bfi_{G_k}k$ is the unit of $\Rep(G_k,k)$ yields isomorphisms
\[
 \pi_r\pi(X)  \cong \pi_r(\pi(X)\otimes_k \bfi_{G_k}k) \cong  X\otimes_R \pi_r(\bfi_{G_k}k)\,.
\]
Thus it suffices to verify that
\[
\Loc^{\otimes}(\pi_r(\bfi_{G_k}k)) = \Loc^{\otimes}(\gam_{\fm}(\bfi R))\,.
\]

One can verify easily that $\pi_r(\bfi_{G_k}k)$ is $\fm$-torsion, so one has
\[
\pi_r(\bfi_{G_k}k) \cong \gam_{\fm} \pi_r(\bfi_{G_k}k) \cong \pi_r(\bfi_{G_k}k)\otimes_R \gam_{\fm}(\bfi R)\,. 
\]
Thus $\pi_r(\bfi_{G_k}k)$ is in the tensor ideal localising subcategory generated by $\gam_{\fm}(\bfi R)$. 

To complete the proof, it suffices to check that  $\gam_{\fm}(\bfi R)$ is in the localising (and in particular the tensor ideal localising)
subcategory generated by $\pi_r(\bfi_{G_k}k)$. Indeed
\[
\gam_{\fm}(\bfi R)\in\Loc(\kos{(\bfi R)}{\fm})
\] 
by \cite[Proposition~2.11]{Benson/Iyengar/Krause:2011a}), so it suffices to verify that
\[
\kos{(\bfi R)}{\fm}\in\Thick(\pi_r(\bfi_{G_k}k))\,.
\] 
The key is that this can be checked in the derived category of $G$. To that end consider the diagram of pairs of adjoint functors
\[
\begin{tikzcd}[column sep=large]
    \Rep(G, R) \arrow[xshift=-.5ex]{d}[swap]{\pi} \arrow[yshift=.5ex]{r}{\bfq} 
    	& \dcat G \arrow[xshift=-.5ex]{d}[swap]{-\lotimes_Rk}  \arrow[yshift=-.5ex]{l}{\bfi} \\
	\Rep(G_k,k) \arrow[xshift=.5ex]{u}[swap]{\pi_r} \arrow[yshift=.5ex]{r}{\bfq_{G_k}} 
    	& \dcat{G_k} \arrow[xshift=.5ex]{u}[swap]{\mathrm{restriction}} \arrow[yshift=-.5ex]{l}{\bfi_{G_k}}\,. 
\end{tikzcd}
\]
It is straightforward to see that the composition of left adjoints from the top left to the bottom right commute:
\[
\bfq_{G_k}\circ \pi \cong \bfq(-)\lotimes_R k\,.
\]
It follows that the composition of right adjoints going the other way commute as well, which allows us to deduce that
\[
\pi_r(\bfi_{G_k}k) \cong \bfi k\,.
\]
Moreover, as $\bfi$ is an $R$-linear functor, one has
\[
\kos{(\bfi R)}{\fm} \cong \bfi (\kos R{\fm})\,.
\]
In $\dcat R$ the object $k$ finitely builds $\kos R{\fm}$ since the
cohomology of $\kos R{\fm}$ has finite length. The action of $G$ on $R$ and $k$ factors through the
augmentation $RG\to R$. Thus the object $k$ finitely builds $\kos R{\fm}$ in $\dcat G$. We conclude that
\[
\kos{(\bfi R)}{\fm} \cong \bfi (\kos R{\fm})\in\Thick(\bfi  k)=\Thick(\pi_r(\bfi_{G_k}k))\,.
\]
This completes the proof that $\pi_r\pi(X)$ and $\gam_{\fm}X$ generate the same tensor ideal localising subcategories.
\end{proof}

\begin{proof}[Proof of Theorem~\ref{th:lg-kproj-ha}]
(1) $\Rightarrow$ (2) This implication is straightforward to verify once one observes that the functor $X\mapsto X\otimes_R k(\fp)$  respects coproducts, and the tensor product: For  $X,Y$ in $\Rep(G, R)$ there is a natural isomorphism
\[
(X\otimes_RY)_{k(\fp)} \cong X_{k(\fp)} \otimes_{k(\fp)} Y_{k(\fp)} \quad\text{in}\quad \Rep(G_{k(\fp)},k(\fp))\,.
\]

(2) $\Rightarrow$ (1)  Assume first that $R$ is local and consider the adjoint pair \eqref{eq:adjunction}. Suppose that
$\pi(X)\in\Loc^\otimes(\pi(Y))$. It follows that $\pi_r\pi(X)$ is in $\Loc^\otimes(Y)$. Indeed, one has $\Loc^\otimes(\pi(Y)) = \Loc(\rep(G_{k(\fp)},k(\fp)) \otimes \pi(Y))$, so the projection formula implies 
\begin{align*}
\pi_r\pi(X) & \in \Loc(\pi_r(\rep(G_{k(\fp)},k(\fp)) \otimes \pi(Y))) \\
& = \Loc(\pi_r(\rep(G_{k(\fp)},k(\fp))) \otimes Y) \\
& \subseteq \Loc^{\otimes}(Y)\,.
\end{align*}
Then Lemma~\ref{le:lg-local-case} implies  $\gam_\fm X\in\Loc^\otimes(Y)$. For general $R$ and $\fp$ in $\Spec R$ we have
$X_{k(p)}=(X_\fp)_{k(p)}$ and therefore $X_{k(\fp)}\in\Loc^\otimes(Y_{k(\fp)})$ implies
\[
\gam_\fp
  X=\gam_\fp(X_\fp)\in\Loc^\otimes(Y_\fp)\subseteq\Loc^\otimes(Y)\,.
\]
Here we use Lemma~\ref{le:reduction} to reduce to the local case, where $\fp$ identifies with the maximal ideal of $R_\fp$. 
Using Theorem~\ref{th:local-to-global} we conclude that $X$ is in $\Loc^\otimes(Y)$.
\end{proof}

\section{Stratification}
\label{se:stratification}
Let $S$ be a graded-commutative noetherian ring and $\cat T$ a rigidly-compactly generated tensor triangulated category. Assume that $S$ acts on $\cat T$ via a map of graded rings $S\to \Hom^*_{\cat T}(\one,\one)$.  We say that the tensor triangulated category $\cat T$ is
\emph{stratified} by the action of $S$ if for any pair of objects $X,Y$ in $\cat T$
there are equivalences
\begin{align*}
\supp_SX\subseteq \supp_SY 
	&\iff \Loc^{\otimes}(X) \subseteq \Loc^{\otimes}(Y) \\
 	&\iff X\in \Loc^{\otimes}(Y)\,.
\end{align*}
Lemma~4.12 in \cite{Benson/Iyengar/Krause/Pevtsova:2022b} reconciles this notion with the one in \cite[Section 7.2]{Benson/Iyengar/Krause:2011a}.

Let $R$ be a noetherian commutative ring and $G$ a finite flat group scheme over $R$. We consider the cohomology algebra $S\colonequals \Ext^*_G(R,R)$ of $G$ and the cohomology algebras of the fibres of the $R$-algebra $RG$. For each $\fp$
in $\Spec R$ we set
\[
S(\fp)\colonequals \Ext_{\fibre G\fp}^*(k(\fp),k(\fp))\,.
\]
The functor $-\lotimes_R k(\fp)$ induces a map of graded $R$-algebras $S\to S(\fp)$ and this induces a natural map of graded $k(\fp)$-algebras
\[ 
\kappa_\fp \colon S \otimes_R k(\fp) \lra S(\fp)\,.
\]
The result below contains the first part of Theorem~\ref{ithm:main}. Recall that $\Rep(G,R)$ is an abbreviation for $\lfK(\Prj G,R)$.

\begin{theorem}
\label{th:stratification}
Let $R$ be a noetherian commutative ring, $G$ a finite flat group scheme over $R$. The tensor triangulated category $\Rep(G, R)$ is stratified by the action of $S\colonequals\Ext^*_G(R,R)$,  with support equal to $\Spec S$. In particular one gets a computation of the Balmer spectrum of the derived category of lattices:
\[
\Spc(\rep(G,R)) \longiso \Spec H^*(G,R)\,.
\]
\end{theorem}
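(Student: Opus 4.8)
The approach is a fibrewise reduction, in the style of \cite{Benson/Iyengar/Krause/Pevtsova:2022b}. Over a field $k(\fp)$ one has $\Rep(G_{k(\fp)},k(\fp)) = \bfK(\Prj G_{k(\fp)})$ by Proposition~\ref{pr:KProj-regular}, and this category is already known to be stratified by $S(\fp)\colonequals H^*(G_{k(\fp)},k(\fp))$ with support all of $\Spec S(\fp)$: for a finite group this is \cite{Benson/Iyengar/Krause:2011a}, and for an arbitrary finite flat group scheme over a field it is \cite{Benson/Iyengar/Krause/Pevtsova:2018a}. With the preparatory results of Sections~\ref{se:fibrewisedetection}--\ref{se:building} in hand this method now applies essentially verbatim: $S = H^*(G,R)$ is noetherian by van der Kallen's theorem~\ref{ch:vanderkallen}, so the local cohomology formalism and the local-to-global principle (Theorem~\ref{th:local-to-global}) of Section~\ref{se:lch} are available for $\Rep(G,R)$.

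For the defining property of stratification, the implications $X \in \Loc^\otimes(Y) \Rightarrow \Loc^\otimes(X)\subseteq\Loc^\otimes(Y)\Rightarrow \supp_S X\subseteq\supp_S Y$ are formal, since $\gam_\fq \cong -\otimes\gam_\fq\one$ preserves coproducts, triangles, and membership in a localising tensor ideal. The substantive statement is therefore: if $\supp_S X\subseteq\supp_S Y$ then $X\in\Loc^\otimes(Y)$. By the building criterion of Theorem~\ref{th:lg-kproj-ha} it suffices to show $X_{k(\fp)}\in\Loc^\otimes(Y_{k(\fp)})$ in $\Rep(G_{k(\fp)},k(\fp))$ for every $\fp\in\Spec R$, and since that category is stratified by $S(\fp)$, it is enough to verify the fibrewise containment $\supp_{S(\fp)}X_{k(\fp)}\subseteq\supp_{S(\fp)}Y_{k(\fp)}$.

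The crux --- and the step I expect to be the main obstacle --- is to identify $\supp_{S(\fp)}X_{k(\fp)}$ with the part of $\supp_S X$ lying over $\fp\in\Spec R$. The base-change functor $\pi_\fp\colon\Rep(G,R)\to\Rep(G_{k(\fp)},k(\fp))$ preserves coproducts and compact objects (Proposition~\ref{pr:KProj}) and is $S$-linear, the action on the target factoring through $S\to S\otimes_R k(\fp)\xrightarrow{\ \kappa_\fp\ }S(\fp)$; by Theorem~\ref{th:kappa} --- equivalently Theorem~\ref{th:uhom-main}(2), as $R\to k(\fp)$ is essentially of finite type --- the map $\kappa_\fp$ is a universal homeomorphism on spectra. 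Hence, combining \cite[Proposition~7.2]{Benson/Iyengar/Krause:2012b} (used already in the proof of Lemma~\ref{le:supp-adjoint}) with the invariance of local cohomology under such a change of graded rings --- exactly as in \cite{Benson/Iyengar/Krause/Pevtsova:2022b} --- one gets $\pi_\fp(\gam_\fq X)\cong\gam_{\fq'}(\pi_\fp X)$ whenever $\fq\in\Spec S$ lies over $\fp$ and $\fq'\in\Spec S(\fp)$ is its image, and $\gam_{\fq'}(\pi_\fp X)=0$ otherwise. On the other hand $\gam_\fq X$ is $\fq$-local and $\fq$-torsion over $S$, hence --- via the structure map $R\to S$ --- both $(\fq\cap R)$-local and $(\fq\cap R)$-torsion as an object over $R$, so all of its fibres at primes $\ne\fq\cap R$ vanish; fibrewise detection (Proposition~\ref{pr:conserve}) then gives $\gam_\fq X\ne 0\iff(\gam_\fq X)_{k(\fp)}\ne 0\iff\gam_{\fq'}(X_{k(\fp)})\ne 0$ for $\fp=\fq\cap R$. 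Thus $\Spec\kappa_\fp$ carries $\supp_{S(\fp)}X_{k(\fp)}$ onto the fibre of $\supp_S X$ over $\fp$, and likewise for $Y$; the hypothesis $\supp_S X\subseteq\supp_S Y$ then forces the fibrewise containment, completing the verification of stratification. Applying the same computation to the unit, using $\pi_\fp(\one)\cong\one_{\Rep(G_{k(\fp)},k(\fp))}$ and $\supp_{S(\fp)}\one=\Spec S(\fp)$ over the field, shows $\supp_S\one=\Spec S$, that is, $\supp_S\Rep(G,R)=\Spec S$.

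Finally, the Balmer spectrum computation follows formally. Stratification of the rigidly-compactly generated tensor triangulated category $\Rep(G,R)$ by $S$ with full support yields, via \cite{Benson/Iyengar/Krause:2011a,Benson/Iyengar/Krause:2011b} (see Section~\ref{se:costratification}), a bijection between localising tensor ideals of $\Rep(G,R)$ and subsets of $\Spec S$; restricting to the compactly generated ones matches the thick tensor ideals of $\Rep(G,R)^c=\rep(G,R)$ with the specialisation-closed --- equivalently, since $\Spec S$ is noetherian, Thomason --- subsets of $\Spec S$. Comparing this with Balmer's classification \cite{Balmer:2005a} of radical thick tensor ideals of $\rep(G,R)$, which are all of them by rigidity, and using that $\End^*_{\rep(G,R)}(R)\cong H^*(G,R)=S$ so that Balmer's comparison map $\Spc(\rep(G,R))\to\Spec H^*(G,R)$ is compatible with these identifications, one concludes that it is a homeomorphism, which is the asserted isomorphism \eqref{eq:balmer}. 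The main difficulty overall is the fibrewise bookkeeping of the previous paragraph; everything else is formal given the results already established.
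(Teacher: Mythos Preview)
Your proposal is correct and follows the same overall architecture as the paper: reduce to fibres via Theorem~\ref{th:lg-kproj-ha}, invoke stratification over fields from \cite{Benson/Iyengar/Krause/Pevtsova:2018a}, and bridge the two notions of support using the homeomorphism of Theorem~\ref{th:kappa}; the Balmer spectrum computation is then the standard consequence.

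The one place where you diverge from the paper is in establishing the support--fibre identification, i.e., that $\kappa_\fp^a$ carries $\supp_{S(\fp)}X_{k(\fp)}$ onto $\supp_S X \cap (\eta^a)^{-1}(\fp)$. The paper packages this as Lemma~\ref{le:support-fibre} and proves it by passing to the local case, invoking Lemma~\ref{le:supp-adjoint} together with the key equality $\Loc^\otimes(\pi_r\pi X)=\Loc^\otimes(\gam_\fm X)$ of Lemma~\ref{le:lg-local-case}. You instead argue pointwise: commute $\pi_\fp$ with $\gam_\fq$ via \cite[Proposition~7.2]{Benson/Iyengar/Krause:2012b}, use that $\gam_\fq X$ is $(\fq\cap R)$-local and $(\fq\cap R)$-torsion over $R$ so that only the fibre at $\fp_0=\fq\cap R$ can survive, and then appeal to fibrewise detection (Proposition~\ref{pr:conserve}). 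This is a legitimate alternative and arguably more direct, since it avoids a second use of Lemma~\ref{le:lg-local-case}; the trade-off is that the step ``all fibres at primes $\ne\fq\cap R$ vanish'' deserves one more sentence---it follows because $\pi_\fp$ is $R$-linear and commutes with $\gam_{\fp_0}^R$, while on the target every object is already $\fp$-local and $\fp$-torsion over $R$, forcing $\gam_{\fp_0}^R=0$ there when $\fp_0\ne\fp$.
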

As noted before, Lau~\cite{Lau:2023a} computes the Balmer spectrum of $\rep(G,R)$ when $G$ is a finite group, and where the $G$-action on $R$ is even allowed to be nontrivial.

The proof of the theorem above requires some preparation.

\subsection*{Support and fibres}
As we will explain in the proof of Theorem~\ref{th:stratification} below, Theorem~\ref{th:lg-kproj-ha} combined with the main theorem of \cite{Benson/Iyengar/Krause/Pevtsova:2018a} yields a stratification of $\Rep(G, R)$ in terms of subsets of the space
\[
\bigsqcup_{\fp\in\Spec R} \Spec S(\fp)\,,
\]
where to each $X$ in $\Rep(G, R)$ we associate the subset 
\[
\bigsqcup_{\fp\in\Spec R} \supp_{S(\fp)} \fibre X{\fp}\,.
\]
The task is to relate this to $\supp_S X$, viewed as a subset of $\Spec S$.

To that end consider the structure map $\eta\colon R\to S$, which induces a map
\[
\eta^{a}\colon \Spec S \lra \Spec R\,.
\]
The fibre of this map over $\fp$ is 
\[
(\eta^{a})^{-1}(\fp) = \Spec(S\otimes_R k(\fp))\,,
\]
which we identify with a subset of $\Spec S$ in the usual way. In the
following we use the map $\kappa_{\fp}\colon S\otimes_R k(\fp)\to S(\fp)$ which
induces the map
\begin{equation*}
\label{eq:fibre-map}
\kappa^a_{\fp}\colon \Spec S(\fp) \lra \Spec (S\otimes_R k(\fp))\subseteq \Spec S\,.
\end{equation*}

Theorem~\ref{th:kappa} yields that this map is a bijection; this fact is critical to all that follows.
The result below tracks the behavior of supports as we pass to the fibres. 

\begin{lemma}
\label{le:support-fibre}
For $\fp$ in $\Spec R$ and $X$ in $\Rep(G, R)$  there is an equality
\[
\kappa^a_{\fp}(\supp_{S(\fp)}\fibre X{\fp}) = \supp_SX \cap (\eta^{a})^{-1}(\fp)\,.
\]
\end{lemma}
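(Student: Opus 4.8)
The plan is to reduce the statement to the known compatibility of local cohomology functors with the fibre functor $\pi_\fp$ (or, after Lemma~\ref{le:reduction}, with the pair $(\pi,\pi_r)$ over the local ring $R_\fp$), and then to translate between $\Spec S(\fp)$ and $\Spec(S\otimes_R k(\fp)) \subseteq \Spec S$ using Theorem~\ref{th:kappa}. First I would fix $\fp \in \Spec R$ and, invoking Lemma~\ref{le:reduction}, replace $R$ by $R_\fp$ so that $\fp$ becomes the maximal ideal $\fm$ of a local ring; since $X_{k(\fp)} = (X_\fp)_{k(\fp)}$ and $\supp_S X \cap (\eta^a)^{-1}(\fp)$ is detected after localising $S$ at the primes lying over $\fp$, nothing is lost. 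Now the fibre functor is the tensor functor $\pi = \pi_\fm \colon \Rep(G,R) \to \Rep(G_k,k)$ of \eqref{eq:adjunction}, with right adjoint $\pi_r$, and $\pi$ is $S$-linear via $S \to S \otimes_R k \xra{\kappa_\fm} S(\fm)$; equivalently $S$ acts on the target through $\kappa_\fm$.

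The key computational step is: for a homogeneous prime $\fq \in \Spec S(\fm)$ with image $\mathfrak{r} \colonequals \kappa^a_\fm(\fq) \in \Spec(S\otimes_R k)\subseteq \Spec S$, one has $\gam_\fq \pi(X) \ne 0$ if and only if $\gam_{\mathfrak r} \pi(X)\ne 0$; this is because $\kappa_\fm$ is a finite map inducing a homeomorphism on spectra (Theorem~\ref{th:kappa}), so restriction of scalars along $\kappa_\fm$ carries the local cohomology functor $\gam^{S(\fm)}_\fq$ to $\gam^{S}_{\mathfrak r}$ applied to the restricted object (this is the standard invariance of torsion/local objects under a homogeneous ring map that is a homeomorphism on spectra — concretely $M_\fq = 0 \iff M_{\mathfrak r}=0$ for an $S(\fm)$-module $M$ viewed over $S$, and the torsion condition transports similarly). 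Hence $\supp_{S(\fm)}\pi(X)$ and $\supp_S \pi(X) \cap \Spec(S\otimes_R k)$ correspond under the bijection $\kappa^a_\fm$. It then remains to identify $\supp_S \pi(X) \cap \Spec(S\otimes_R k)$ with $\supp_S X \cap (\eta^a)^{-1}(\fp)$, where the right-hand side uses the $S$-action on $\Rep(G,R)$. For a prime $\mathfrak r$ lying over $\fm$, one has $\gam_{\mathfrak r} X \ne 0 \iff \pi(\gam_{\mathfrak r} X) \ne 0$ by fibrewise detection (Proposition~\ref{pr:conserve}, as $\fm$ is the unique prime over itself after localisation, so $\pi_\fm$ alone is conservative on $\fm$-local objects; alternatively use that $\gam_{\mathfrak r}X$ is already $\fm$-torsion over $R$), and $\pi$ commutes with $\gam_{\mathfrak r}$ because $\pi$ preserves coproducts and compacts and is $S$-linear, so $\pi(\gam_{\mathfrak r}X) \cong \gam_{\mathfrak r}\pi(X)$ by \cite[Proposition~7.2]{Benson/Iyengar/Krause:2012b}. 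Chaining these equivalences gives $\mathfrak r \in \supp_S X \iff \mathfrak r \in \supp_S \pi(X)$ for $\mathfrak r$ over $\fm$, and combined with the previous paragraph yields the claimed equality.

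The main obstacle I anticipate is the careful bookkeeping of the two $S$-actions and making precise that $\gam_\fq$ computed over $S(\fm)$ agrees with $\gam_{\kappa^a_\fm(\fq)}$ computed over $S$ after restriction of scalars; this needs that $\kappa_\fm$ is module-finite (so that a finitely generated $S(\fm)$-module is finitely generated over $S\otimes_R k$, hence over $S$, keeping the relevant $\Hom^*_{\cat T}(C,-)$ inside the noetherian framework of Section~\ref{se:lch}) together with the homeomorphism of Theorem~\ref{th:kappa}; both are available, so the argument should go through, but the compatibility of torsion and localisation functors under such a change of rings is the step to write out with care. A secondary point is ensuring that the reduction to the local ring $R_\fp$ interacts correctly with $\supp_S$; this is handled by Lemma~\ref{le:reduction} and the fact that $(X_\fp)_{k(\fp)} = X_{k(\fp)}$, together with the observation that $\gam_\fp X = \gam_\fp(X_\fp)$ used already in the proof of Theorem~\ref{th:lg-kproj-ha}.
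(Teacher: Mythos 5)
Your plan is essentially correct, but it takes a genuinely different route from the paper's proof in the crucial middle step. The common parts are: the reduction to the local case via Lemma~\ref{le:reduction}, and the translation between $\supp_{S(\fm)}$ and $\supp_S$ along the finite chain $S\to S\otimes_R k\xra{\kappa_\fm}S(\fm)$ using Theorem~\ref{th:kappa} together with \cite[Corollary~7.8(1)]{Benson/Iyengar/Krause:2012b}. Where you diverge is in relating $\supp_S\pi(X)$ over $\fm$ to $\supp_S X$ over $\fm$: the paper runs the chain $\supp_S \pi(X)=\supp_S(\pi_r\pi X)=\supp_S(\gam_\fm X)=\supp_S X\cap V(\fm S)$, using Lemma~\ref{le:supp-adjoint}, Lemma~\ref{le:lg-local-case}, and \cite[Theorem~5.6]{Benson/Iyengar/Krause:2008a}, whereas you argue prime-by-prime: $\gam_{\mathfrak r}\pi(X)\cong\pi(\gam_{\mathfrak r}X)$ by \cite[Proposition~7.2]{Benson/Iyengar/Krause:2012b} and then $\gam_{\mathfrak r}X\ne 0\iff\pi(\gam_{\mathfrak r}X)\ne 0$ by fibrewise detection (Proposition~\ref{pr:conserve}). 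This cleanly bypasses Lemma~\ref{le:lg-local-case} (which the paper needs anyway for Theorem~\ref{th:lg-kproj-ha}, so it gets it for free), at the cost of having to verify carefully that an $\mathfrak r$-torsion object, for $\mathfrak r$ over $\fm$, has vanishing fibres $(\gam_{\mathfrak r}X)_{k(\fq)}$ at every $\fq\ne\fm$ in $\Spec R_\fp$; this follows from $\fm S\subseteq\mathfrak r$ and the compatibility of $R$- and $S$-supports from Section~\ref{se:lch}, but is a step you should write out rather than gesture at. One slip in the write-up: after localising, \emph{every} object is $\fm$-local, so your first justification (``$\pi_\fm$ alone is conservative on $\fm$-local objects'') is not a reason; the parenthetical alternative---that $\gam_{\mathfrak r}X$ is $\fm$-torsion over $R$---is the correct one and should be promoted to the main argument.
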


\begin{proof}
  By Lemma~\ref{le:reduction}, we can  assume $(R,\fm,k)$ is a local ring and
  $\fp=\fm$, the maximal ideal of $R$. Set
  $S_k\colonequals S\otimes_Rk =S/\fm S$. Via the map
  $\kappa_\fm\colon S_k\to S(\fm)$ the $S(\fm)$-action on
  $\KProj{G_k}$ induces an $S_k$-action. Applying
  \cite[Corollary~7.8(1)]{Benson/Iyengar/Krause:2012b} to the identity
  functor on $\KProj {G_k}$ yields the first equality below
\begin{align*}
  \kappa^a_{\fm}(\supp_{S(\fm)}X_k)
  &= \supp_{S_k} X_k \\
  &= \supp_{S} X_k \\
	&=\supp_{S}(\pi_r\pi X) \\
	&=\supp_{S}(\gam_\fm X) \\
	&= \supp_{S}(X)\cap V(\fm S) \\
	&=\supp_{S}(X)\cap (\eta^a)^{-1}(\fm)\,.
\end{align*}
The same observation applied to the map $S\to S_k$ yields the second
equality.  Now consider the adjunction \eqref{eq:adjunction}.  Then
Lemma~\ref{le:supp-adjoint} gives the third equality, and the next one
follows from Lemma~\ref{le:lg-local-case}. Now observe that
$\gam_\fm=\gam_{V(\fm)}=\gam_{V(\fm S)}$ by
\cite[Proposition~7.5]{Benson/Iyengar/Krause:2012b}. Then
\cite[Theorem~5.6]{Benson/Iyengar/Krause:2008a} yields the fifth
equality, and the last one is clear since
$V(\fm S)= (\eta^a)^{-1}(\fm)$.
\end{proof}

\begin{proof}[Proof of Theorem~\ref{th:stratification}]
The main task is to verify that when $X,Y$ are objects in $\Rep(G, R)$ with $\supp_SX\subseteq \supp_SY$, one has $X\in \Loc^{\otimes}(Y)$. By Theorem~\ref{th:kappa}, for each $\fp$ in $\Spec R$ the map $\kappa_\fp^a$ is a bijection, so  Lemma~\ref{le:support-fibre} yields an  inclusion
\[
\supp_{S(\fp)}\fibre X{\fp}\subseteq \supp_{S(\fp)}\fibre Y{\fp}\,.
\]
Since $\fibre G{\fp}$ is a finite flat group scheme algebra over $k(\fp)$, the triangulated category $\KProj{G_{k(\fp)}}$
is stratified by the action of it cohomology algebra, $S(\fp)$; this
is the main result of
\cite{Benson/Iyengar/Krause/Pevtsova:2018a}. Thus the inclusion above
implies
\[
\fibre X{\fp}\in \Loc^{\otimes}(\fibre Y{\fp})\,.
\]
This holds for each $\fp$ in $\Spec R$, so we can apply
Theorem~\ref{th:lg-kproj-ha} to deduce that $X$ is in
$\Loc^{\otimes}(Y)$ as desired.

It remains to observe that $\supp_S \one= \Spec S$. This follows from
Lemma~\ref{le:support-fibre}, for $\fibre {\one}{\fp}$ is the unit of
$\KProj {G_{k(\fp)}}$ and its support is $\Spec S(\fp)$.

A standard argument---see,~\cite[Theorem 6.1]{Benson/Iyengar/Krause:2011a}---yields the last part of the statement, concerning the Balmer spectrum of $\rep(G,R)$, for this identifies with the category of compact objects in $\Rep(G,R)$.
\end{proof}

\section{Costratification and other applications}
\label{se:costratification}

The next result establishes costratification for $\Rep(G, R)$ in the sense of \cite{Benson/Iyengar/Krause:2012b}. Together with Theorem~\ref{th:stratification}, this completes the proof of Theorem~\ref{ithm:main}.

\begin{theorem}
\label{th:costratification}
Let $R$ be a noetherian commutative ring and $G$ a finite flat group scheme over $R$. The tensor triangulated category $\Rep(G, R)$ is costratified by the action of $\Ext^*_{G}(R,R)$.
\end{theorem}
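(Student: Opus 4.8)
The plan is to derive Theorem~\ref{th:costratification} formally from ingredients already in place: the stratification of $\Rep(G,R)$ (Theorem~\ref{th:stratification}), the fibrewise detection of Proposition~\ref{pr:conserve}, the van der Kallen finiteness theorems recorded in~\ref{ch:vanderkallen}, and costratification of the fibres, all fed into the abstract bootstrap theorem of~\cite{Barthel/Castellana/Heard/Sanders}. First I would recall from~\cite{Benson/Iyengar/Krause:2012b} the notions of cosupport $\operatorname{cosupp}_S$, of colocalising tensor ideals, and of a rigidly-compactly generated tensor triangulated category being costratified by a graded-commutative noetherian ring; and I would observe that, $S\colonequals\Ext^*_G(R,R)$ being noetherian by~\ref{ch:vanderkallen}, the colocal-to-global principle for cosupport is available, so that costratification of $\Rep(G,R)$ reduces to showing that the local category $\gam_\fp\Rep(G,R)$ admits no proper nonzero colocalising subcategory for each $\fp$ in $\supp_S\one=\Spec S$.

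The substance is then to check the hypotheses of the bootstrap theorem. These are, in our situation: (i) $\Rep(G,R)$ is a rigidly-compactly generated tensor triangulated category (Proposition~\ref{pr:rigidity}) carrying the tautological central action of the noetherian ring $S$ via $\End^*(\bfi R)\cong H^*(G,R)$, and it is stratified by this action, with full support, by Theorem~\ref{th:stratification}; (ii) for each $\fp\in\Spec R$ the base-change functor $\pi_\fp\colon\Rep(G,R)\to\Rep(G_{k(\fp)},k(\fp))$ is an $S$-linear tensor functor (along $\kappa_\fp$) preserving coproducts and compact objects, and its target $\Rep(G_{k(\fp)},k(\fp))=\bfK(\Prj G_{k(\fp)})$ is costratified by its cohomology algebra $S(\fp)$ --- this is the field case, due to~\cite{Benson/Iyengar/Krause:2012b} for a finite group and, in the same manner, to~\cite{Benson/Iyengar/Krause/Pevtsova:2018a} for a finite flat group scheme over a field; (iii) the family $\{\pi_\fp\}_{\fp\in\Spec R}$ is jointly conservative, by Proposition~\ref{pr:conserve}. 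Since the homeomorphisms of Theorem~\ref{th:kappa} identify $\Spec S$ with $\bigsqcup_{\fp\in\Spec R}\Spec S(\fp)$ --- compatibly with cosupport through $\kappa_\fp$, which is the cosupport counterpart of Lemma~\ref{le:support-fibre} --- the bootstrap theorem applies and yields that $\Rep(G,R)$ is costratified by $S$. Combined with Theorem~\ref{th:stratification} this completes the proof of Theorem~\ref{ithm:main}, and the classification of colocalising tensor ideals together with the consequences announced in the introduction then follow from~\cite{Benson/Iyengar/Krause:2011a,Benson/Iyengar/Krause:2012b}.

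The step I expect to be the real obstacle --- and the reason an abstract bootstrap result is invoked rather than a mechanical dualisation of the proof of Theorem~\ref{th:lg-kproj-ha} --- is the transfer of this cominimality from a fibre to $\gam_\fp\Rep(G,R)$. Colocalising subcategories are closed under products, whereas the tensor, left-adjoint base-change functor $\pi_\fp$ interacts well with coproducts but not with products; hence the fibrewise argument for localising ideals does not dualise verbatim, and one must bring in the right adjoint of $\pi_\fp$ and the way $\gam_\fp$ commutes with it. This is precisely what the bootstrap theorem of~\cite{Barthel/Castellana/Heard/Sanders} is designed to encapsulate, so that after the preparations in Sections~\ref{se:Frobenius}--\ref{se:stratification} and the van der Kallen finiteness results, verifying its hypotheses becomes essentially bookkeeping.
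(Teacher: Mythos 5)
Your proposal matches the paper's own proof almost exactly: both invoke the abstract bootstrap theorem of~\cite{Barthel/Castellana/Heard/Sanders}, verifying (i) stratification of $\Rep(G,R)$ via Theorem~\ref{th:stratification}, (ii) joint conservativity of the fibre functors $\pi_\fp$ via Proposition~\ref{pr:conserve} (which supplies the required surjectivity of the maps on Balmer spectra, with Theorem~\ref{th:kappa} providing the identification of $\Spec S$ with the fibrewise spectra), and (iii) costratification of the fibres $\Rep(G_{k(\fp)},k(\fp))$. One small inaccuracy: you cite~\cite{Benson/Iyengar/Krause/Pevtsova:2018a} for costratification of finite flat group schemes over a field, but that paper establishes \emph{stratification}; for costratification at the fibres the paper simply appeals to~\cite{Benson/Iyengar/Krause:2012b}.
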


\begin{proof}
We again set $S \colonequals \Ext^*_{G}(R,R)$. We verify the conditions of the bootstrap theorem \cite[Theorem~17.2]{Barthel/Castellana/Heard/Sanders}, that is to say that the collection of functors 
\[
\pi_{\fp}\colon \Rep(G,R)  \longrightarrow \Rep(G_{k(\fp)}, k(\fp)), %=\bfK(\Prj G_{k(\fp)}),
\]
indexed on $\fp \in \Spec R$, has the following properties:
    \begin{enumerate}
        \item the images of the maps $(\varphi_{\mathfrak p} \coloneqq \Spc(\pi_{\fp}))_{\fp \in \Spec R}$ cover $\Spc(\rep(G,R))$;
        \item $\Rep(G,R)$ is stratified by the action of $S$; and
        \item the categories $\Rep(G_{k(\fp)},k(\fp))$ are costratified for all $\fp \in \Spec R$.
    \end{enumerate}
Then $\Rep(G,R)$ is costratified over $\Spc(\rep(G,R))$, and hence is also costratified by the action of $S$ in light of the homeomorphism $\Spc(\rep(G,R)) \cong \Spec S$.

Condition (2) holds by Theorem~\ref{th:stratification}, while the costratification of $\Rep(G_{k(\fp)},k(\fp))$ has been established in \cite{Benson/Iyengar/Krause:2012b}. Appealing to \cite[Theorem 1.3]{Barthel/Castellana/Heard/Sanders:surj}, Condition (1) follows from Proposition~\ref{pr:conserve}. Here is a direct argument for (1). Indeed, consider the commutative diagram
\[
\resizebox{\columnwidth}{!}{$\displaystyle
\begin{tikzcd}[ampersand replacement=\&]
	{\Spc(\rep(G_{k(\fp)},k(\fp)))} \&\& {\Spc(\rep(G,R))} \\
	{\Spec \Ext_{\fibre    G\fp}^*(k(\fp),k(\fp))} \& {\Spec S \otimes_R k(\fp)} \& \Spec S \\
	\& {\Spec k(\fp)} \& {\Spec R,} 
	\arrow["{\varphi_{\mathfrak p}}", from=1-1, to=1-3]
	\arrow["\rho"', "\sim" labl, from=1-1, to=2-1]
	\arrow["{\Spec \kappa_\fp}"', "\sim", from=2-1, to=2-2]
	\arrow[from=2-2, to=2-3]
	\arrow["\rho"', "\sim" labl, from=1-3, to=2-3]
	\arrow[from=2-2, to=3-2]
	\arrow[from=3-2, to=3-3]
	\arrow[from=2-3, to=3-3]
\end{tikzcd}$}
\]
in which the maps $\rho$ are Balmer's comparison maps from \cite{Balmer:2010b}, the bottom right vertical map is induced by the degree $0$ inclusion $R \to S$, and the horizontal maps are the ones induced by base change. By construction, the right lower square is a pullback in topological spaces. Theorem~\ref{th:stratification} shows that the maps $\rho$ are homeomorphisms; moreover, $\Spec \kappa_\fp$ is a homeomorphism, by Theorem~\ref{th:kappa}. For $\mathfrak p$ varying through the points of $\Spec R$, this implies that the images of $\varphi_{\mathfrak p}$ jointly cover $\Spc(\rep(G,R))$, as desired. 
\end{proof}

\begin{chunk}
It is possible to give a proof of this theorem by transcribing the argument from \cite{Barthel/Castellana/Heard/Sanders} to the theory of costratification as developed in \cite{Benson/Iyengar/Krause:2012b}.
\end{chunk}

\subsection*{Applications}
We record some consequences of the Theorems~\ref{th:stratification} and \ref{th:costratification}. 
The one below is by \cite[Corollary~9.9]{Benson/Iyengar/Krause:2012b}.

\begin{corollary}
\label{ico:locandcoloc}
Let $R$ and $G$ be as before. The map sending a localising subcategory $\mathsf{S}$ of $\Rep(G, R)$ to its right orthogonal
$\mathsf{S}^\perp$ induces a bijection
\[ 
\left\{\begin{gathered} \text{tensor closed
  localising}\\ \text{subcategories of $\Rep(G, R)$}
\end{gathered}\;
\right\} \longiso \left\{
\begin{gathered}
  \text{Hom closed colocalising}\\ \text{subcategories of
    $\Rep(G, R)$}
\end{gathered}
\right\}\,.
\]
The inverse map sends a colocalising subcategory $\mathsf{S}$ to its left orthogonal
$^\perp\mathsf{S}$.  \qed
\end{corollary}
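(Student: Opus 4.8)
The plan is to deduce this directly from the general machinery of \cite{Benson/Iyengar/Krause:2012b}, all of whose hypotheses are now in place. First I would record the structural facts: $\Rep(G,R)$ is a rigidly-compactly generated tensor triangulated category with unit $\one=\bfi R$ by Proposition~\ref{pr:rigidity}, and the graded-commutative ring $S\colonequals\Ext^*_G(R,R)$ is noetherian by van der Kallen's theorem~\ref{ch:vanderkallen}(2) and acts centrally on $\Rep(G,R)$ through $S\to\Hom^*_{\cat T}(\one,\one)$. Thus the setup of \cite[Section~9]{Benson/Iyengar/Krause:2012b} applies verbatim.

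Next I would invoke the two theorems just proved: $\Rep(G,R)$ is stratified by the action of $S$ (Theorem~\ref{th:stratification}) and costratified by the action of $S$ (Theorem~\ref{th:costratification}). Under precisely these two hypotheses, \cite[Corollary~9.9]{Benson/Iyengar/Krause:2012b} states that the assignment $\mathsf{S}\mapsto\mathsf{S}^\perp$ is an order-reversing bijection from the tensor closed localising subcategories of $\Rep(G,R)$ onto the Hom closed colocalising subcategories, with inverse $\mathsf{S}\mapsto{}^\perp\mathsf{S}$. This is exactly the assertion of the corollary.

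The one point I would take care to spell out—and essentially the only content beyond citation—is \emph{why} the correspondence restricts to the classes named in the statement: a localising subcategory need not be tensor closed, and the orthogonal of a localising subcategory need not be Hom closed. What forces matters is that stratification identifies the tensor closed localising subcategories with subsets of $\Spec S$ via support, costratification identifies the Hom closed colocalising subcategories with subsets of $\Spec S$ via cosupport, and under both identifications $\mathsf{S}$ and $\mathsf{S}^\perp$ correspond to the same subset; this is precisely the bookkeeping carried out in \cite[Corollary~9.9]{Benson/Iyengar/Krause:2012b}. Since the substantive work has already been done in establishing stratification and costratification, I do not expect any genuine obstacle here—the remaining task is entirely one of matching our notation and conventions with those of the cited reference.
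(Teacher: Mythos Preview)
Your proposal is correct and matches the paper's approach exactly: the paper simply records that the corollary follows from \cite[Corollary~9.9]{Benson/Iyengar/Krause:2012b}, given that stratification and costratification have been established in Theorems~\ref{th:stratification} and~\ref{th:costratification}. Your additional unpacking of why the correspondence restricts to the named classes is fine but not required---the cited reference handles this bookkeeping.
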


One gets also (co)stratification of the big stable category of lattices.

\begin{corollary}
\label{co:stmod}
\pushQED{\qed} 
The tensor triangulated category $\StlfMod (G,R)$ is stratified and costratified by the action of $S$, and its support equals $\Prj S$. Consequently
\[
\Spc(\stmod(G,R)) \longiso \Prj H^*(G,R). %\qedhere
\]
\end{corollary}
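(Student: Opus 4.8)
The plan is to realise $\StlfMod(G,R)$ as a finite (smashing) localisation of the tensor triangulated category $\Rep(G,R)$ and then transport the conclusions of Theorems~\ref{th:stratification} and~\ref{th:costratification} along it. First I would note that $\StlfMod(G,R)$ is a tensor-ideal localising subcategory of $\Rep(G,R)$: by Corollary~\ref{co:recollement} the recollement identifies it with the kernel of $\bfq\colon \Rep(G,R)\to\dcat{G}$, namely the acyclic complexes; this kernel is localising because the inclusion has a right adjoint, and it is a tensor ideal because $\bfq$ is monoidal (on complexes of projectives that are $K$-projective over $R$ the product $\otimes_R$ already computes $\lotimes_R$). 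Since $\Rep(G,R)$ is stratified by $S\colonequals\Ext^*_G(R,R)$, a tensor-ideal localising subcategory is determined by its support — this is immediate from the definition of stratification together with the local-to-global principle of Theorem~\ref{th:local-to-global} — so the entire statement reduces to showing that $\supp_S\StlfMod(G,R)=\Prj S$, equivalently that $\StlfMod(G,R)=L_V\Rep(G,R)$ for the specialisation closed set $V\colonequals V(S^{\geqslant 1})$, where $S^{\geqslant 1}$ is the irrelevant ideal.

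Next I would carry out this support computation fibrewise. For the upper bound, fix $X$ in $\StlfMod(G,R)$ and $\fp\in\Spec R$; as in the proof of Lemma~\ref{le:lg-local-case} the fibre functor is a tensor functor compatible with $\bfq$, so $\bfq_{G_{k(\fp)}}\pi_\fp X\cong(\bfq X)\lotimes_R k(\fp)=0$ and hence $\pi_\fp X\in\Ker\bfq_{G_{k(\fp)}}=\StMod(G_{k(\fp)},k(\fp))$. By stratification of the stable category of a finite group scheme over a field \cite{Benson/Iyengar/Krause/Pevtsova:2018a} we get $\supp_{S(\fp)}\pi_\fp X\subseteq\Prj S(\fp)$, and since $\kappa_\fp$ is the identity in degree $0$ while $\Spec\kappa_\fp$ is a homeomorphism (Theorem~\ref{th:kappa}), it matches the unique closed points of these connected graded $k(\fp)$-algebras and therefore restricts to a bijection $\Prj S(\fp)\longiso\Prj S\cap(\eta^a)^{-1}(\fp)$; now Lemma~\ref{le:support-fibre} gives $\supp_SX\cap(\eta^a)^{-1}(\fp)\subseteq\Prj S$, and taking the union over $\fp$ yields $\supp_SX\subseteq\Prj S$. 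For the reverse inclusion I would apply $\pi_\fp$ to the complete resolution $\bft R$ of the trivial $G$-module: being exact and compatible with both $\bfq$ and $\bfp$, the functor $\pi_\fp$ commutes with $\bft$ and sends $\bfi R$ to $\bfi_{G_{k(\fp)}}k(\fp)$, so $\pi_\fp(\bft R)$ is the complete resolution of $k(\fp)$, i.e.\ the tensor unit of $\StMod(G_{k(\fp)},k(\fp))$, whose support over $S(\fp)$ is all of $\Prj S(\fp)$; via Lemma~\ref{le:support-fibre} and Theorem~\ref{th:kappa} this forces $\Prj S\subseteq\supp_S(\bft R)$. Combining the two bounds, $\supp_S\StlfMod(G,R)=\Prj S$, hence $\StlfMod(G,R)=\{X\mid\supp_SX\subseteq\Prj S\}=L_V\Rep(G,R)$.

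It then remains to conclude. As $V$ is specialisation closed, $L_V$ is a finite localisation, so $L_V\Rep(G,R)=\StlfMod(G,R)$ is again rigidly-compactly generated, with unit $L_V\bfi R\cong\bft R$ and with $\StlfMod(G,R)^c$ a Verdier quotient of $\Rep(G,R)^c=\rep(G,R)$. Stratification of $\StlfMod(G,R)$ by $S$ with support $\Prj S$ is now immediate from the classification of tensor-ideal localising subcategories given by Theorem~\ref{th:stratification}, and costratification follows from Theorem~\ref{th:costratification} by the descent of costratification along finite localisations \cite{Benson/Iyengar/Krause:2011a,Benson/Iyengar/Krause:2012b} (alternatively, one may feed the fibre functors $\pi_\fp\colon\StlfMod(G,R)\to\StMod(G_{k(\fp)},k(\fp))$ into the bootstrap theorem exactly as in the proof of Theorem~\ref{th:costratification}). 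For the Balmer spectrum, Proposition~\ref{pr:StMod} identifies $\StlfMod(G,R)^c$ with the idempotent completion of $\stmod(G,R)$, and passage to the idempotent completion does not change $\Spc$; since $\StlfMod(G,R)$ is stratified, its Balmer spectrum of compacts is homeomorphic to the support of the unit \cite[Theorem~6.1]{Benson/Iyengar/Krause:2011a}, which here is $\Prj S=\Prj H^*(G,R)$. The hard part of this argument is the fibrewise support computation — in particular verifying that the functors $\pi_\fp$ intertwine the Bousfield localisations $\bft$ and send $\bft R$ to the stable tensor unit, and the bookkeeping that $\Spec\kappa_\fp$ matches the projective loci; once $\StlfMod(G,R)=L_V\Rep(G,R)$ is established, everything else is formal.
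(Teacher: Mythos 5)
Your argument is correct and follows the paper's own sketch. The published proof is two sentences, offering two routes: (i) repeat the bootstrap of Theorem~\ref{th:costratification} directly, or (ii) use Corollary~\ref{co:recollement} to see $\Rep(G,R)\to\StlfMod(G,R)$ as a finite localisation inducing $\Prj S\hookrightarrow\Spec S$. You take route (ii) as the organising frame but then establish its key unstated ingredient --- that this localisation really does correspond to the specialisation closed set $V(S^{\geqslant 1})$, equivalently that $\supp_S\StlfMod(G,R)=\Prj S$ --- by a fibrewise computation in the spirit of route (i), using Lemma~\ref{le:support-fibre} and Theorem~\ref{th:kappa} to transport the known equality $\supp_{S(\fp)}\StMod(G_{k(\fp)},k(\fp))=\Prj S(\fp)$ up through the fibres. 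That is a perfectly valid way to fill the gap the paper leaves implicit; the shorter alternative the paper has in mind is to note that the localising ideal being killed is generated by the perfect complexes $\bfD^{\mathrm{perf}}(G)$, and since $H^*(G,X)$ is a bounded (hence $S^{\geqslant 1}$-torsion) $S$-module for every perfect $X$, that ideal is supported precisely on $V(S^{\geqslant 1})$. Both yield the same conclusion; yours is more explicit and self-contained, at the cost of length. The remaining steps --- identifying the intrinsic tensor structure of $\StlfMod(G,R)$ with the localised one, transporting (co)stratification along the finite localisation, and using Proposition~\ref{pr:StMod} plus invariance of $\Spc$ under idempotent completion for the Balmer spectrum --- are all handled correctly.
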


\begin{proof}
One can prove this along the lines of Theorem~\ref{th:costratification}. It can also be deduced from Corollary~\ref{co:recollement}, which implies that the canonical functor $\lfK(\Prj G,R) \to \StlfMod(G,R)$ is a finite localisation inducing the inclusion $\Prj S \to \Spec S$.
\end{proof}

Given an object $M$ in $\lfK(\Prj G,R)$ we write $\supp_GM$ and $\mathrm{cosupp}_GM$ for its support and cosupport, respectively, computed using the action of $H^*(G,R)$ on $\lfK(\Prj G,R)$; see Section~\ref{se:lch}.  Here is a noteworthy consequence of the fact that $\lfK(\Prj G,R)$ is stratified by the action of $H^*(G,R)$: For $M,N$ in $\lfK(\Prj G,R)$ one has equalities
\begin{equation}
\begin{split}
    \supp_G(M\otimes_RN)&= \supp_G M\cap \supp_G N \\
    \mathrm{cosupp}_G\fHom(M,N) &= \supp_G M\cap\mathrm{cosupp}_G N\,.
\end{split}    
\end{equation}
The formula for the support of the tensor product can be deduced from Corollary~\ref{co:stmod} as in the proof of \cite[Theorem~11.1]{Benson/Iyengar/Krause:2011b}. The cosupport of the function objects follows from \cite[Theorem~9.5]{Benson/Iyengar/Krause:2012b}. 

 Given a subgroup scheme $H\leq G$, restriction induces a map of graded $R$-algebras $H^*(G,R)\to H^*(H,R)$; let  $\mathrm{res}^*$ denote the induced map on spectra. For any $M$ in $\lfK(\Prj G,R)$ there are equalities
\begin{equation}
\begin{split}
\supp_H(M\downarrow_H) &= (\mathrm{res}^*_{G,H})^{-1}(\supp_GM) \\
\mathrm{cosupp}_H(M\downarrow_H) &= (\mathrm{res}^*_{G,H})^{-1}(\mathrm{cosupp}_GM)\,.
\end{split}
\end{equation}

There are further applications, including a proof of the telescope conjecture for $\lfK(\Prj G,R)$ and $\StlfMod(G,R)$, and versions of Theorems~\ref{th:stratification} and \ref{th:costratification} for the Frobenius category $\lfMod(G,R)$. The statements and proofs are analagous to those in \cite{Benson/Iyengar/Krause:2011a,Benson/Iyengar/Krause:2012b}, which deal with the case when $R$ is a field.

\section{An example}
\label{se:example}
In this section we give an example, promised in Section~\ref{se:Frobenius} of a group $G$ and a noetherian commutative ring $R$ with the property that the category $\lfMod(G,R)$ is properly contained in $\Mod(G,R)$.

Let $k$ be a field of characteristic $2$, set $R\colonequals k[t]/(t^2)$ and $G\colonequals \bbZ/2$, the elementary abelian $2$-group of rank one. Setting $x\colonequals g-1$, where $g$ is the generator of $G$, one gets an isomorphism 
\[
RG \cong \frac{k[t,x]}{(t^2,x^2)}\,,
\]
which identifies with the group algebra over $k$ of $E\colonequals (\bbZ/2)^2$, the Klein $4$-group.  The cohomology algebra is
\[
H\colonequals H^*(E,k)\cong k[t^*,x^*]\,,
\]
where the basis $t^*,x^*$ of $H^1$ is dual to the basis $t,x$ of $(t,x)/(t,x)^2$. One has
\[
\lfMod(G,R) \subseteq   \Mod(G,R) \subseteq \Mod(E,k)\,.
\]
This induces full and faithful inclusions
\begin{equation}
\label{eq:example-klein}
\StlfMod(G,R) \subseteq   \StMod(G,R) \subseteq \StMod(E,k)\,.
\end{equation}

Here is a key observation about these categories.

\begin{lemma}
\label{le:example-klein}
Both $\StMod(G,R)$  and $\StlfMod(G,R)$ are localising subcategories of $\StMod(E,k)$, with
\begin{align*}
\supp_H \StMod(G,R) &= \Prj H\setminus\{(t^*)\}\\
\supp_H \StlfMod(G,R) &= \Prj H \setminus\{(0), (t^*)\}\,.
\end{align*}
\end{lemma}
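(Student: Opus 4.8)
The argument splits into two parts: showing that the two subcategories are localising, and then computing their supports. For the first, the plan is to note that, inside $\Mod(E,k)$, the subcategory $\Mod(G,R)$ is precisely the class of $kE$-modules that are free over $R=k[t]/(t^2)$ — using that $R$ is local, so projective $R$-modules are free. Since $R$ is moreover self-injective, this class is closed under arbitrary coproducts and direct summands and satisfies the two-out-of-three property for short exact sequences of $kE$-modules (at each step a splitting over $R$ is available). As the projective $kE$-modules are free over $R$, passage to the stable category identifies $\StMod(G,R)$ with a triangulated, coproduct-closed --- hence localising --- subcategory of $\StMod(E,k)$. For $\StlfMod(G,R)=\St(\lfMod(G,R))$: by construction this is the localising subcategory of $\StMod(G,R)$ generated by $\stmod(G,R)$, and a localising subcategory of a localising subcategory of $\StMod(E,k)$ is again localising.

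For the supports, the plan is to exploit that $\StMod(E,k)$ is stratified by $H=H^*(E,k)$ \cite{Benson/Iyengar/Krause:2011a}, so a localising subcategory $\mathsf S$ is recovered from $\supp_H\mathsf S=\bigcup_{M\in\mathsf S}\supp_H M$. Write $E=\langle g\rangle\times\langle h\rangle$ with $g$ a generator of $G$ and $h-1=t$, so that $R=k\langle h\rangle$ sits in $kE$ as a subalgebra and ``$M$ projective over $R$'' becomes ``$M\downarrow_{\langle h\rangle}$ is free''; let $\fp_0=(t^*)\in\Prj H$ be the point attached to the subgroup $\langle h\rangle$. The key step is the dictionary, valid for every $M$ in $\Mod(E,k)$ and not merely for finitely generated ones: $M\downarrow_{\langle h\rangle}$ is free if and only if $\fp_0\notin\supp_H M$. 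Granting this, $\StMod(G,R)$ identifies with the localising subcategory $\{M\mid\fp_0\notin\supp_H M\}$; since for every $\fp\ne\fp_0$ the object $\gam_{\fp}\one$ lies in this subcategory and has support $\{\fp\}$, we conclude $\supp_H\StMod(G,R)=\Prj H\setminus\{(t^*)\}$.

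For $\StlfMod(G,R)=\Loc(\stmod(G,R))$ the plan is to compute $\bigcup_{M\in\stmod(G,R)}\supp_H M$ directly. Any such $M$ is finite-dimensional and free over $R$, hence $\fp_0\notin\supp_H M$ by the dictionary, and $\supp_H M$ is a proper closed subset of $\Prj H\cong\mathbb{P}^1_k$, so a finite set of closed points not containing the generic point $(0)$. Conversely, each closed point $\fp\ne(t^*)$ equals $V(\zeta)$ for a linear form $\zeta\in H^1$, and the Carlson module $L_\zeta$ is finite-dimensional, free over $R$ (as $\fp_0\ne\fp$), and has support $\{\fp\}$; so it lies in $\stmod(G,R)$. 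This yields $\supp_H\StlfMod(G,R)=\Prj H\setminus\{(0),(t^*)\}$.

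The main obstacle is the dictionary for arbitrary modules. Its ``only if'' direction is the classical Dade lemma applied to finitely generated modules together with a filtered-colimit argument, but the assertion that an $R$-free module cannot carry $(t^*)$ in its cohomological support --- which is what pins $\supp_H\StMod(G,R)$ down exactly --- needs the stratification machinery for $\StMod(E,k)$, specifically that the stratum $\gam_{(t^*)}\StMod(E,k)$ is faithfully detected by restriction to $\langle h\rangle$. This same point explains why the two supports differ: the finite-dimensional modules in $\Mod(G,R)$ are forced to have complexity at most one, hence miss the generic point, whereas the localising subcategory they generate --- let alone all of $\StMod(G,R)$ --- reaches it.
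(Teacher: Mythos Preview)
Your argument is correct and follows the paper's approach, though with considerably more detail filled in: the paper's proof is extremely terse, simply asserting the dictionary ``$M$ is projective over $R$ if and only if $(t^*)\notin\supp_H M$'' for arbitrary $kE$-modules and then invoking closedness of support for finite-dimensional modules, whereas you supply the localising property explicitly (via the two-out-of-three argument using self-injectivity of $R$) and produce Carlson modules to witness the reverse inclusion in the second equality, both of which the paper leaves implicit.

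Two small corrections to your commentary. Your final sentence asserts that ``the localising subcategory they generate \ldots\ reaches [the generic point]'', which flatly contradicts what you have just proved: $\StlfMod(G,R)$ misses $(0)$. Presumably this is a slip and you meant the opposite---that $\StlfMod(G,R)$ still misses it while the larger $\StMod(G,R)$ does reach it. Second, the filtered-colimit reduction you sketch for one half of the dictionary does not actually work, since BIK support does not commute with filtered colimits; both directions of the dictionary for infinite-dimensional modules rest on the identification of cohomological support with rank (or $\pi$-)support, i.e., on Benson--Carlson--Rickard for infinitely generated modules or equivalently the BIK restriction formula $\supp_{\langle h\rangle}(M{\downarrow})=(\mathrm{res}^*)^{-1}(\supp_E M)$. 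You are right that this is the one non-trivial input; the paper simply takes it as known.
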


\begin{proof}
A $kE$-module $M$ is in $\Mod(G,R)$  if and only if  it is projective when restricted to $R$; equivalently, its support does not contain $(t^*)$. This justifies the claim about the support of $\StMod(G,R)$. 

The support of any  finite dimensional $kE$-module $M$ is closed, so $M$ is in $\rmod(G,R)$ if and only if the subset $\{(0),(t^*)\}$ is not in its support. Thus this subset is not in the support of any module that is in the localising subcategory of $\Mod(G,R)$ generated by  $\rmod(G,R)$. This justifies the second equality.
\end{proof}

It follows from the result above that the \emph{generic module} in $\StMod(E,k)$, that is to say, the $kE$-module $\gam_{(0)}k$ is in $\StMod(G,R)$ but not in $\StlfMod(G,R)$.

The inclusions in \eqref{eq:example-klein} do not respect the tensor triangular structures, for the product on the two categories on the left is $-\otimes_R-$ whereas for that on the right is $-\otimes_k-$. This is an important point, and is discussed further below. 

\subsection*{Thick and localising ideals}
We continue with a description of thick and localising ideals of the stable categories $\StlfMod(G,R)$ and $\StMod(G,R)$. 

The ideal $(t)\subset RG$ is isomorphic to $R$ as an $RG$-module, so the exact sequence 
\[
0\lra (t) \lra RG \lra R\lra 0
\]
defines an element in $H^1(G,R)=\Ext^1_{RG}(R,R)$; denote it $\eta$. Since $H^*(G,R)$ is a commutative $R$-algebra, one gets a morphism of $R$-algebras
\[
R[\eta]\lra H^*(G,R)\,.
\]
A simple computation yields:

\begin{lemma}
The map above is an isomorphism. \qed
\end{lemma}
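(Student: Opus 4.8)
The plan is to compute $H^*(G,R)=\Ext^*_{RG}(R,R)$ explicitly and then to locate $\eta$ inside it. The group algebra $RG=R[g]/(g^2-1)$ satisfies $x^2=0$ for $x=g-1$, and since $k$ has characteristic $2$ the augmentation ideal $xRG$ of $G$ coincides with $\operatorname{Ann}_{RG}(x)$; hence $xRG\cong RG/\operatorname{Ann}_{RG}(x)=R$ as $RG$-modules, so that
\[
\cdots \xra{\ x\ } RG \xra{\ x\ } RG \xra{\ x\ } RG \xra{\ \eps\ } R \lra 0
\]
is a free resolution of the trivial module $R$, periodic with every differential equal to multiplication by $x$. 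Its stupid truncation $0\to\Omega R\to RG\xra{\eps}R\to 0$, with $\Omega R=\Ker(\eps)=xRG$ identified with $R$ in the standard way, is a short exact sequence representing the class $\eta$.

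First I would apply $\Hom_{RG}(-,R)$ to the resolution above. Since $x$ annihilates the trivial module $R$, every induced differential is zero, whence $\Ext^n_{RG}(R,R)\cong\Hom_{RG}(RG,R)\cong R$ for all $n\geq 0$. Inspecting Yoneda products at the chain level is easy here, because all differentials are the single map $x\cdot(-)$ and hence the comparison maps between shifts of the resolution may be taken to be identities: one reads off that $\Ext^*_{RG}(R,R)$ is the polynomial ring $R[\zeta]$ on the degree one class $\zeta\in\Ext^1_{RG}(R,R)\cong R$ corresponding to the canonical free generator $1$. The same computation identifies the class $\eta$ of the displayed extension with a \emph{unit} $c\in R$ under $\Ext^1_{RG}(R,R)\cong R$: it is read off from the isomorphism $\Omega R\cong R$ together with the map $1\mapsto x$ induced by $d_1$, both of which are invertible. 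Thus $\eta=c\zeta$ with $c\in R^\times$.

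Consequently the graded $R$-algebra homomorphism $R[\eta]\to H^*(G,R)$ — which is well defined since $H^*(G,R)$ is graded-commutative and $|\eta|=1$ with $2=0$ in $k$ — sends $\eta$ to $c\zeta$ with $c$ a unit, so it is simply the automorphism of the polynomial ring $R[\zeta]$ that rescales the generator. It is therefore an isomorphism, which is the assertion.

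The step I expect to be the only real obstacle is making the bookkeeping of the second paragraph precise: one must be certain that the leading coefficient $c$ of $\eta$ is a unit of $R$, not merely a nonzero element (if $\eta$ equalled $t\zeta$ the map would fail to be surjective onto $\Ext^{\geq 1}$), and that the multiplication on $\Ext^*_{RG}(R,R)$ is the expected polynomial one. Both are immediate from the explicit periodic resolution; as a cross-check, one can instead use the Künneth isomorphism $\Ext^*_{RG}(R,R)\cong R\otimes_k\Ext^*_{kG}(k,k)=R\otimes_k k[\zeta]$, valid because $R$ is free over $k$, under which the extension defining $\eta$ is obtained from the standard extension $0\to k\to kG\to k\to 0$ by applying $R\otimes_k(-)$.
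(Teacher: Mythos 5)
Your computation is correct and is exactly the ``simple computation'' the paper leaves implicit: the free resolution $\cdots\xrightarrow{x}RG\xrightarrow{x}RG\xrightarrow{\varepsilon}R\to 0$ is periodic of period one, the differentials induced on $\Hom_{RG}(-,R)$ vanish because $x$ annihilates the trivial module, and periodicity lets one lift all shifts by the identity, so $H^*(G,R)\cong R[\zeta]$ with $\zeta$ the canonical degree-one class; the extension class you use is then a unit multiple of $\zeta$, which gives the isomorphism.

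One discrepancy is worth flagging, however. You (correctly) represent the class by the sequence $0\to xRG\to RG\to R\to 0$, identifying the augmentation ideal $(x)$ with $R$. The paper instead displays $0\to(t)\to RG\to R\to 0$ and asserts that $(t)\cong R$ as $RG$-modules. That assertion appears to be a typo for $(x)$: one has $(t)\cong RG/\mathrm{Ann}_{RG}(t)=RG/(t)\cong kG$, on which $t$ acts trivially and $x$ nontrivially, whereas on the trivial module $R=RG/(x)$ it is $x$ that acts trivially and $t$ that acts nontrivially; in particular $RG/(t)\not\cong R$, so the paper's displayed sequence does not exist as written. Your argument therefore proves the lemma for the intended class $\eta$, i.e.\ the class of $0\to(x)\to RG\to R\to 0$, which is what the paper surely means.
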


Thus $\Prj H^*(G,R)$ consists of a single point, in contrast with $\Prj H^*(E,k)=\mathbb{P}^1_k$.  This means that there is a single nonzero thick \emph{ideal} in $\stmod(G,R)$, whereas thick subcategories are aplenty, and parameterised by specialisation closed subsets of $\mathbb{P}^1_k \setminus \{(0)\}$. Here is an example: Assume $k$ is algebraically closed and set
\[
M_{[a,b]}\colonequals RG/(at+bx)\quad \text{for $[a,b]\in \mathbb{P}^1_k$.}
\]
Observe that these modules are in $\rmod(G,R)$, except the one for $[a,b]=[1,0]$. Since there is only a single thick ideal in $\stmod(G,R)$, these modules must tensor-generate each other. Indeed as long as $[a,b]\ne [1,0]$,  as $G$-modules one has
\[
M_{[a,b]}\otimes_R M_{[-a,b]}\cong M_{[0,b]}\cong R\,.
\]

We deduce also that $\StlfMod(G,R)$ has no nontrivial localising ideals. This is another measure of the difference between this category and $\StMod(G,R)$: The modules in the latter are precisely those $kE$-modules not supported on the closed point $V\colonequals \{[1,0]\}$, so  
\[
\StMod(G,R) = L_V \StMod(E,k)\,.
\]
It follows that $\StMod(G,R)$ is also compactly generated, with compact objects generated by  $L_Vk$.  Moreover the minimal localising subcategories of $\StMod(G,R)$ are parameterised by the points of $\mathbb{P}^1_k \setminus V$. This set consists of closed points and the generic point $\{0\}$. The localising subcategories corresponding to the closed points are compactly generated and hence contained in $\StlfMod(G,R)$. Thus there are only two proper localising ideals in $\StMod(G,R)$, namely, $\StlfMod(G,R)$ and the localising subcategory generated by the $G$-module $\gam_{(0)}k$ supported at $(0)$.

A similar analysis can be made of the categories $\lfK(\Prj G, R)$ and $\bfK(\Prj G)$. Now there is one more minimal localising ideal to take into account, namely the $K$-projective complexes in $\bfK(\Prj G)$. 

\begin{chunk}
\label{ch:example-general}
The preceding construction can be generalised as follows to yield Frobenius $R$-algebras $A$ for which $\StlfMod(A,R)$ is smaller than $\StMod(A,R)$. 

Let $G$ be a commutative finite  group scheme defined over a field $k$ of positive characteristic $p$, such that the cohomology ring $H^*(G,k)$ has Krull dimension at least  $2$. Choose a  $\pi$-point 
\[
\alpha\colon R\colonequals k[t]/(t^p)\lra  kG \equalscolon A
\]
such that the corresponding point $[\alpha]$ in $\Prj H^*(G,k)$ is closed and has codimension $\ge 1$. Then $A$ is a Frobenius $R$-algebra and $\StMod(A,R)$ consists of those $G$-modules whose support does not contain $[\alpha]$. Choose a non-closed point $\fp$ in $\Prj H^*(G,k)$ whose closure contains $[\alpha]$; it exists because the codimension of $[\alpha]$ is positive. The module $\gam_{\fp}k$ is then in $\StMod(A,R)$ but not in $\StlfMod(A,R)$. 

The construction above carries over to any finite flat group scheme $G$ and $\pi$-point $\alpha$ whose image is central in $kG$.
\end{chunk}

\bibliographystyle{amsplain}

\newcommand{\noopsort}[1]{}
\providecommand{\bysame}{\leavevmode\hbox to3em{\hrulefill}\thinspace}
\providecommand{\MR}{\relax\ifhmode\unskip\space\fi MR }
\providecommand{\MRhref}[2]{%
  \href{http://www.ams.org/mathscinet-getitem?mr=#1}{#2}
}
\providecommand{\href}[2]{#2}

\end{document}